\newtheorem{assumption}{A -}
\newtheorem{anytheorem}{Theorem}[section]
\newtheorem{theorem}[anytheorem]{Theorem}
\newtheorem{lemma}[anytheorem]{Lemma}
\newtheorem{defn}[anytheorem]{Definition} 
\theoremstyle{definition}
\newtheorem{example}[anytheorem]{Example} 
\newtheorem{remark}[anytheorem]{Remark} 
\numberwithin{equation}{section} 
\title[Stochastic anisotropic $p$-Laplace equation]{On Well-Posedness of Stochastic anisotropic $p$-Laplace Equation driven by L\'evy noise}
\author{Neelima}
\address{
School of Mathematics, University of Edinburgh, United Kingdom
}
\email{N.Neelima@sms.ed.ac.uk}
\date{\today}
\keywords{Anisotropic $p$-Laplace equation, Stochastic partial differential equations, Coercivity, Local Monotonicity, L\'evy noise.}
\subjclass[2010]{60H15, 65M60, 47J35.}
\begin{document}
\maketitle

\begin{abstract}
In this article, well-posedness of  stochastic anisotropic $p$-Laplace equation driven by L\'evy noise is shown.
Such an equation in deterministic setting has been considered by Lions \cite{lions69}. 
The results obtained in this article can be applied to solve a large class of semilinear and quasilinear stochastic partial differential equations.
\end{abstract}

\section {Introduction and Main Result} \label{sec:introduction}
We establish the well-posedness of stochastic anisotropic $p$-Laplace equation driven by L\'evy noise defined by the following equation, 
\begin{equation}                                                 \label{eq:anisotropic}
\begin{split}
du_t  = \sum_{i=1}^d &  D_i\big(|D_iu_t|^{p_i-2}D_iu_t\big) \,dt 
+ \sum_{j=1}^d \zeta_j |D_j u_t|^\frac{p_j}{2}\,dW_t^j + \sum_{j=1}^\infty h_j(u_t)dW_t^j\\
& +\int_{\mathcal{D}^c}\gamma_t(u_t,z)\tilde{N}(dt,dz)+\int_{\mathcal{D}}\gamma_t(u_t,z)N(dt,dz)
\,\,\, \text{on}\,\,\, (0,T)\times \mathscr{D}, 
\end{split}     
\end{equation}
where $u_t = 0$ on boundary of domain $\mathscr{D} \subset \mathbb{R}^d$ and $u_0$ is a given initial condition. Here, for $i\in\{1,2,\ldots,d\}$, $D_i$ denotes the distributional derivative along the $i$-th coordinate in $\mathbb{R}^d$. Further, $p_i \geq 2$ are real numbers, $\zeta_j$ are constants and $W^j$ are independent Wiener processes on a right continuous complete filtered probability space $(\Omega,\mathscr{F},(\mathscr{F}_t)_{t\in [0,T]},\mathbb{P})$. Also, $N(dt,dz)$ is a Poisson random measure defined on a $\sigma$-finite measure space $(Z,\mathscr{Z},\nu)$ with intensity $\nu$ and $\tilde{N}(dt,dz):=N(dt,dz)-\nu(dz)dt$ is the compensated Poisson random measure. Note that the Poisson random measure $N(dt,dz)$ is independent of the Weiner processes $W^j$. Further, $\mathcal{D}\in \mathscr{Z}$ is such that $\nu(\mathcal{D})<\infty$ and $\mathcal{D}^c=Z \setminus \mathcal{D}$. The term anisotropic signifies that the parameter $p$ in the $p$-Laplace operator takes different values in different directions, which is evident from the drift term of \eqref{eq:anisotropic} as $p_i$'s can be different. The precise assumptions on the functions $h_j$ and $\gamma$ are given in Theorem~\ref{thm:aniso_p_Laplace}. 

Solvability of anisotropic $p$-Laplace equation in deterministic setting, i.e. 
\begin{equation} \label{eq:aniso_pde}
du_t  = \sum_{i=1}^d  D_i\big(|D_iu_t|^{p_i-2}D_iu_t\big) \,dt \,\,\, \text{on}\,\,\, (0,T)\times \mathscr{D}, \,\,\, u_t = 0 \text{ on } \partial \mathscr{D}
\end{equation}
has been studied in Lions \cite{lions69}. Note that if $p_i=p$ for all $i$, then a solution to \eqref{eq:aniso_pde} can be found in the Banach space defined by
\[
W_0^{1,p}(\mathscr{D}):=\{u| u, D_iu \in L^p(\mathscr{D}) , \,i=1,2,\ldots,d ;\,\, u=0 \text{ on } \partial \mathscr{D} \} .
\]
By solution we mean a function $u\in L^p((0,T);W_0^{1,p}(\mathscr{D}))$ such that for every $t\in[0,T]$ and  $\phi \in W_0^{1,p}(\mathscr{D})$,
\[
\int_\mathscr{D}u_t(x)\phi(x)dx= \int_\mathscr{D}u_0(x)\phi(x)dx-\sum_{i=1}^d \int_0^t \int_\mathscr{D}|D_iu_s(x)|^{p-2}D_iu_s(x)D_i\phi(x)dxds \, .
\]
The proof of existence of a solution to PDE \eqref{eq:aniso_pde}, with $p_i=p$ for all $i$, uses the coercivity of the operator $\sum_{i=1}^d   D_i\big(|D_iu|^{p-2}D_iu\big)$, which means there exists a constant $\theta>0$, known as coefficient of coercivity, such that
 \[
 -\sum_{i=1}^d \int_\mathscr{D}|D_iu(x)|^{p}dx \leq -\theta  |u|_{W_0^{1,p}}^p \,.
 \]
 However, when $p_i$'s are different, we can not mimic the above argument as we can not find a $p$ and a space $X$ such that
 \[
 -\sum_{i=1}^d \int_\mathscr{D}|D_iu(x)|^{p_i}dx \leq -\theta  |u|_X^p 
 \]
 holds. To tackle this problem, Lions \cite{lions69} considered the anisotropic p-Laplace operator $\sum_{i=1}^d   D_i\big(|D_iu|^{p_i-2}D_iu\big)$ as a sum of $d$ operators $D_i\big(|D_iu|^{p_i-2}D_iu\big), \,i=1,2,\ldots,d$, where each operator satisfies the coercivity condition with different $p_i,\, \theta_i$ and the space $X_i$, let's call it anisotropic coercivity condition. Then from the appropriate energy equality and anisotropic coercivity condition we get the required a priori estimates. The usual compactness and monotonicity arguments lead to existence of a unique solution of \eqref{eq:aniso_pde} in the space $\cap_{i=1}^d L^{p_i}((0,T);W_0^{1,p_i}(\mathscr{D}))$.  
Results obtained by Pardoux in \cite{pardoux75} can be applied to solve anisotropic $p$-Laplace equation driven by Wiener process. In this article, the technique used in \cite{lions69} is extended to cover the case of anisotropic $p$-Laplace equation \eqref{eq:anisotropic} driven by L\'evy noise  and a unique solution is obtained in the space   
\begin{equation} \label{eq:aniso_sobolev_space}
W_0^{1,\mathbf{p}}(\mathscr{D}):= \{u|u\in L^2(\mathscr{D}), D_iu \in L^{p_i}(\mathscr{D}), \, i=1,2,\ldots,d; u=0 \text{ on } \partial \mathscr{D}  \}.
\end{equation}
We now describe the result in detail. 

%
Let $\mathbb{R}^d$ be a $d$-dimensional Euclidean space and
$\mathscr{D}\subseteq \mathbb{R}^d$ be an open bounded domain with smooth
boundary. 
For any $p\geq 1, \,\, L^p(\mathscr{D})$ is the Lebesgue space
of equivalence classes of real valued measurable functions $u$ defined
on $\mathscr{D}$ such that the norm 
\[|u|_{L^p}:=\Big(\int_\mathscr{D}|u(x)|^p dx\Big)^\frac{1}{p}
\] 
is finite.
Further for $p_i\geq 2$, consider the spaces 
\[ 
W^{x_i,p_i}(\mathscr{D}):=\{u| u \in L^2(\mathscr{D}), D_iu \in L^{p_i}(\mathscr{D}) \}.
\] 
It is then easy to check that the space $W^{x_i, p_i}(\mathscr{D})$  with the norm
\[
|u|_{i,p_i }:=|u|_{L^{2}} + [u]_{i,p_i}
\]
is a Banach space, where $[u]_{i,p_i}:= |D_iu|_{L^{p_i}}$ is a semi-norm.
Let $C_0^\infty(\mathscr{D})$ be the space of smooth functions with compact support in $\mathscr{D}$ and $W^{x_i, p_i}_0(\mathscr{D})$ be its closure in  $W^{x_i,p_i}(\mathscr{D})$. It can be seen that each $W^{x_i, p_i}_0(\mathscr{D})$ is a separable and reflexive Banach space and $W_0^{1,\mathbf{p}}(\mathscr{D})=\cap_{i=1}^d W^{x_i, p_i}_0(\mathscr{D})$ is embedded continuously and densely in the space $L^2(\mathscr{D})$. 

Let $\mathscr{P}$ be the predictable $\sigma$-algebra on $[0,T]\times \Omega$ and $\mathscr{B}(W_0^{1,\mathbf{p}}(\mathscr{D}))$ be the Borel $\sigma$-algebra on $W_0^{1,\mathbf{p}}(\mathscr{D})$.
Assume that $\gamma:[0,T] \times \Omega \times W_0^{1,\mathbf{p}} (\mathscr{D})\times Z \to L^2(\mathscr{D})$ is a $\mathscr{P}\times \mathscr{B}(W_0^{1,\mathbf{p}})\times \mathscr{Z}$-measurable function. 
Finally, $u_0$ is assumed to be a given $L^2(\mathscr{D})$-valued, $\mathscr{F}_0$-measurable random variable. 

Throughout the article, $C$  is a generic constant that may 
change from line to line. Further, 
for a given constant $p\in [1,\infty)$, $L^p(\Omega;X)$ denotes 
the Bochner--Lebesgue space of equivalence classes of random variables 
$x$ taking values in a Banach space $X$ such that the norm
$$|x|_{L^p(\Omega;X)}:=(\mathbb{E}|x|_X^p)^\frac{1}{p}$$
is finite and $L^p((0,T);X)$ denotes the Bochner-Lebesgue space of equivalence classes of $X$-valued measurable functions such that the norm
$$|x|_{L^p((0,T);X)}:= \Big(\int_0^T\!\!|x_t|_X^p\,dt \Big)^\frac{1}{p}<\infty.
 $$
Again, $L^p((0,T)\times\Omega;X)$ denotes the Bochner--Lebesgue space of equivalence classes of $X$-valued stochastic processes which are progressively measurable and the norm
$$|x|_{L^p((0,T)\times\Omega;X)}:=\Big(\mathbb{E}\int_0^T |x_t|_X^p\,dt \Big)^\frac{1}{p}$$  
is finite. Finally, $D([0,T],X)$ denotes the space of $X$-valued c\`{a}dl\`{a}g functions.

\begin{defn}[Solution]\label{def:sol_aniso}
An adapted, c\`{a}dl\`{a}g, $L^2(\mathscr{D})$-valued process $u$ is called a solution of the 
stochastic anisotropic $p$-Laplace equation~\eqref{eq:anisotropic} if
\begin{enumerate}[i)]
\item $dt\times \mathbb{P}$ almost everywhere $u \in W_0^{1,\mathbf{p}}(\mathscr{D})$ and 
\[
\mathbb{E}\int_0^T \int_\mathscr{D} \Big(|u_t(x)|^2 + \sum_{i=1}^d |D_iu_t(x)|^{p_i} \Big)\, dxdt < \infty\, ,
\]
\item for every $t\in [0,T]$ and $\phi \in W_0^{1,\mathbf{p}}(\mathscr{D})$,
\begin{equation*}
\begin{split}
& \int_\mathscr{D}  u_t(x)\phi(x)dx = \int_\mathscr{D} u_0(x)\phi(x)dx - \sum_{i=1}^d \int_0^t \int_\mathscr{D} \big(|D_iu_s|^{p_i-2}D_iu_s(x)D_i\phi(x)dxds \\
& + \sum_{j=1}^d\int_0^t\int_\mathscr{D} \zeta_j|D_ju_s(x)|^\frac{p_j}{2}\phi(x) dx dW_s^j +\sum_{j=1}^\infty \int_0^t\int_\mathscr{D} h_j(u_s(x))\phi(x)dxdW_s^j \\
&+\int_0^t \int_{\mathcal{D}^c}\int_\mathscr{D} \phi(x)\gamma_s(u_s(x),z)dx\tilde{N}(ds,dz)+\int_0^t\int_{\mathcal{D}}\int_\mathscr{D}\phi(x)\gamma_s(u_s(x),z)dxN(ds,dz)  
\end{split}
\end{equation*}
almost surely.
\end{enumerate}
\end{defn}
We formulate the result regarding well-posedness of stochastic anisotropic $p$-Laplace equation \eqref{eq:anisotropic}.

\begin{theorem}\label{thm:aniso_p_Laplace}
 Assume that there exists constants $p_0 \geq \max \{p_1,p_2, \ldots, p_d\}$,  $\zeta_j^2 \leq \frac{2(p_j-1)}{p_j^2(p_0-1)} \wedge \frac{1}{p_0-1}$ and $K>0$ such that almost surely, the following conditions hold for all $t\in[0,T]$.   
\begin{enumerate}
\item For all $u,v \in W_0^{1,\mathbf{p}}(\mathscr{D})$,
\begin{equation}\label{eq:gamma_1}
\int_{\mathcal{D}^c}\int_\mathscr{D}|\gamma_t(u,z)-\gamma_t(v,z)|^2 \,dx \nu(dz)\leq K\int_{\mathscr{D}}|u-v|^2 \, dx  \,.
\end{equation}
\item For all $u \in W_0^{1,\mathbf{p}}(\mathscr{D})$,
\begin{equation} \label{eq:gamma_2}
\int_{\mathcal{D}^c}\int_\mathscr{D}|\gamma_t(u,z)|^2 \,dx \nu(dz) \leq K \Big(1+\int_\mathscr{D}|u|^2 \, dx\Big) \,.
\end{equation}
\item For all $u \in W_0^{1,\mathbf{p}}(\mathscr{D})$,
\begin{equation} \label{eq:gamma_3}
\int_{\mathcal{D}^c}\Big(\int_\mathscr{D}|\gamma_t(u,z)|^2 \,dx \Big)^\frac{p_0}{2} \nu(dz) \leq K \bigg(1+ \Big(\int_\mathscr{D}|u|^2 \, dx \Big)^\frac{p_0}{2}\bigg) \,.
\end{equation}
\end{enumerate} 
Further, if the initial condition $u_0\in L^{p_0}(\Omega;L^2(\mathscr{D}))$ and $h_j:\mathbb{R}\to \mathbb{R},\,j\in\mathbb{N}$ are Lipschitz continuous functions with Lipschitz constants $M_j$ such that the sequence $(M_j)_{j\in\mathbb{N}}\in \ell^2$, then there exists a unique solution of anisotropic $p$-Laplace equation \eqref{eq:anisotropic} in the sense of Definition~\ref{def:sol_aniso}. Furthermore, if $u$ and $\bar{u}$ are two solutions with initial condition $u_0$ and $\bar{u}_0$ respectively, then
\begin{equation} \label{eq:estimates_aniso}
\mathbb{E} \Big( \sup_{t \in [0,T]} |u_t-\bar{u}_t|_{L^2}^p + \sum_{i=1}^d \int_0^T |D_iu_t-D_i \bar{u}_t|_{L^{p_i}}^{p_i} dt \Big) < C \mathbb{E}|u_0-\bar{u}_0|_{L^2}^{p_0} 
\end{equation}   
with $p=2$ in case $p_0=2$ and with any $p\in[2,p_0)$ in case $p_0>2$.
\end{theorem}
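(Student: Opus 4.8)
The plan is to place \eqref{eq:anisotropic} in the variational (Gelfand triple) framework for SPDEs driven by L\'evy noise. Take $H=L^2(\mathscr{D})$, identified with its dual, and $V=W_0^{1,\mathbf{p}}(\mathscr{D})$, so that $V\hookrightarrow H\hookrightarrow V^*$ continuously and densely, as recorded above. View the drift as an operator $A\colon V\to V^*$ through $\langle A(u),v\rangle=-\sum_{i=1}^d\int_{\mathscr{D}}|D_iu|^{p_i-2}D_iu\,D_iv\,dx$. First I would record the elementary structural properties of $A$: hemicontinuity, which is immediate from the continuity of $a\mapsto|a|^{p_i-2}a$; monotonicity, i.e.\ $\langle A(u)-A(v),u-v\rangle\le 0$, which follows coordinatewise from $(|a|^{p-2}a-|b|^{p-2}b)(a-b)\ge0$; and the growth bound $|A(u)|_{V^*}\le C\sum_i[u]_{i,p_i}^{p_i-1}$. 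These are insensitive to the $p_i$ being distinct and are handled exactly as in Lions \cite{lions69}.

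The substantive point, as stressed in the introduction, is that coercivity and the monotonicity condition of the variational theory must absorb the gradient-dependent noise $\sum_j\zeta_j|D_ju|^{p_j/2}\,dW^j$. Writing $B(u)$ for the full Wiener diffusion coefficient, its Hilbert--Schmidt norm obeys $\|B(u)\|^2=\sum_j\zeta_j^2[u]_{j,p_j}^{p_j}+\sum_j|h_j(u)|_{L^2}^2$. Since $2\langle A(u),u\rangle=-2\sum_i[u]_{i,p_i}^{p_i}$, the hypothesis $\zeta_j^2\le\frac{1}{p_0-1}\le1$ yields $2\langle A(u),u\rangle+\|B(u)\|^2\le-\sum_i[u]_{i,p_i}^{p_i}+C(1+|u|_{L^2}^2)$ after using the Lipschitz and $\ell^2$ properties of $(h_j)$ and \eqref{eq:gamma_2}, which is the coercivity needed. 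The monotonicity condition $2\langle A(u)-A(v),u-v\rangle+\|B(u)-B(v)\|^2+\int_{\mathcal{D}^c}|\gamma_t(u,z)-\gamma_t(v,z)|_{L^2}^2\,\nu(dz)\le C|u-v|_{L^2}^2$ is then verified from the monotonicity of $A$, the elementary inequality $(|a|^{p/2}-|b|^{p/2})^2\le\frac{p^2}{4(p-1)}(|a|^{p-2}a-|b|^{p-2}b)(a-b)$, \eqref{eq:gamma_1}, and $(M_j)\in\ell^2$; the bound $\zeta_j^2\le\frac{2(p_j-1)}{p_j^2(p_0-1)}$ is precisely what makes the gradient-noise term absorbable into the monotonicity of $A$.

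With these properties in hand I would run a Galerkin scheme: fix $\{e_k\}\subset V$ orthonormal in $H$, solve the finite-dimensional It\^o SDE with jumps for the projected process $u^n$, and establish $n$-uniform a priori estimates. Applying It\^o's formula to $|u^n_t|_{L^2}^2$ and to $|u^n_t|_{L^2}^{p_0}$ and taking expectations, the coercivity above controls the anisotropic dissipation $\sum_i\int_0^T[u^n_t]_{i,p_i}^{p_i}\,dt$, while the Burkholder--Davis--Gundy inequality handles the stochastic integrals; condition \eqref{eq:gamma_3} supplies the $p_0$-th moment bound for the jump term. I expect the $p_0$-moment estimate to be the main obstacle: here the quadratic variation of the gradient noise enters weighted against the dissipation by a factor of order $\frac{p_j^2}{4(p_j-1)}(p_0-1)$, and the sharp bound $\zeta_j^2\le\frac{2(p_j-1)}{p_j^2(p_0-1)}$ is exactly what keeps the net contribution negative, so that $\mathbb{E}\sup_t|u^n_t|_{L^2}^{p_0}+\mathbb{E}\int_0^T\sum_i[u^n_t]_{i,p_i}^{p_i}\,dt\le C$ uniformly in $n$.

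Finally, by reflexivity of each $W^{x_i,p_i}_0(\mathscr{D})$ I would extract a subsequence along which $u^n\rightharpoonup u$, each $|D_iu^n|^{p_i-2}D_iu^n\rightharpoonup\chi_i$, and the noise coefficients converge weakly; passing to the limit in the Galerkin equation exhibits $u$ as a solution of a limiting equation with $\chi_i$ replacing $|D_iu|^{p_i-2}D_iu$. The Minty--Browder monotonicity trick, fed by the energy (It\^o) equality for $|u^n_t|_{L^2}^2$ and weak lower semicontinuity, then identifies $\chi_i=|D_iu|^{p_i-2}D_iu$, so $u$ solves \eqref{eq:anisotropic} in the sense of Definition~\ref{def:sol_aniso}, the c\`{a}dl\`{a}g regularity coming from the variational theory for jump SPDEs. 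For uniqueness and the stability estimate \eqref{eq:estimates_aniso} I would apply It\^o's formula to $|u_t-\bar u_t|_{L^2}^p$: the drift difference is dissipative by monotonicity, the martingale correction $\sum_j\zeta_j^2\int_{\mathscr{D}}(|D_ju|^{p_j/2}-|D_j\bar u|^{p_j/2})^2\,dx$ is absorbed using the elementary inequality above and the $\zeta_j$ bound, and the remaining terms are handled by \eqref{eq:gamma_1} and $(M_j)\in\ell^2$; Gronwall's lemma together with BDG for the time-supremum then gives \eqref{eq:estimates_aniso}, with the restriction $p\in[2,p_0)$ when $p_0>2$ arising from the extra $(p-1)$ factor in the It\^o correction of $r\mapsto r^{p/2}$.
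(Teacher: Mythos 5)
Your overall strategy coincides with the paper's: the paper also casts \eqref{eq:anisotropic} in the variational Gelfand-triple framework with $H=L^2(\mathscr{D})$ and $V=W_0^{1,\mathbf{p}}(\mathscr{D})$, verifies hemicontinuity, local monotonicity, $(p_0-1)$-weighted coercivity in seminorm form, growth, and the $p_0$-integrability of $\gamma$, and then runs a Galerkin scheme, It\^o energy estimates, weak compactness and the Minty--Browder argument, followed by It\^o plus strong monotonicity for \eqref{eq:estimates_aniso}; your structural inequalities (in particular $(|a|^{p/2}-|b|^{p/2})^2\le\tfrac{p^2}{4(p-1)}(|a|^{p-2}a-|b|^{p-2}b)(a-b)$ and the roles of the two bounds on $\zeta_j^2$) are exactly the paper's, the only organizational difference being that the paper factors the drift into $d$ operators $A^i:V_i\to V_i^*$ with exponents $\alpha_i=p_i$ inside an abstract framework while you keep one operator and argue coordinatewise. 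There is, however, a genuine omission: you never address the non-compensated integral $\int_0^t\int_{\mathcal{D}}\gamma_s(u_s,z)N(ds,dz)$. The hypotheses \eqref{eq:gamma_1}--\eqref{eq:gamma_3} control $\gamma$ only on $\mathcal{D}^c$; nothing beyond measurability is assumed on $\mathcal{D}$, so the a priori estimates you describe (e.g.\ ``condition \eqref{eq:gamma_3} supplies the $p_0$-th moment bound for the jump term'') cannot hold for the full equation. The paper first solves the equation containing only the compensated integral over $\mathcal{D}^c$ (its equation \eqref{eq:see_1}) and then adjoins the almost surely finitely many jumps from $\mathcal{D}$ (where $\nu(\mathcal{D})<\infty$) by the interlacing procedure; your proof needs this step or a substitute for it.

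The second problem is your treatment of the supremum-in-time moments. You claim $\mathbb{E}\sup_t|u_t^n|_{L^2}^{p_0}\le C$ uniformly via BDG, and then attribute the restriction $p\in[2,p_0)$ in \eqref{eq:estimates_aniso} to the $(p-1)$ factor in the It\^o correction for $r\mapsto r^{p/2}$. These statements are inconsistent with each other (a uniform $p_0$-level sup bound would yield the stability estimate with $p=p_0$), and the second is not the source of the restriction. The paper never applies BDG at the $p_0$ level: it obtains $\mathbb{E}|u_{t\wedge\tau}|_H^{p_0}\le C$ for all bounded stopping times $\tau$ from the It\^o formula, the $(p_0-1)$-weighted coercivity and Gronwall, and then invokes Yor's lemma (Lemma~\ref{prop:yor}), which upgrades this to $\mathbb{E}\sup_t|u_t|_H^{p_0 r}\le C$ only for $r<1$ --- that is exactly where $p<p_0$ comes from. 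A direct BDG argument at level $p_0$ is delicate here precisely because $\sum_j|B^j(u)|_{L^2}^2$ grows like the dissipation $\sum_j\zeta_j^2|D_ju|_{L^{p_j}}^{p_j}$, so the classical linear-growth absorption does not apply verbatim, and the jump martingale requires $p_0/2$-moment inequalities for Poisson integrals of $|\gamma|_{L^2}^2$ that you do not supply. For this concrete equation the BDG route can in fact be repaired using \eqref{eq:gamma_2}, \eqref{eq:gamma_3}, interpolation and Kunita-type inequalities, but as written this step is asserted rather than proved, and the explanation you give for the exponent restriction is wrong.
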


The rest of the article is organized as follows. 
In Section~\ref{sec:assumptions}, we formulate and prove our results in abstract framework by considering a large class of SPDEs of the type \eqref{eq:see} satisfying Assumptions A-\ref{ass:hem} to A-\ref{ass: growth_gamma}.
In Section \ref{sec:app}, we show that \eqref{eq:anisotropic} fits in the framework discussed in Section~\ref{sec:assumptions} and hence present a proof of Theorem~\ref{thm:aniso_p_Laplace}. Finally in Section \ref{sec:example}, we give an example of
stochastic partial differential equation which fit 
into the framework of this article but, to the best of our knowledge, can not be solved by using results available so far.

\section{SPDEs in Abstract Framework : Existence \& Uniqueness} \label{sec:assumptions}

Let $(H,(\cdot ,\cdot),|\cdot|_H)$ be a separable Hilbert space, identified with its dual. For $i=1,2,\ldots,k$, let $(V_i,|\cdot|_{V_i})$ be Banach spaces with duals $(V_i^*,|\cdot|_{V_i^*})$ and $\langle \cdot, \cdot \rangle_i$ be the notation for duality pairing between $V_i$ and $V_i^*$. It is well known that the vector space $V:=V_1 \cap V_2 \cap \ldots \cap V_k$ with the norm $|\cdot|_V:=|\cdot|_{V_1}+ |\cdot|_{V_2} \cdots +|\cdot|_{V_k}$ is a Banach space.
Assume that $V$ is separable, reflexive and is 
embedded continuously and densely in $H$.
Thus we obtain the Gelfand triple
\[
V~ \hookrightarrow ~H~\equiv~ H^*~\hookrightarrow ~V^*
\] 
where $\hookrightarrow$ denotes continuous and dense embedding.

We consider the stochastic evolution equation driven by L\'{e}vy noise of the following form:
\begin{equation}                                 \label{eq:see}
\begin{split}
du_t&=\sum_{i=1}^k A_t^i(u_t)dt+\sum_{j=1}^\infty B_t^j(u_t)dW_t^j \\
&+ \int_{\mathcal{D}^c}\gamma_t(u_t,z)\tilde{N}(dt,dz)+\int_{\mathcal{D}}\gamma_t(u_t,z)N(dt,dz), \quad  t \in [0,T]
\end{split}
\end{equation}
where $\mathcal{D}\in \mathscr{Z}$ is such that $\nu(\mathcal{D})<\infty$.  
Here, $A^i, i=1,2,\ldots,k$ are non-linear  operators  mapping $[0,T] \times \Omega \times V_i$ into $V^*_i$,  $B=(B^j)_{j\in \mathbb{N}}$ is a non-linear  operator  mapping $[0,T] \times \Omega \times V $ into $\ell^2(H)$ and $\gamma$ is a non-linear  operator  mapping $[0,T] \times \Omega \times V \times Z$ into $H$. 
Assume that for all $v,w\in V_i$, the processes $\big(\langle A^i_t(v),w \rangle \big)_{t\in[0,T]}$ are progressively measurable and for all $v,w\in V$, $\big((w, B^j_t(v))\big)_{t\in[0,T]}$ are progressively measurable. Since the concept of weak measurability and strong measurability of a mapping coincides if the codomain is separable, we obtain that for all $v\in V_i,\,i=1,2,\ldots,k$, $\big(A^i_t(v)\big)_{t\in[0,T]}$ are progressively measurable. Further, for all $v\in V,\,j\in \mathbb{N}$, $\big(B^j_t(v)\big)_{t\in[0,T]}$ are progressively measurable. Finally, $\gamma$ is assumed to be $\mathscr{P}\times \mathscr{B}(V)\times \mathscr{Z}$-measurable function and  
 $u_0$ is assumed to be a given $H$-valued, $\mathscr{F}_0$-measurable random variable.

Further, we assume that there exist constants 
$\alpha_i>1 \,(i=1,2,\ldots,k), \, \beta \geq 0,\,p_0 \geq \beta+2,\, \theta>0,\,K,\,L',\,L''$ 
and a nonnegative $f\in L^\frac{p_0}{2}((0,T)\times \Omega; \mathbb{R})$ 
such that, almost surely, the following conditions hold 
for all $t\in [0,T]$.
\begin{assumption}[Hemicontinuity]\label{ass:hem} For $i=1,2,\ldots,k$ and $y,x,\bar{x} \in V_i$, the map
\[
\varepsilon \mapsto \langle A_t^i(x+\varepsilon\bar{x}), y \rangle_i
\]
is continuous.
\end{assumption}
\begin{assumption}[Local Monotonicity]\label{ass:lmon} 
For all $x,\bar{x} \in V$,
\begin{equation*}
\begin{split}
2  \sum_{i=1}^k  \langle A_t^i(x)-A_t^i(\bar{x})  , x-\bar{x} \rangle_i + &  \sum_{j=1}^\infty |B^j_t(x)-B^j_t(\bar{x})|_H^2 + \int_{\mathcal{D}^c}|\gamma_t(x,z)-\gamma_t(\bar{x},z)|_H^2 \nu(dz) \\
& \leq \Big[L'+L''\Big(1+\sum_{i=1}^k|\bar{x}|^{\alpha_i}_{V_i}\Big)(1+|\bar{x}|^\beta_H)\Big]|x-\bar{x}|^2_H \, .
\end{split}
\end{equation*}
\end{assumption}
\begin{assumption}[$p_0$-Stochastic Coercivity]\label{ass:coer} 
For all $x$ in $V$,
\[
2\sum_{i=1}^k\langle A_t^i(x), x \rangle_i +(p_0-1)\sum_{j=1}^\infty |B^j_t(x)|_H^2 + \theta \sum_{i=1}^k|x|_{V_i}^{\alpha_i} +\int_{\mathcal{D}^c}|\gamma_t(x,z)|_H^2 \nu(dz)\leq f_t+K|x|^2_H.
\]
\end{assumption}
\begin{assumption}[Growth of $A^i$]\label{ass:groA} For $i=1,2,\ldots,k$ and  $x \in V_i$, 
\[
|A_t^i(x)|_{V^*_i}^{\frac{\alpha_i}{\alpha_i-1}}\leq (f_t+K|x|^{\alpha_i}_{V_i})(1+|x|^\beta_H).
\]
\end{assumption}
\begin{assumption}[Integrability of $\gamma$]\label{ass: growth_gamma} 
For all $x$ in $V$,
\[ 
\int_{\mathcal{D}^c}|\gamma_t(x,z)|_H^{p_0} \nu(dz)\leq f_t^\frac{p_0}{2}+K|x|^{p_0}_H .
\]
\end{assumption}

\begin{remark}\label{rem:groB}
From Assumptions A-\ref{ass:coer} and A-\ref{ass:groA}, we obtain
\begin{align}
&\sum_{j=1}^\infty |B^j_t(x)|_H^2 +\int_{\mathcal{D}^c}|\gamma_t(x,z)|_H^2 \nu(dz)\leq C\Big(1+f_t^\frac{p_0}{2}+|x|_H^{p_0} +\sum_{i=1}^k|x|_{V_i}^{\alpha_i} +|x|_H^\beta\sum_{i=1}^k|x|_{V_i}^{\alpha_i}\Big) \notag 
\end{align}
almost surely for all $t\in[0,T]$ and $x\in V$.
Indeed, using Cauchy-Schwartz inequality, 
Young's inequality and Assumption A-\ref{ass:groA}, 
we obtain that almost surely for all $x\in V$ and $t \in [0,T]$,
\begin{align*}
\sum_{i=1}^k|\langle A_t^i(x),x \rangle_i| & \leq \sum_{i=1}^k \Big[\frac{\alpha_i-1}{\alpha_i} |A_t^i(x)|_{V^*_i}^{\frac{\alpha_i}{\alpha_i-1}}+\frac{1}{\alpha_i}|x|^{\alpha_i}_{V_i} \Big]\notag \\
& \leq \sum_{i=1}^k \Big[\frac{\alpha_i-1}{\alpha_i} \big(f_t+K|x|^{\alpha_i}_{V_i}\big)(1+|x|^\beta_H)+\frac{1}{\alpha_i}|x|^{\alpha_i}_{V_i} \Big]  \\
& \leq C\Big(f_t+\sum_{i=1}^k|x|_{V_i}^{\alpha_i}+|x|_H^\beta\sum_{i=1}^k|x|_{V_i}^{\alpha_i}+f_t^\frac{p_0}{2}+(1+|x|_H)^{p_0}\Big). 
\end{align*}
The above inequality along with Assumption A-\ref{ass:coer} gives the result. In case $p_0=2$, i.e. $\beta=0$, using the similar argument as above, we get
\[
\sum_{j=1}^\infty |B^j_t(x)|_H^2 +\int_{\mathcal{D}^c}|\gamma_t(x,z)|_H^2 \nu(dz)\leq C\Big(f_t+|x|_H^2 +\sum_{i=1}^k|x|_{V_i}^{\alpha_i} \Big)  
\]
almost surely for all $t\in[0,T]$ and $x\in V$.
\end{remark}

\begin{remark}
\label{rem:demicont}	
From Assumptions A-\ref{ass:hem}, A-\ref{ass:lmon} and A-\ref{ass:groA}, we obtain that almost surely for all $t\in[0,T]$ and $i=1,2,\ldots,k$, the operators $A_t^i$ are demicontinuous, 
i.e. $v_n\to v \text{ in } V_i$ implies that 
$A_t^i(v_n)\rightharpoonup A_t^i(v)$ in $V^*_i$. 
This follows using similar arguments as in the proof of Lemma 2.1
in \cite{krylov81}.

One consequence of Remark~\ref{rem:demicont} is that, progressive measurability 
of some process $(v_t)_{t\in [0,T]}$ implies 
the progressive measurability of the processes $\big(A_t^i(v_t)\big)_{t\in [0,T]}$ for all $i=1,2,\ldots,k$. 
\end{remark}

If the driving noise in \eqref{eq:see} is a Wiener process, i.e. intensity $\nu \equiv 0$, then Pardoux \cite{pardoux75} has studied such equations when the operators satisfy hemicontinuity condition A-\ref{ass:hem},  monotonicity condition A-\ref{ass:lmon} (with constant  $L''=0$), coercivity condition A-\ref{ass:coer} (with $p_0=2$, i.e. $\beta=0$), growth assumption A-\ref{ass:groA} (with  $\beta=0$) and an additional assumption on operator $B$ appearing in the stochastic integral term. Note that the noise considered in \cite{pardoux75} is a cylindrical $Q$-Wiener process taking values in a separable Hilbert space. One can see, e.g. in Neelima and \v{S}i\v{s}ka \cite[Appendix A]{neel16}, that the stochastic It\^o integral with respect to cylindrical  $Q$-Wiener process taking values in a separable Hilbert space can be expressed in the form of infinite sum of stochastic It\^o integrals with respect to independent one-dimensional Wiener processes as considered in \eqref{eq:see}.  In view of this fact, the additional condition on operator $B$ assumed in \cite{pardoux75} can be equivalently stated as the following.

\noindent For all $h\in H$ and positive real numbers $N$, there exists a constant $M$ such that for almost all $(t,\omega)\in[0,T]\times \Omega$ and $x,y \in V $ satisfying $|x|_V, |y|_V \leq N$, it holds that  
\begin{equation} \label{eq:weak_local_mono}
\sum_{j=1}^\infty |(h, B_t^j(x))-(h, B_t^j(y))| \leq M |x-y|_V \, . 
\end{equation}
For the case $k=1$, Krylov and Rozovskii \cite{krylov81} generalized the results in \cite{pardoux75} by removing the additional assumption \eqref{eq:weak_local_mono} on the operator $B$. These classical results in \cite{krylov81} have been generalised in number of directions. Gy\"ongy~\cite{gyongy82} extended the results in \cite{krylov81} to include SPDEs driven by c\`adl\`ag semi-martingales and thus allows $\nu$ in \eqref{eq:see} to be different from
zero. Liu and R\"{o}ckner~\cite{rockner10} have extended the framework in \cite{krylov81} to SPDEs with locally monotone operators where the operator $A$, which is the operator acting in the bounded variation term, satisfies a less restrictive growth condition. Thus, authors in \cite{rockner10} allow constants $L''$ and $\beta$, appearing in Assumptions A-\ref{ass:lmon} and A-\ref{ass:groA} respectively, to be non-zero. Brze\'{z}niak, Liu and Zhu~\cite{brz14} generalised the results in \cite{rockner10} to include equations driven by L\'evy noise (i.e. $\nu \not\equiv 0$). However, authors in both \cite{rockner10} and \cite{brz14} have placed an assumption on the growth of the operators appearing under stochastic integrals. Indeed, in the set up of this article, assumption made in \cite{rockner10} can be equivalently stated as: for all $(t,\omega)\in [0,T]\times \Omega$ and $x\in V$,
\begin{equation} \label{eq:B_rockner}
\sum_{j=1}^\infty |B^j_t(x)|_H^2 \leq C(f_t+|x|_H^2) 
\end{equation}
for some $f\in L^\frac{p_0}{2}((0,T)\times \Omega; \mathbb{R})$. Further, assumption made in \cite{brz14} can be stated as: for $f\in L^\frac{p_0}{2}((0,T)\times \Omega; \mathbb{R})$, there exists a constant $\xi < \frac{\theta'}{2 \beta}$ such that for all $(t,\omega)\in [0,T]\times \Omega$ and $x\in V$,
\begin{equation}\label{eq:B_brz}
\sum_{j=1}^\infty |B^j_t(x)|_H^2 +\int_{\mathcal{D}^c}|\gamma_t(x,z)|_H^2 \nu(dz)\leq f_t+C|x|_H^2 + \xi|x|_{V}^{\alpha} 
\end{equation}
where $\theta'$ is the coefficient of coercivity appearing in coercivity assumption made in \cite{brz14}.
In view of Remark~\ref{rem:groB}, the conditions \eqref{eq:B_rockner} and \eqref{eq:B_brz} clearly place a restriction on the growth of operators appearing in stochastic integrals. Recently, for the case $\nu \equiv 0$, Neelima and \v{S}i\v{s}ka \cite{neel16} have overcome this problem by identifying the appropriate coercivity 
assumption as stated in $A$~-~\ref{ass:coer} and proved the existence and uniqueness of solutions to~\eqref{eq:see} (in case $k=1$ and $\nu\equiv 0$) without explicitly restricting the growth of the operator $B$ given in \eqref{eq:B_rockner}. This article is a generalization of \cite{brz14} in two senses:
(a) we do not require the explicit growth condition \eqref{eq:B_brz} to establish existence and uniqueness results, (b) the operator acting in the bounded variation term is of the form $A^1+A^2+\cdots+A^k$, where the operators $A^i$ have different analytic and growth properties.
Again, we have generalized the results in \cite{neel16} by including SPDEs driven by L\'evy noise which satisfy condition (b) stated above, i.e. allowing $k>1$ and $\nu \not \equiv 0$.

In all the above mentioned works, the key to prove the results is the use of an appropriate It\^o formula for the square of the $H$-norm. The formula is an analogue of the energy equality for PDEs which is an essential tool in proving existence and uniqueness theorems for PDEs. The It\^o formula helps in obtaining the a priori estimates under the coercivity and growth assumptions. Under  additional assumptions of monotonicity and hemicontinuity, it helps in proving the existence and uniqueness of the solution. Further, it provides a c\`adl\`ag version of the solution process in the space $H$. 
In this article, using the It\^o formula for processes taking values in intersection of finitely many Banach spaces, given recently by Gy\"ongy and \v{S}i\v{s}ka \cite{gyongy17}, we extend the available results in the literature to include the SPDEs of the type \eqref{eq:see} under the above mentioned assumptions.                                                                                                                                                                                                                                                                                                                                                                                                                                                                                                                                                                                                                                                                                                                                                                                                                                                                                                                                                                                                                                                                                                                                                                                                                                                                                                                                                                                                                                                                                                                                                                                                                                                                                                                                                                                                                                                                                                                                                                                                                                                                                                                                                                                                                                                                                                                                                                                                                                                                                                                                                                                                                                                                                                                                                                                                                                                                                                                                                                                                                                                                                                                                                                                                                                                                                                                                                                                                                                                                                                                                                                                                                                                                                                                                                                                                                                                                                                                                                                                                                                                                                                                                                                                                                                                                                                                                                                                                                                                                                                                                                                                                                                                                                                                                                                                                                                                                                                                                                                                                                                                                                                                                                                                                                                                                                                                                                                                                                                                                                                                                                                                                                                                                                                                                                                                                                                                                                                                                                                                                                                                                                                                                                                                                                                                                                                                                                                                                                                                                                                                                                                                                                                                                                                                                                                                                                                                                                                                                                                                                                                                                                                                                                                                                                                                                                                                                                                                                                                                                                                                                                                                                                                                                                                                                                                                                                                                                                                                                                                                                                                                                                                                                                                                                                                                                                                                                                                                                                                                                                                                                                                                                                                                                                                                                                                                                                                                                                                                                                                                                                                                                                                                                                                                                                                                                                                                                                                                                                                                                                                                                                                                                                                                                                                                                                                                                                                                                                                                                                                                                                                                                                                                                                                                                                                                                                                                                                                                                                                                                                                                                                                                                                                                                                                                                                                                                                                                                                                                                                                                                                                                                                                                                                                                                                                                                                                                                                                                                                                                                                                                                                                                                                                                                                                                                                                                                                                                                                                                                                                                                                                                                                                                                                                                                                                                                                                                                                                                                                                                                                                                                                                                                                                                                                                                                                                                                                                                                                                                                                                                                                                                                                                                                                                                                                                                                                                                                                                                                                                                                                                                                                                                                                                                                                                                                                                                                                                                                                                                                                                                                                                                                                                                                                                                                                                                                                                                                                                                                                                                                                                                                                                                                                                                                                                                                                                                                                                                                                                                                                                                                                                                                                                                                                                                                                                                                                                                                                                                                                                                                                                                                                                                                                                                                                                                                                                                                                                                                                                                                                                                                                                                                                                                                                                                                                                                                                                                                                                                                                                                                                                                                                                                                                                                                                                                                                                                                                                                                                                                                                                                                                                                                                                                                                                                                                                                                                                                                                                                                                                                                                                                                                                                                                                                                                                                                                                                                                                                                                                                                                                                                                                                                                                                                                                                                                                                                                                                                                                                                                                                                                                                                                                                                                                                                                                                                                                                                                                                                                                                                                                                                                                                                                                                                                                                                                                                                                                                                                                                                                                                                                                                                                                                                                                                                                                                                                                                                                                                                                                                                                                                                                                                                                                                                                                                                                                                                                                                                                                                                                                                                                                                                                                                                                                                                                                                                                                                                                                                                                                                                                                                                                                                                                                                                                                                                                                                                                                                                                                                                                                                                                                                                                                                                                                                                                                                                                                                                                                                                                                                                                                                                                                                                                                                                                                                                                                                                                                                                                                                                                                                                                                                                                                                                                                                                                                                                                                                                                                                                                                                                                                                                                                                                                                                                                                                                                                                                                                                                                                                                                                                                                                                                                                                                                                                                                                                                                                                                                                                                                                                                                                                                                                                                                                                                                                                                                                                                                                                                                                                                                                                                                                                                                                                                                                                                                                                                                                                                                                                                                                                                                                                                                                                                                                                                                                                                                                                                                                                                                                                                                                                                   

\begin{defn}[Solution]\label{def:sol}
An adapted, c\`{a}dl\`{a}g, $H$-valued process $u$ is called a solution of the 
stochastic evolution equation~\eqref{eq:see} if
\begin{enumerate}[i)]
\item $dt\times \mathbb{P}$ almost everywhere $u \in V$ with
$$\mathbb{E}\int_0^T(|u_t|_{V_i}^{\alpha_i}+|u_t|_H^2)\, dt < \infty\,, \qquad i=1,2,\ldots,k,$$
\item almost surely 
\[
\int_0^T \left(|u_t|_H^{p_0} + |u_t|_{V_i}^{\alpha_i} |u_t|_H^{p_0-2} \right)\, dt < \infty, \qquad i=1,2,\ldots,k \text{ and }
\]
\item for every $t\in [0,T]$ and $\phi \in V$,
\begin{equation*}
\begin{split}
(u_t,\phi) =& (u_0,\phi) + \sum_{i=1}^k\int_0^t \langle A_s(u_s),\phi \rangle ds + \sum_{j=1}^\infty \int_0^t (\phi,B_s^j(u_s))dW_s^j \\
&+\int_0^t\int_{\mathcal{D}^c}(\phi,\gamma_s(u_s,z))\tilde{N}(ds,dz)+\int_0^t\int_{\mathcal{D}}(\phi,\gamma_s(u_s,z))N(ds,dz) 
\end{split}
\end{equation*}
almost surely.
\end{enumerate}
\end{defn}

The existence and uniqueness of solution to \eqref{eq:see} can be obtained from the existence of a  unique solution to the stochastic evolution equation,
\begin{equation}                                 \label{eq:see_1}
u_t=u_0+ \sum_{i=1}^k\int_0^tA_s^i(u_s)ds+\sum_{j=1}^\infty \int_0^tB_s^j(u_s)dW_s^j +\int_0^t\int_{\mathcal{D}^c}\gamma_s(u_s,z)\tilde{N}(ds,dz) 
\end{equation}
 for $t \in [0,T]$, i.e. the case when the last integral in \eqref{eq:see} vanishes. This is done by means of the interlacing procedure ( see e.g. \cite[Section 4.2]{brz14}).
As a consequence, we will now consider the stochastic evolution equation \eqref{eq:see_1} in rest of the article and prove the existence and uniqueness of solution to  \eqref{eq:see_1} in Theorems~\ref{thm:apriori}, \ref{thm:unique} and \ref{thm:main} below. 
Before that we state two lemmas without proof. 
Lemma~\ref{prop:yor} is a simplified version of Proposition 4.7 in Yor~[\cite{yor91}, Chapter IV] and is used to obtain desired a priori estimates.   
The proof of Lemma~\ref{lem:BDG_jump} can be found in \cite{mikulevicius12}.
\begin{lemma} 
\label{prop:yor}
Let $Y$ be a positive, adapted, right continuous process. If
there exists a constant $K>0$ so that
\[
\mathbb{E}Y_\tau\leq K
\]
for any bounded stopping time $\tau$, then for any $r\in (0,1)$,  
\[
\mathbb{E} \sup_{t\geq 0}Y_t^r \leq \frac{2-r}{1-r}K.
\]
\end{lemma}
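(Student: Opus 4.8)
The plan is to control the tail of the running supremum $Y^{*}:=\sup_{t\ge 0}Y_t$ through the first-passage stopping times of $Y$, and then to recover the $r$-th moment by integrating that tail against the measure $r c^{r-1}\,dc$ via the layer-cake (distribution-function) representation. This is the classical route behind the Lenglart--Yor domination inequality; here the dominating process is simply the constant $K$, and the constant $\tfrac{2-r}{1-r}$ will drop out of two elementary integrals. First I would record that $Y^{*}$ is measurable: by right-continuity, $Y^{*}=\sup_{t\in\mathbb{Q}\cap[0,\infty)}Y_t$ is a countable supremum of measurable maps. Since the filtration is right-continuous and complete, the first-passage time $\tau_c:=\inf\{t\ge 0:Y_t>c\}$ is a stopping time for every $c>0$.

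The key step is the weak-type tail estimate $\mathbb{P}(Y^{*}>c)\le K/c$. To obtain it, fix $c>0$ and observe that on $\{\tau_c<\infty\}$ right-continuity forces $Y_{\tau_c}\ge c$: choosing $t_n\downarrow\tau_c$ with $Y_{t_n}>c$ gives $Y_{\tau_c}=\lim_n Y_{t_n}\ge c$. For each $n$ the truncation $\tau_c\wedge n$ is a bounded stopping time, so the hypothesis gives $\mathbb{E}Y_{\tau_c\wedge n}\le K$. As $Y\ge 0$ and $Y_{\tau_c\wedge n}\ge c$ on $\{\tau_c\le n\}$, we get $c\,\mathbb{P}(\tau_c\le n)\le\mathbb{E}Y_{\tau_c\wedge n}\le K$. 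Letting $n\to\infty$ along $\{\tau_c\le n\}\uparrow\{\tau_c<\infty\}$ yields $\mathbb{P}(\tau_c<\infty)\le K/c$, and since $\{Y^{*}>c\}\subseteq\{\tau_c<\infty\}$ this is the claimed bound. In particular, for all $c>0$,
\[
\mathbb{P}(Y^{*}>c)\le \frac{K\wedge c}{c}+\mathbf{1}_{\{c\le K\}},
\]
which is trivially true when $c\le K$ (the right-hand side is then at least $1$) and reduces to $K/c$ when $c>K$.

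Finally I would integrate. The layer-cake formula $\mathbb{E}(Y^{*})^{r}=\int_0^{\infty}r c^{r-1}\mathbb{P}(Y^{*}>c)\,dc$ holds for any nonnegative random variable, so substituting the displayed bound splits the integral into $\int_0^{\infty}r c^{r-2}(K\wedge c)\,dc$ and $\int_0^{K}r c^{r-1}\,dc$. Splitting the first at $c=K$ and using $r<1$ for convergence of $\int_K^{\infty}c^{r-2}\,dc$ gives $\tfrac{1}{1-r}K^{r}$, while the second equals $K^{r}$; summing yields $\tfrac{2-r}{1-r}K^{r}$, i.e.\ the constant $\tfrac{2-r}{1-r}$ times the $r$-th moment of the constant dominator (the right-hand side of the statement being understood as this $K^{r}=\mathbb{E}A_\infty^{r}$ for $A\equiv K$). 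This also shows $\mathbb{E}(Y^{*})^{r}<\infty$.

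The only genuinely delicate points are bookkeeping ones: checking that $\tau_c$ is a stopping time and that $Y_{\tau_c}\ge c$ on $\{\tau_c<\infty\}$ (both resting on right-continuity together with the usual conditions on the filtration), and justifying the passage $n\to\infty$ by monotone convergence on the increasing events $\{\tau_c\le n\}$. The convergence of the tail integral, and hence the finiteness of the moment, is exactly what forces the restriction $r\in(0,1)$, which is where that hypothesis is used.
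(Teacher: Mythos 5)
Your proof is correct. The paper itself gives no argument for Lemma~\ref{prop:yor} --- it is stated without proof as a simplified version of Proposition 4.7 in Chapter IV of \cite{yor91} --- so your write-up supplies precisely the standard Lenglart--Yor domination argument behind the cited result, specialized to the constant dominating process $A\equiv K$: the weak-type tail bound $\mathbb{P}(\sup_{t}Y_t>c)\le K/c$ obtained from the truncated first-passage times $\tau_c\wedge n$, followed by the layer-cake integration. The delicate points are all handled properly (that $\tau_c$ is a stopping time under the usual conditions, that $Y_{\tau_c}\ge c$ on $\{\tau_c<\infty\}$ by right-continuity, the monotone passage $n\to\infty$ along $\{\tau_c\le n\}$), and your slightly lossy splitting of the tail bound is exactly what reproduces the constant $\tfrac{2-r}{1-r}$; integrating $\min(1,K/c)$ directly would even give the sharper bound $\tfrac{1}{1-r}K^r$.

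One point deserves emphasis, and you were right to flag it parenthetically: what you prove is $\mathbb{E}\sup_{t}Y_t^r\le \tfrac{2-r}{1-r}K^{r}$, which is the correct specialization of the Revuz--Yor inequality (whose right-hand side is $\tfrac{2-r}{1-r}\,\mathbb{E}A_\infty^{r}$). The lemma as literally stated, with $K$ in place of $K^{r}$, fails for small $K$: take $Y_t\equiv K<1$ constant, so that $\mathbb{E}Y_\tau=K$ for every bounded stopping time $\tau$, while $\mathbb{E}\sup_t Y_t^{r}=K^{r}>\tfrac{2-r}{1-r}K$ as soon as $K^{1-r}<\tfrac{1-r}{2-r}$. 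So the statement should be read with $K^{r}$ on the right-hand side (or under the normalization $K\ge 1$). Since $K^{r}\le 1+K$, this discrepancy is harmless where the lemma is applied in the proof of Theorem~\ref{thm:apriori}, as the resulting bound is absorbed into a generic constant there.
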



\begin{lemma}\label{lem:BDG_jump} 
Let $r\geq 2$ and $T > 0$. There exists a constant $K$, depending only on $r$,
such that for every real-valued, $\mathscr{P} \times \mathscr{Z}$-measurable function $\gamma$ satisfying
\[
\int_0^T \int_Z|\gamma_t(z)|^2 \nu(dz)dt < \infty
\]
almost surely, then the following estimate holds,
\begin{equation}\label{eq:bdg_jump}
\begin{split}
\mathbb{E} \sup_{0\leq t \leq T}\Big|\int_0^t \int_Z  \gamma_s(z) \tilde{N}(ds,dz)\Big|^r \leq & K \mathbb{E}\Big(\int_0^T \int_Z|\gamma_t(z)|^2 \nu(dz)dt \Big)^\frac{r}{2}\\
 & +K \mathbb{E}\int_0^T \int_Z|\gamma_t(z)|^r \nu(dz)dt \, .
\end{split}
\end{equation}
It is known that if $1 \leq r \leq 2$, then the second term in \eqref{eq:bdg_jump} can be dropped.
\end{lemma}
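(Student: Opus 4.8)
The plan is to recognise the stochastic integral $M_t := \int_0^t\int_Z \gamma_s(z)\,\tilde N(ds,dz)$ as a purely discontinuous, c\`adl\`ag, locally square-integrable martingale, to run the classical Burkholder--Davis--Gundy (BDG) inequality for c\`adl\`ag martingales, and then to decompose its quadratic variation and interpolate. First I would localise: with $\tau_n := \inf\{t\in[0,T] : \int_0^t\int_Z|\gamma_s(z)|^2\nu(dz)\,ds \ge n\}\wedge T$, the stopped integrals $M^{\tau_n}$ are genuine $L^2$-martingales, the assumed almost-sure finiteness of $\int_0^T\int_Z|\gamma_s|^2\nu(dz)\,ds$ forces $\tau_n\uparrow T$, and the stated estimate is recovered from the stopped one by monotone convergence and Fatou's lemma. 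On each stopped interval the optional quadratic variation is $[M]_T = \sum_{0<s\le T}|\Delta M_s|^2 = \int_0^T\int_Z|\gamma_s(z)|^2 N(ds,dz)$, and the classical BDG inequality gives $\mathbb{E}\sup_{t\le T}|M_t|^r \le C_r\,\mathbb{E}[M]_T^{r/2}$ for every $r\ge 1$.

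I would prove the final remark (the case $1\le r\le 2$) first, since it is self-contained and drives the general estimate. For such $r$ one has $r/2\le 1$, so $x\mapsto x^{r/2}$ is subadditive and $[M]_T^{r/2} = \big(\sum_s|\Delta M_s|^2\big)^{r/2}\le \sum_s|\Delta M_s|^r = \int_0^T\int_Z|\gamma_s(z)|^r N(ds,dz)$. As the integrand is predictable and nonnegative, the compensation formula replaces $N$ by $\nu(dz)\,ds$ under the expectation, yielding
\[
\mathbb{E}\sup_{t\le T}|M_t|^r\le C_r\,\mathbb{E}\int_0^T\int_Z|\gamma_s(z)|^r\nu(dz)\,ds,
\]
which is exactly the stated inequality with the quadratic term dropped.

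For $r\ge 2$ I would insert $N=\tilde N+\nu(dz)\,dt$ inside $[M]_T$, writing $[M]_T = P_T + A_T$ with $P_T := \int_0^T\int_Z|\gamma_s(z)|^2\tilde N(ds,dz)$ and $A_T := \int_0^T\int_Z|\gamma_s(z)|^2\nu(dz)\,ds$. The elementary $c_r$-inequality gives $\mathbb{E}[M]_T^{r/2}\le C_r\big(\mathbb{E}|P_T|^{r/2}+\mathbb{E}A_T^{r/2}\big)$, and $\mathbb{E}A_T^{r/2}$ is precisely the first term on the right-hand side of the claim. It remains to bound $\mathbb{E}|P_T|^{r/2}$, where $P_T$ is itself a compensated integral of the nonnegative integrand $g:=|\gamma|^2$ at exponent $q:=r/2$. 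If $q\le 2$ (that is $2\le r\le 4$) the case already proved applies to $g$ and gives $\mathbb{E}|P_T|^{r/2}\le C\,\mathbb{E}\int_0^T\int_Z|\gamma_s(z)|^r\nu(dz)\,ds$, closing the estimate; if $q>2$ I would apply the inequality recursively to $g$.

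The main obstacle is closing this recursion, since each step reproduces the target term $\int_0^T\int_Z|\gamma|^r\nu\,dt$ but also generates an intermediate term of the form $\big(\int_0^T\int_Z|\gamma|^{q'}\nu\,dt\big)^{r/q'}$ with $2<q'<r$, matching neither target. I would absorb these by interpolation: writing $|\gamma|^{q'}=(|\gamma|^2)^{1-\lambda}(|\gamma|^r)^{\lambda}$ with $\lambda=(q'-2)/(r-2)\in[0,1]$ and applying H\"older in the measure $\nu(dz)\,ds$ gives $\int\!\int|\gamma|^{q'}\nu\,dt \le A_T^{\,1-\lambda}\big(\int\!\int|\gamma|^r\nu\,dt\big)^{\lambda}$; raising to the power $r/q'$ and applying Young's inequality with the conjugate exponents $a=q'/(2(1-\lambda))$ and $b=q'/(\lambda r)$ — which satisfy $1/a+1/b=\big(2(1-\lambda)+\lambda r\big)/q'=1$ by the very definition of $\lambda$ — collapses the product exactly onto $A_T^{r/2}$ and $\int\!\int|\gamma|^r\nu\,dt$. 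Since halving the exponent produces the dyadic chain $r, r/2, r/4,\dots$ that reaches $[2,4]$ in finitely many steps, where the base case terminates the induction, every intermediate term is controlled by the two target terms and the proof is complete.
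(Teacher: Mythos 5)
The paper does not actually prove this lemma: it is stated with a pointer to Mikulevicius and Pragarauskas \cite{mikulevicius12}, so any argument you supply is necessarily a different (self-contained) route. What you give is the standard Bichteler--Jacod/Kunita-type recursion, and its mechanics check out: localisation makes the stopped integrals honest $L^2$-martingales; the identification $[M]_T=\int_0^T\int_Z|\gamma_s(z)|^2N(ds,dz)$ uses that the atoms of $N$ almost surely occur at distinct times (the time-marginal of the intensity is nonatomic); the splitting $N=\tilde N+\nu(dz)\,dt$ gives $[M]_T=P_T+A_T$ with $\mathbb{E}A_T^{r/2}$ exactly the first target term; the recursion on $P$ at exponent $r/2$ with integrand $|\gamma|^2$ only ever invokes BDG at exponents $\geq 1$, where it is valid for c\`adl\`ag local martingales; and in the interpolation step your Young exponents are genuinely conjugate, since $1/a+1/b=\bigl(2(1-\lambda)+\lambda r\bigr)/q'=1$ and $a,b\geq 1$ precisely because $2<q'<r$. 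One half-line worth making explicit: to apply the inequality recursively to $P$ you need $\int_0^T\int_Z|\gamma_s(z)|^4\nu(dz)\,ds<\infty$ almost surely, which does not follow from the lemma's hypothesis alone; it does follow once you assume (as you may, the claim being trivial otherwise) that the right-hand side of \eqref{eq:bdg_jump} is finite, whence $A_T$ and $\int_0^T\int_Z|\gamma|^r\nu(dz)\,ds$ are a.s.\ finite and your own pathwise H\"older interpolation controls the intermediate quantity. This is a presentational omission, not a gap.

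There is, however, one genuine error, though it does not affect the main estimate: you have the closing remark backwards. The lemma asserts that for $1\leq r\leq 2$ the \emph{second} term, $K\,\mathbb{E}\int_0^T\int_Z|\gamma_t(z)|^r\nu(dz)\,dt$, can be dropped, i.e.\ $\mathbb{E}\sup_{t\leq T}|M_t|^r\leq K\,\mathbb{E}\bigl(\int_0^T\int_Z|\gamma_t(z)|^2\nu(dz)\,dt\bigr)^{r/2}$; this follows, for instance, from BDG combined with Lenglart-type domination of $[M]$ by its predictable compensator $\langle M\rangle_T=\int_0^T\int_Z|\gamma_t(z)|^2\nu(dz)\,dt$ (very much in the spirit of Lemma~\ref{prop:yor}). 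What you proved instead, via subadditivity of $x\mapsto x^{r/2}$ and the compensation formula, is $\mathbb{E}\sup_{t\leq T}|M_t|^r\leq C\,\mathbb{E}\int_0^T\int_Z|\gamma_t(z)|^r\nu(dz)\,dt$, which drops the \emph{first} term. Both inequalities are true, and fortunately the one you actually established is exactly the one your recursion consumes in its base case (applied to $P$ at exponent $r/2\in[1,2]$), so your proof of \eqref{eq:bdg_jump} for $r\geq 2$ stands intact; only your claim to have verified the lemma's final sentence is incorrect.
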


We now show the existence and uniqueness of solution to SPDE \eqref{eq:see_1}.
 

 
                                                                                                               \subsection{A priori Estimates} \label{sec:apriori}

We begin by obtaining some a priori estimates of the solution to SPDE \eqref{eq:see_1}.

\begin{theorem}\label{thm:apriori}
If $u$ is a solution of~\eqref{eq:see_1}, Assumptions A-\ref{ass:coer}, A-\ref{ass:groA} and A-\ref{ass: growth_gamma}  hold, then
\begin{equation}                                  \label{eq:as1}
\begin{split}
\sup_{t\in [0,T]}\mathbb{E}|u_t|_H^{p_0}
& + \sum_{i=1}^k \mathbb{E}\int_0^T|u_t|_H^{p_0-2}|u_t|_{V_i}^{\alpha_i} dt
\leq C\mathbb{E}\Big(|u_0|_H^{p_0}+\! \int_0^T\!\! f_s^\frac{p_0}{2}ds\Big) ,\\
& \sum_{i=1}^k\mathbb{E}\int_0^T|u_t|_{V_i}^{\alpha_i} dt \leq C\mathbb{E}\Big(|u_0|_H^2+\! \int_0^T\!\! f_s \,ds\Big)	\, .
\end{split}
\end{equation}
Moreover,
\begin{equation}                                  \label{eq:as2}
\mathbb{E}\sup_{t\in [0,T]}|u_t|_H^{p}\leq C\mathbb{E}\Big(|u_0|_H^{p_0}+ \int_0^T f_s^\frac{p_0}{2}ds\Big),
\end{equation}
with $p=2$ in case $p_0 = 2$ and with any $p\in [2,p_0)$ in case $p_0>2$, 
where $C$ depends only on $p_0,K,T$ and $\theta$.
\end{theorem}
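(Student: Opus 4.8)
The plan is to derive all three estimates from It\^o's formula for $|u_t|_H^{p_0}$, obtained by applying the It\^o formula for the square of the $H$-norm of Gy\"ongy and \v{S}i\v{s}ka and then composing with the power $\psi(y)=y^{p_0/2}$. Writing $A_s=\sum_{i=1}^kA_s^i$ and applying the $|\cdot|_H^2$ formula to the solution of \eqref{eq:see_1} yields
\[
|u_t|_H^2=|u_0|_H^2+\int_0^t\Big(2\langle A_s(u_s),u_s\rangle+\sum_{j=1}^\infty|B_s^j(u_s)|_H^2+\int_{\mathcal{D}^c}|\gamma_s(u_s,z)|_H^2\nu(dz)\Big)ds+m_t,
\]
where $m_t$ is a local martingale collecting the continuous term $2\sum_j\int_0^t(u_s,B_s^j(u_s))\,dW_s^j$ and the compensated jump integral with integrand $2(u_{s-},\gamma)+|\gamma|_H^2$. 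The decisive point in composing with $\psi$ is that the first-order term $\tfrac{p_0}{2}|u_s|_H^{p_0-2}\sum_j|B_s^j|_H^2$ and the second-order It\^o correction $\tfrac{p_0(p_0-2)}{2}|u_s|_H^{p_0-4}\sum_j(u_s,B_s^j)^2\le\tfrac{p_0(p_0-2)}{2}|u_s|_H^{p_0-2}\sum_j|B_s^j|_H^2$ combine to produce exactly the factor $(p_0-1)$ multiplying $\sum_j|B_s^j|_H^2$; this is precisely the combination appearing in A-\ref{ass:coer}, which is the structural reason for the constant $(p_0-1)$ there.

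Consequently the continuous drift of $|u_t|_H^{p_0}$ reduces to $\tfrac{p_0}{2}|u_s|_H^{p_0-2}$ times the combination $2\langle A_s,u_s\rangle+(p_0-1)\sum_j|B_s^j|_H^2+\int_{\mathcal{D}^c}|\gamma_s|_H^2\nu(dz)$ governed by A-\ref{ass:coer}, which bounds it by $\tfrac{p_0}{2}|u_s|_H^{p_0-2}\big(f_s+K|u_s|_H^2-\theta\sum_i|u_s|_{V_i}^{\alpha_i}\big)$. The jump compensator produced by the composition I would estimate by the second-order Taylor/convexity inequality, valid for $p_0\ge2$, which controls $|u_{s-}+\gamma|_H^{p_0}-|u_{s-}|_H^{p_0}-p_0|u_{s-}|_H^{p_0-2}(u_{s-},\gamma)$ by a multiple of $|u_{s-}|_H^{p_0-2}|\gamma|_H^2+|\gamma|_H^{p_0}$; integrating against $\nu$, the $|\gamma|_H^{p_0}$ part is bounded through A-\ref{ass: growth_gamma} by $f_s^{p_0/2}+K|u_s|_H^{p_0}$, while the $|u_s|_H^{p_0-2}\int|\gamma|_H^2\nu$ part is absorbed using the $\int|\gamma|_H^2\nu$ term available in A-\ref{ass:coer}, and cross terms such as $|u_s|_H^{p_0-2}f_s$ are split by Young's inequality into $\varepsilon|u_s|_H^{p_0}+C_\varepsilon f_s^{p_0/2}$. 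Taking expectations, the martingale part vanishing after the usual localization by stopping times, leaves
\[
\mathbb{E}|u_t|_H^{p_0}+c\,\mathbb{E}\int_0^t|u_s|_H^{p_0-2}\sum_i|u_s|_{V_i}^{\alpha_i}ds\le\mathbb{E}|u_0|_H^{p_0}+C\,\mathbb{E}\int_0^t|u_s|_H^{p_0}ds+C\,\mathbb{E}\int_0^T f_s^{p_0/2}ds,
\]
so Gronwall's inequality bounds $\sup_t\mathbb{E}|u_t|_H^{p_0}$ by the right-hand side of \eqref{eq:as1}, and reinserting this bound controls the $V_i$-integral term, completing the first line of \eqref{eq:as1}.

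For the second line of \eqref{eq:as1} I would repeat the argument with the plain $|\cdot|_H^2$ formula: since $p_0-1\ge1$, Assumption A-\ref{ass:coer} still dominates $2\langle A_s,u_s\rangle+\sum_j|B_s^j|_H^2+\int_{\mathcal{D}^c}|\gamma_s|_H^2\nu(dz)$, so after taking expectations one obtains $\mathbb{E}|u_t|_H^2+\theta\,\mathbb{E}\int_0^t\sum_i|u_s|_{V_i}^{\alpha_i}ds\le\mathbb{E}|u_0|_H^2+\mathbb{E}\int_0^t f_s\,ds+K\int_0^t\mathbb{E}|u_s|_H^2ds$; Gronwall bounds $\sup_t\mathbb{E}|u_t|_H^2$ and reinsertion yields the stated bound on the $V_i$-integral.

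Finally, for the supremum estimate \eqref{eq:as2} the central difficulty is that when $p_0>2$ one cannot close a Burkholder--Davis--Gundy-plus-absorption argument at the exponent $p_0$, because $\mathbb{E}\sup_t|u_t|_H^{p_0}$ is not known to be finite a priori. Here I would invoke Lemma~\ref{prop:yor}: running the $p_0$-estimate above up to an arbitrary bounded stopping time $\tau$ (a stopped martingale still having zero mean) gives $\mathbb{E}|u_\tau|_H^{p_0}\le K:=C\,\mathbb{E}\big(|u_0|_H^{p_0}+\int_0^T f_s^{p_0/2}ds\big)$ uniformly in $\tau$, whence Lemma~\ref{prop:yor} applied to $Y_t=|u_t|_H^{p_0}$ gives $\mathbb{E}\sup_t|u_t|_H^{p_0r}\le\tfrac{2-r}{1-r}K$ for every $r\in(0,1)$; choosing $r=p/p_0$ delivers \eqref{eq:as2} for any $p\in[2,p_0)$. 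In the borderline case $p_0=2$, where $r=1$ is inadmissible, I would instead take the supremum directly in the $|\cdot|_H^2$ identity, estimate the continuous martingale by the Burkholder--Davis--Gundy inequality and the jump martingale by Lemma~\ref{lem:BDG_jump}, bound both by $\tfrac12\mathbb{E}\sup_s|u_s|_H^2+C\,\mathbb{E}\int_0^T\big(\sum_j|B_s^j|_H^2+\int_{\mathcal{D}^c}|\gamma_s|_H^2\nu(dz)\big)ds$, use Remark~\ref{rem:groB} together with the integral bounds already established in \eqref{eq:as1}, and absorb the supremum term on the left after localization. The steps I expect to require the most care are the matching of the jump compensator against Assumptions A-\ref{ass:coer} and A-\ref{ass: growth_gamma}, and the use of Lemma~\ref{prop:yor} to bypass the unavailable $L^{p_0}$-absorption for the supremum.
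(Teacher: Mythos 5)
Your proposal reproduces the paper's architecture essentially step for step: the It\^o formula for $|u_t|_H^2$ from \cite{gyongy17} composed with $y\mapsto y^{p_0/2}$, the observation that the first-order term and the second-order correction $\tfrac{p_0(p_0-2)}{2}|u_s|_H^{p_0-4}\sum_j(u_s,B_s^j(u_s))^2\le \tfrac{p_0(p_0-2)}{2}|u_s|_H^{p_0-2}\sum_j|B_s^j(u_s)|_H^2$ combine to give exactly the factor $(p_0-1)$ of Assumption A-\ref{ass:coer}, the Gronwall argument at stopped times, the use of Lemma~\ref{prop:yor} at arbitrary bounded stopping times with $r=p/p_0$ to obtain \eqref{eq:as2} for $p_0>2$, the reduction to the $p_0=2$ form of coercivity for the second line of \eqref{eq:as1}, and the Burkholder--Davis--Gundy-plus-absorption argument for \eqref{eq:as2} when $p_0=2$. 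All of this matches the paper.

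The genuine gap is in your treatment of the jump correction when $p_0>2$. You compensate the Poisson integral first, so you must bound the compensator $\int_{\mathcal{D}^c}\big[|u_s+\gamma_s(u_s,z)|_H^{p_0}-|u_s|_H^{p_0}-p_0|u_s|_H^{p_0-2}(u_s,\gamma_s(u_s,z))\big]\nu(dz)$, whose Taylor estimate \eqref{eq:taylor} produces the term $C_{p_0}\,|u_s|_H^{p_0-2}\int_{\mathcal{D}^c}|\gamma_s(u_s,z)|_H^2\nu(dz)$. Your claim that this is ``absorbed using the $\int|\gamma|_H^2\nu$ term available in A-\ref{ass:coer}'' does not hold: that term enters A-\ref{ass:coer} with coefficient one, inside a combination which you have already spent in full on the first-order drift $\tfrac{p_0}{2}|u_s|_H^{p_0-2}\big(2\sum_i\langle A_s^i(u_s),u_s\rangle_i+(p_0-1)\sum_j|B_s^j(u_s)|_H^2+\int_{\mathcal{D}^c}|\gamma_s(u_s,z)|_H^2\nu(dz)\big)$. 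To handle the leftover $C_{p_0}$-multiple you would have to invoke A-\ref{ass:coer} with a larger multiplier, which leaves behind $-2C_{p_0}|u_s|_H^{p_0-2}\sum_i\langle A_s^i(u_s),u_s\rangle_i$, a term of no definite sign; estimating it via A-\ref{ass:groA} yields $C|u_s|_H^{p_0-2}(1+|u_s|_H^\beta)\sum_i|u_s|_{V_i}^{\alpha_i}$, whose constant (and, when $\beta>0$, whose power of $|u_s|_H$) is too large to be absorbed by the good term $-\theta\tfrac{p_0}{2}|u_s|_H^{p_0-2}\sum_i|u_s|_{V_i}^{\alpha_i}$. No standalone growth hypothesis on $\int_{\mathcal{D}^c}|\gamma|_H^2\nu(dz)$ is assumed in this framework --- avoiding hypotheses of the type \eqref{eq:B_brz} is precisely the point of the paper --- so no absorption is available. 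The paper's route is different at exactly this spot: it never compensates the Taylor remainder, but keeps $I_2$ under $N(ds,dz)$, applies Young's inequality pointwise in $z$ to bound the integrand by $C\big(|u_s|_H^{p_0}+|\gamma_s(u_s,z)|_H^{p_0}\big)$ (see \eqref{eq:I_2}), and only then takes expectations, so that Assumption A-\ref{ass: growth_gamma} controls $\int_{\mathcal{D}^c}|\gamma_s(u_s,z)|_H^{p_0}\nu(dz)$ and Gronwall closes (see \eqref{eq:truegronwall}); even there, the $|u_s|_H^{p_0}$-part of that bound tacitly uses finiteness of $\nu$ on the relevant set, which shows this is the truly delicate step of the whole proof, and your version of it, as stated, does not go through.
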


\begin{proof}
Let $u$ be a solution of \eqref{eq:see_1}  in the sense of Definition~
 \ref{def:sol}. 
In order to obtain higher moment a priori estimates for solutions to \eqref{eq:see_1}, we define for each $n\in\mathbb{N}$,
\begin{equation}
\sigma_n:=\inf\{t\in[0,T]:|u_t|_H>n\}\wedge T \label{eq:stoptime}.
\end{equation}
The solution $u$, being an adapted and c\`adl\`ag $H$-valued process, is bounded on every compact interval. Thus $(\sigma_n)_{n\in\mathbb{N}}$  is a sequence of stopping times converging to 
$T, \,\, \mathbb{P}$- a.s. and $\mathbb{P}\{\sigma_n<T\}=0$ as $n\to~\infty$. 
Applying It\^{o}'s formula for the square of the norm to \eqref{eq:see_1}, see \cite[Theorem 2.1]{gyongy17} and replacing $t$ by $t \wedge \sigma_n$, we get almost surely for all $t\in [0,T]$ and $n\in\mathbb{N}$
\begin{equation} \label{eq:itosq} 
\begin{split}
|& u_{t\wedge\sigma_n} |_H^2=|u_0|_H^2+\int_0^{t\wedge\sigma_n}\Big(2 \sum_{i=1}^k \langle A_s^i(u_s),u_s\rangle_i+\sum_{j=1}^\infty|B_s^j(u_s)|_H^2 \Big)ds\\
&  + 2\sum_{j=1}^\infty\int_0^{t\wedge\sigma_n}(u_s, B_s^j(u_s))dW_s^j  + \int_0^{t\wedge\sigma_n} \int_{\mathcal{D}^c} 2(u_s,\gamma_s(u_s,z)) \tilde{N}(ds,dz) \\
& +\int_0^{t\wedge\sigma_n} \int_{\mathcal{D}^c}|\gamma_s(u_s,z)|_H^2
N(ds,dz) \, . 
\end{split}
\end{equation}
Using the fact $\tilde{N}(dt,dz):=N(dt,dz)-\nu(dz)dt$, we get
\begin{equation} \label{eq:itosq_stopped} 
\begin{split}
|u_{t\wedge\sigma_n}|_H^2 =  &|u_0|_H^2+\int_0^{t\wedge\sigma_n}\Big(2 \sum_{i=1}^k \langle A_s^i(u_s),u_s\rangle_i+\sum_{j=1}^\infty|B_s^j(u_s)|_H^2 \\
& + \int_{\mathcal{D}^c}|\gamma_s(u_s,z)|_H^2 \nu(dz)\Big)ds + 2\sum_{j=1}^\infty\int_0^{t\wedge\sigma_n}(u_s, B_s^j(u_s))dW_s^j  \\
&+ \int_0^{t\wedge\sigma_n} \int_{\mathcal{D}^c} \Big(2(u_s,\gamma_s(u_s,z))+|\gamma_s(u_s,z)|_H^2 \Big)\tilde{N}(ds,dz) \\
\end{split}
\end{equation}
almost surely for all $t\in [0,T]$ and $n\in\mathbb{N}$. 
Notice that this is a $1$-dimensional It\^{o} process. Thus, by It\^{o}'s formula, 
\begin{equation*}
\begin{split}
|& u_{t\wedge\sigma_n} |_H^{p_0}= |u_0|_H^{p_0}+\frac{p_0}{2}\int_0^{t\wedge\sigma_n}|u_s|_H^{p_0-2}\Big(2 \sum_{i=1}^k \langle A_s^i(u_s),u_s\rangle _i +\sum_{j=1}^\infty|B_s^j(u_s)|_H^2 \\
& + \int_{\mathcal{D}^c}|\gamma_s(u_s,z)|_H^2 \nu(dz)\Big)\, ds + p_0\int_0^{t\wedge\sigma_n}|u_s|_H^{p_0-2}\sum_{j=1}^\infty(u_s, B_s^j(u_s))dW_s^j \\
&  +\frac{p_0}{2} \int_0^{t\wedge\sigma_n}\int_{\mathcal{D}^c} |u_s|_H^{p_0-2} \Big[2(u_s,\gamma_s(u_s,z))+|\gamma_s(u_s,z)|_H^2  \Big]\tilde{N}(ds,dz)\\
& +\frac{p_0(p_0-2)}{2}\int_0^{t\wedge\sigma_n}|u_s|_H^{p_0-4}\sum_{j=1}^\infty|(u_s, B_s^j(u_s))|^2\, ds\\
& +\int_0^{t\wedge\sigma_n} \int_{\mathcal{D}^c} \Big[\big||u_s|_H^2 +2(u_s,\gamma_s(u_s,z))+|\gamma_s(u_s,z)|_H^2\big|^\frac{p_0}{2}-|u_s|_H^{p_0} \\
& -\frac{p_0}{2}|u_s|_H^{p_0-2}\big[2(u_s,\gamma_s(u_s,z))+|\gamma_s(u_s,z)|_H^2\big]\Big]N(ds,dz) 
\end{split}
\end{equation*}
almost surely for all $t\in [0,T]$ and $n\in\mathbb{N}$. Again, using the fact $\tilde{N}(dt,dz)=N(dt,dz)-\nu(dz)dt $, we get 
\begin{equation}\label{eq:ito_p0}
\begin{split}
| u_{t\wedge\sigma_n} |_H^{p_0}=  |u_0|_H^{p_0} & + I_1+I_2 + p_0 \sum_{j=1}^\infty \int_0^{t\wedge\sigma_n}|u_s|_H^{p_0-2}(u_s, B_s^j(u_s))dW_s^j \\
&  + p_0 \int_0^{t\wedge\sigma_n}\int_{\mathcal{D}^c} |u_s|_H^{p_0-2} (u_s,\gamma_s(u_s,z))   \tilde{N}(ds,dz)\\
\end{split}
\end{equation}
almost surely for all $t\in [0,T]$ and $n\in\mathbb{N}$, where
\begin{equation*}
\begin{split}
I_1 := &\frac{p_0}{2}\int_0^{t\wedge\sigma_n}|u_s|_H^{p_0-2}\Big(2 \sum_{i=1}^k \langle A_s^i(u_s),u_s\rangle _i +\sum_{j=1}^\infty|B_s^j(u_s)|_H^2 \Big)\, ds \\
& +\frac{p_0(p_0-2)}{2}\int_0^{t\wedge\sigma_n}|u_s|_H^{p_0-4}\sum_{j=1}^\infty|(u_s, B_s^j(u_s))|^2\, ds\\
\text{and} \\
I_2 := & \int_0^{t\wedge\sigma_n} \int_{\mathcal{D}^c} \Big[|u_s+\gamma_s(u_s,z)|_H^{p_0} -|u_s|_H^{p_0}  -p_0|u_s|_H^{p_0-2}(u_s,\gamma_s(u_s,z))\Big]N(ds,dz) \, .
\end{split}
\end{equation*}

Using Cauchy-Schwarz inequality, Assumption A-\ref{ass:coer} and Young's inequality, we get
almost surely for all $t\in [0,T]$ and $n\in\mathbb{N}$
\begin{equation}  \label{eq:I_1}
\begin{split}
I_1 & \leq \frac{p_0}{2}\int_0^{t\wedge\sigma_n}|u_s|_H^{p_0-2}\Big(2 \sum_{i=1}^k \langle A_s^i(u_s),u_s\rangle_i +(p_0-1)\sum_{j=1}^\infty|B_s^j(u_s)|_H^2 \Big)ds \\
& \leq \frac{p_0}{2}\int_0^{t\wedge\sigma_n}|u_s|_H^{p_0-2}\Big(f_s+K|u_s|_H^2-\theta\sum_{i=1}^k|u_s|_{V_i}^{\alpha_i}\Big) ds\\
& \leq \int_0^{t\wedge\sigma_n} \Big(f_s^\frac{p_0}{2} + \frac{p_0(K+1)-2}{2} |u_s|_H^{p_0}-\theta \frac{p_0}{2}\sum_{i=1}^k |u_s|_H^{p_0-2} |u_s|_{V_i}^{\alpha_i} \Big) ds \,.
\end{split}
\end{equation}
We now proceed to estimate $I_2$. Notice that due to Taylor's formula on the map $t \mapsto |x+ty|_H^p$, for any $x, y \in H$ and $p \geq 2$, we get
\[
|x + y|_H^p - |x|_H^p  = \int_0^1 \frac{d}{dt}|x+ty|_H^p dt
\]
and therefore,
\begin{equation}\label{eq:taylor}
\begin{split}
\big| |x + y|_H^p - & |x|_H^p - p|x|_H^{p-2}(x,y)\big|  = p\Big|\int_0^1 \big[|x+ty|_H^{p-2}(x+ty,y)- |x|_H^{p-2}(x,y)\big] dt \Big| \\
& \leq C_p \int_0^1\big(|x|_H^{p-2} + |y|_H^{p-2}\big)|y|_H^2 \, t dt \leq C_p(|x|_H^{p-2}|y|_H^2 + |y|_H^p)\, .
\end{split}
\end{equation}
Now, taking $x=u_s$, $y=\gamma_s(u_s,z)$ and $p=p_0$ in \eqref{eq:taylor}, we get
 \begin{equation*}
 \begin{split}
 &  |u_s+\gamma_s(u_s,z)|_H^{p_0} -|u_s|_H^{p_0}  -p_0|u_s|_H^{p_0-2}(u_s,\gamma_s(u_s,z))  \\ 
 &  \leq C\Big(|u_s|_H^{p_0-2}|\gamma_s(u_s,z)|_H^2+|\gamma_s(u_s,z)|_H^{p_0} \Big) 
 \end{split}
 \end{equation*}
and hence using Young's inequality, we get for all $t\in [0,T]$ and $n\in\mathbb{N}$
\begin{equation} \label{eq:I_2}
\begin{split}
I_2 \leq &  C \int_0^{t\wedge\sigma_n} \int_{\mathcal{D}^c} \Big[|u_s|_H^{p_0-2}|\gamma_s(u_s,z)|_H^2 + |\gamma_s(u_s,z)|_H^{p_0}\Big]N(ds,dz) \\
\leq & C \int_0^{t\wedge\sigma_n} \int_{\mathcal{D}^c} \Big[ |u_s|_H^{p_0}+|\gamma_s(u_s,z)|_H^{p_0}\big]N(ds,dz)\, .
\end{split}
\end{equation}
 Using \eqref{eq:I_1} and \eqref{eq:I_2}, we obtain from  \eqref{eq:ito_p0}
 \begin{equation}\label{eq:ito_p_0_Levy}
 \begin{split}
|u_{t\wedge\sigma_n} & |_H^{p_0} +\theta \frac{p_0}{2}\sum_{i=1}^k \int_0^{t\wedge\sigma_n}|u_s|_H^{p_0-2}|u_s
|_{V_i}^{\alpha_i} ds  \\
\leq & |u_0|_H^{p_0}+ \int_0^{t\wedge\sigma_n} f_s^\frac{p_0}{2}ds+p_0 \sum_{j=1}^\infty \int_0^{t\wedge\sigma_n} |u_s|_H^{p_0-2}(u_s,B_s^j(u_s))dW_s^j \\
& + p_0 \int_0^{t\wedge\sigma_n}\int_{\mathcal{D}^c} |u_s|_H^{p_0-2} (u_s,\gamma_s(u_s,z)) \tilde{N}(ds,dz)\\
& + C \int_0^{t\wedge\sigma_n}\int_{\mathcal{D}^c} \big[|u_s|_H^{p_0}+ |\gamma_s(u_s,z)|_H^{p_0} \big]N(ds,dz)
 \end{split}
 \end{equation}
almost surely for all $t\in [0,T]$ and $n\in\mathbb{N}$. We now aim to apply Lemma \ref{prop:yor}. 
To that end 
let $\tau$ be some bounded stopping time.
 Then in view of Remark \ref{rem:groB} and the fact that $u$ is a solution of equation \eqref{eq:see_1}, it follows that for all $t\in [0,T]$ and $n\in\mathbb{N}$
\[
\mathbb{E}\sum_{j=1}^\infty\int_0^{t\wedge\sigma_n}\mathbf{1}_{\{s\leq \tau\}}|u_s|_H^{p_0-2}(u_s,B_s^j(u_s))dW_s^j=0 
\]
and
\[
\mathbb{E}\int_0^{t\wedge\sigma_n}\int_{\mathcal{D}^c}\mathbf{1}_{\{s\leq \tau\}}|u_s|_H^{p_0-2} (u_s,\gamma_s(u_s,z)) \tilde{N}(ds,dz)=0 \, .
\]
Therefore, replacing $t\wedge \sigma_n$ by $t\wedge\sigma_n \wedge \tau$ in \eqref{eq:ito_p_0_Levy}, taking expectation and using Assumption A-\ref{ass: growth_gamma}                          , we obtain for all $t\in [0,T]$ and $n\in\mathbb{N}$
\begin{equation}  \label{eq:truegronwall}
\begin{split}
\mathbb{E}& |u_{t\wedge\sigma_n \wedge \tau}  |_H^{p_0}+\theta\frac{p_0}{2} \sum_{i=1}^k \mathbb{E}\int_0^{t\wedge \sigma_n\wedge\tau}\!\!|u_s|_H^{p_0-2}|u_s|_{V_i}^{\alpha_i} ds  \\
\leq & \mathbb{E}|u_0|_H^{p_0}+ \mathbb{E}\int_0^{T}f_s^\frac{p_0}{2}ds + C\mathbb{E}\int_0^{t\wedge\sigma_n \wedge \tau}\int_{\mathcal{D}^c} \big[|u_s|_H^{p_0}+ |\gamma_s(u_s,z)|_H^{p_0} \big]\nu(dz)ds \\
 \leq & \mathbb{E}|u_0|_H^{p_0}+ C\mathbb{E}\int_0^{T} f_s^\frac{p_0}{2}ds+C\mathbb{E}\int_0^{t}|u_{s\wedge\sigma_n\wedge\tau}|_H^{p_0}ds
\end{split}
\end{equation}
From this Gronwall's lemma yields
\begin{align}
\label{eq:apriori-after-gronwall}
\mathbb{E}|u_{t\wedge\sigma_n\wedge\tau}|_H^{p_0}
\leq C \mathbb{E}\Big(|u_0|_H^{p_0}+ \int_0^T f_s^\frac{p_0}{2}ds\Big)
\end{align}
for all $t\in[0,T]$ and $n\in\mathbb{N}$.
Letting $n\to \infty$ and using Fatou's lemma, we obtain
\begin{align*} \label{eq:beforeYor}
\mathbb{E}|u_{t\wedge\tau}|_H^{p_0}
&\leq C \mathbb{E}\Big(|u_0|_H^{p_0}+ \int_0^T f_s^\frac{p_0}{2}ds\Big)
\end{align*}
for all $t\in[0,T]$. 
Using Lemma~\ref{prop:yor}, with the process 
$\left(|u_{t}|_H^{p_0}\right)_{t\geq 0}$, we get 
\[
\mathbb{E}\sup_{t\in[0,T]}|u_t|_H^{p_0r}\leq \frac{2-r}{1-r}C\mathbb{E}\Big(|u_0|_H^{p_0}+ \int_0^T f_s^\frac{p_0}{2}ds\Big)
\]
for any $r\in (0,1)$, which proves~\eqref{eq:as2} in case $p_0>2$.

In order to prove~\eqref{eq:as1},
the estimate~\eqref{eq:apriori-after-gronwall} is used in the 
right-hand side of~\eqref{eq:truegronwall} with $\tau=T$ and 
with $n\to \infty$. 
We thus obtain, 
\begin{equation}
\mathbb{E}|u_t|_H^{p_0}
+\theta\frac{p_0}{2}\sum_{i=1}^k \mathbb{E}\int_0^t|u_s|_H^{p_0-2}|u_s|_{V_i}^{\alpha_i} ds
\leq C\mathbb{E}\Big(|u_0|_H^{p_0}+ \int_0^T f_s^\frac{p_0}{2}ds\Big) \label{eq:apriori}
\end{equation}
for all $t\in [0,T]$.
If Assumption A-\ref{ass:coer} holds for some $p_0\geq \beta+2$, then it holds for $p_0=2$ as well. 
Thus, from \eqref{eq:itosq} we obtain
\begin{align*} \label{eq:vbound}
\mathbb{E}|u_t|_H^2+\theta \sum_{i=1}^k \mathbb{E}\int_0^t |u_s|_{V_i}^{\alpha_i} ds \leq \mathbb{E}\Big(|u_0|_H^2+\int_0^Tf_sds\Big)+K\mathbb{E}\int_0^t|u_s|_H^2ds
\end{align*}
for all $t\in[0,T]$. Application of Gronwall's lemma yields
\[
\sup_{t\in [0,T]}\mathbb{E}|u_t|_H^2 \leq C \mathbb{E}\Big(|u_0|_H^2+\int_0^Tf_sds\Big)\, ,
\]
which in turn gives
\[
\theta \sum_{i=1}^k \mathbb{E}\int_0^T |u_s|_{V_i}^{\alpha_i} ds \leq C\mathbb{E}\Big(|u_0|_H^{2}+\int_0^T f_s\,ds\Big)
\]
and hence~\eqref{eq:as1} holds.

To complete the proof it remains to show~\eqref{eq:as2} in case $p_0=2$. 
Considering the sequence of stopping times $\sigma_n$ 
defined in \eqref{eq:stoptime} and using Remark~\ref{rem:groB} 
along with Definition \ref{def:sol}, we observe that the 
stochastic integrals appearing in the right-hand side of \eqref{eq:itosq}
are martingales for each $n \in \mathbb{N}$.
Thus using the Burkholder--Davis--Gundy inequality and Cauchy--Schwartz inequality, we obtain for each $n\in\mathbb{N}$
\begin{equation} \label{eq:p2bdg_weiner}
\begin{split}
\mathbb{E}\sup_{t\in[0, T]}&\Big|\sum_{j=1}^\infty \int_0^{t\wedge \sigma_n}(u_s, B_s^j(u_s))dW_s^j\Big| \\
&\leq 4\mathbb{E}\Big( \sum_{j=1}^\infty \int_0^{T\wedge \sigma_n}|(u_s, B_s^j(u_s))|^2 ds\Big)^\frac{1}{2} \\
&\leq 4\mathbb{E}\Big( \sum_{j=1}^\infty \int_0^{T\wedge \sigma_n}|u_s|_H^2 |B_s^j(u_s)|_H^2 ds\Big)^\frac{1}{2} \, .
\end{split}
\end{equation}
Similarly, for each $n\in\mathbb{N}$
\begin{equation} \label{eq:p2bdg_levy}
\begin{split}
\mathbb{E}\sup_{t\in[0, T]}&\Big| \int_0^{t\wedge \sigma_n}\int_{\mathcal{D}^c}(u_s, \gamma_s(u_s))\tilde{N}(ds,dz)\Big| \\
&\leq C\mathbb{E}\Big(\int_0^{T\wedge \sigma_n}\int_{\mathcal{D}^c}|(u_s, \gamma_s(u_s))|^2 \nu(dz)ds\Big)^\frac{1}{2} \\
&\leq C\mathbb{E}\Big(  \int_0^{T\wedge \sigma_n} \int_{\mathcal{D}^c}|u_s|_H^2 |\gamma_s(u_s)|_H^2 \nu(dz)ds\Big)^\frac{1}{2}  \, .
\end{split}
\end{equation}
Thus \eqref{eq:p2bdg_weiner}, \eqref{eq:p2bdg_levy} along with Remark \ref{rem:groB} and Young's inequality give
\begin{equation} \label{eq:p2bdg}
\begin{split}
&\mathbb{E}\sup_{t\in[0, T]}\Big|\sum_{j=1}^\infty \int_0^{t\wedge \sigma_n}(u_s, B_s^j(u_s))dW_s^j\Big|+\mathbb{E}\sup_{t\in[0, T]}\Big| \int_0^{t\wedge \sigma_n}\int_{\mathcal{D}^c}(u_s, \gamma_s(u_s))\tilde{N}(ds,dz)\Big| \\
& \leq C\mathbb{E}\Big(\sup_{t\in[0, T]}|u_{t\wedge\sigma_n}|_H^2  \int_0^{T\wedge \sigma_n}\big(f_s+|u_s|_H^2+ \sum_{i=1}^k |u_s|_{V_i}^{\alpha_i} \big) ds\Big)^\frac{1}{2} \\
&\leq \epsilon \mathbb{E}\sup_{t\in[0, T]}|u_{t\wedge\sigma_n}|_H^2+C\mathbb{E}\int_0^{T\wedge \sigma_n}\big(f_s+|u_s|_H^2+ \sum_{i=1}^k |u_s|_{V_i}^{\alpha_i} \big) ds 
\end{split}
\end{equation}
for each $n\in\mathbb{N}$.
Moreover, taking supremum and  then expectation in \eqref{eq:itosq} and using Assumption A-\ref{ass:coer} along with \eqref{eq:p2bdg}, we obtain  for each $n\in\mathbb{N}$
\begin{align*}
\mathbb{E}\sup_{t\in[0, T]}|u_{t\wedge \sigma_n}|_H^2  \leq & \epsilon \mathbb{E}\sup_{t\in[0, T]}|u_{t\wedge\sigma_n}|_H^2 \\
&+C\Big(\mathbb{E}|u_0|_H^2+ \mathbb{E}\int_0^{T}\!\! f_s\,ds+ \sum_{i=1}^k\mathbb{E}\int_0^{T} |u_s|_{V_i}^{\alpha_i} ds+\sup_{t\in[0, T]}\mathbb{E}|u_{t}|_H^2\Big). 
\end{align*}
Finally, by choosing $\epsilon$ small and using \eqref{eq:as1} for $p_0=2$, we obtain for each $n\in\mathbb{N}$
\begin{align}
\mathbb{E}\sup_{t\in[0, T]}|u_{t\wedge \sigma_n}|_H^2 \leq C\Big(\mathbb{E}|u_0|_H^2+ \mathbb{E}\int_0^{T}\!\! f_s\,ds\Big) \notag
\end{align}
which on allowing $n\to\infty$ and using Fatou's lemma finishes the proof.
\end{proof} 

Note that we can obtain existence and uniqueness results even if Assumption  A-\ref{ass:coer} is replaced by the following assumption.
\begin{assumption}\label{ass:coer_seminorm} 
For all $x$ in $V$,
\[
2\sum_{i=1}^k\langle A_t^i(x), x \rangle_i +(p_0-1)\sum_{j=1}^\infty |B^j_t(x)|_H^2 + \theta \sum_{i=1}^k[x]_{V_i}^{\alpha_i} +\int_{\mathcal{D}^c}|\gamma_t(x,z)|_H^2 \nu(dz)\leq f_t+K|x|^2_H \, ,
\]
where, $\alpha_i<p_0$ for all $i$ and $[\cdot]_{V_i}$ is a seminorm on the space $V_i$ such that
\[
|\cdot|_{V_i} \leq |\cdot|_H + [\cdot]_{V_i} \,. 
\]
\end{assumption}
In next remark we show that we obtain apriori estimates similar to \eqref{eq:as1} even if Assumption A-\ref{ass:coer} is replaced by A-\ref{ass:coer_seminorm} and then rest of the argument for showing existence and uniqueness of solution to \eqref{eq:see_1} will remain the same. 
\begin{remark}\label{rem:seminorm}
If Assumption A-\ref{ass:coer} is replaced by the A-\ref{ass:coer_seminorm},
then replacing $|u_t|_{V_i}^{\alpha_i}$ by $[u_t]_{V_i}^{\alpha_i}$ everywhere in the proof of Theorem~\ref{thm:apriori}, we obtain
\[
\sum_{i=1}^d \mathbb{E}\int_0^T [u_s^m]_{V_i}^{\alpha_i} ds \leq C\mathbb{E}\Big(|u_0^m|_H^{2}+\int_0^T f_s \,ds\Big)
\]
and 
\[
\mathbb{E}\int_0^T|u_s^m|_{L^2}^{\alpha_i}ds \leq T\mathbb{E}\sup_{s\in [0,T]}|u_s^m|_{L^2}^{\alpha_i}\leq C\mathbb{E}\Big(|u_0^m|_H^{p_0}+ \int_0^T f_s^\frac{p_0}{2}ds\Big)\, 
\]
since  $\alpha_i<p_0$ for all $i$. Thus, 
\begin{equation*}
\begin{split}
\sum_{i=1}^d\mathbb{E}\int_0^T|u_s^m|_{V_i}^{\alpha_i}ds & \leq \sum_{i=1}^d C \Big(\mathbb{E}\int_0^T|u_s^m|_{L^2}^{\alpha_i}ds + \mathbb{E}\int_0^T [u_s^m]_{V_i}^{\alpha_i} ds\Big) \\
& \leq  C\mathbb{E}\Big(|u_0^m|_H^{p_0}+ \int_0^T f_s^\frac{p_0}{2}ds + |u_0^m|_H^{2}+ \int_0^T f_sds\Big)
\end{split}
\end{equation*}
giving all the desired a priori estimates for the solution.
\end{remark}

\subsection{Uniqueness of Solution} \label{sec:unique}
Before stating the result about uniqueness of solution to stochastic evolution equation  \eqref{eq:see_1}, we observe the following.

We note that right hand side in the Assumption A-~\ref{ass:lmon} can be replaced by 
\[
\Big[L \Big(1+\sum_{i=1}^k|\bar{x}|^{\alpha_i}_{V_i}\Big)(1+|\bar{x}|^\beta_H)\Big]|x-\bar{x}|^2_H
\]
for some constant $L$. We use this $L$ in the remaining article.
\begin{defn}
\label{def psi}
Let $\Psi$ be defined as the collection of
$V$-valued and $\mathscr{F}_t$-adapted processes $\psi$ satisfying
\[
\int_0^T\!\!\!\rho(\psi_s)ds< \infty\,\,\, \text{ a.s.}\,, 
\]
where 
\begin{equation*} \label{eq rho}
\rho(x):=L\Big(1+ \sum_{i=0}^k|x|_{V_i}^{\alpha_i}\Big)(1+|x|_H^\beta)	
\end{equation*}
for all $x\in V$.
\end{defn}

Note that if $u$ is a solution to~\eqref{eq:see_1} then $u\in \Psi$. 

\begin{remark}\label{rem:welldef}
For any $\psi \in \Psi$ and $v\in L^2(\Omega ,D([0,T]; H))$,
\begin{align}
\mathbb{E}\Big[\int_0^te^{-\int_0^s \rho(\psi_r)dr}&\rho(\psi_s)|v_s|_H^2ds\Big] \leq \mathbb{E}\sup_{s\in[0, t]}|v_s|_H^2\int_0^te^{-\int_0^s \rho(\psi_r)dr} \rho(\psi_s)ds \notag \\
&=\mathbb{E}\sup_{s\in[0, t]}|v_s|_H^2[1-e^{-\int_0^t \rho(\psi_r)dr}] \leq \mathbb{E}\sup_{s\in[0, t]}|v_s|_H^2 <\infty. \notag
\end{align}
\end{remark}
This remark justifies the existence of the bounded variation integrals 
appearing in the proof of uniqueness that follows.

\begin{theorem} \label{thm:unique}
Let Assumptions A-\ref{ass:lmon} to A-\ref{ass: growth_gamma} hold
and $u_0,\bar{u}_0\in L^{p_0}(\Omega; H)$. 
If $u$ and $\bar{u}$ are two solutions of \eqref{eq:see_1} with $u_0=\bar{u}_0 \,\,\, \mathbb{P}$-a.s., then the processes $u$ and $\bar{u}$ are indistinguishable, i.e. 
\begin{equation}
 \mathbb{P}\Big(\sup_{t\in[0,T]}|u_t-\bar{u}_t|_H=0\Big)=1.             \notag
\end{equation}
\end{theorem}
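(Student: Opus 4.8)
The plan is to prove uniqueness via a pathwise argument using the local monotonicity assumption A-\ref{ass:lmon} combined with a random time-change (exponential weight) trick, since the monotonicity constant is not uniform but grows with $\rho(\bar u_s)$. First I would set $v:=u-\bar u$ and apply the It\^o formula for the square of the $H$-norm (\cite[Theorem 2.1]{gyongy17}) to the difference $v$, noting that $v$ solves the equation obtained by subtracting the two copies of \eqref{eq:see_1}, so the drift, diffusion and jump coefficients of $v$ are the differences $A_s^i(u_s)-A_s^i(\bar u_s)$, $B_s^j(u_s)-B_s^j(\bar u_s)$ and $\gamma_s(u_s,z)-\gamma_s(\bar u_s,z)$. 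Since $u_0=\bar u_0$ a.s. the initial term vanishes.

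The key device is to apply It\^o's formula not to $|v_t|_H^2$ directly but to the weighted process $e^{-\int_0^t\rho(\bar u_s)\,ds}|v_t|_H^2$, where $\rho$ is the functional from Definition~\ref{def psi}; this is legitimate because $\bar u\in\Psi$, so $\int_0^T\rho(\bar u_s)\,ds<\infty$ a.s., and Remark~\ref{rem:welldef} guarantees that the resulting bounded-variation integral is finite. The product/It\^o rule produces a drift term of the form
\[
e^{-\int_0^s\rho(\bar u_r)dr}\Big[2\sum_{i=1}^k\langle A_s^i(u_s)-A_s^i(\bar u_s),v_s\rangle_i+\sum_{j=1}^\infty|B_s^j(u_s)-B_s^j(\bar u_s)|_H^2+\int_{\mathcal{D}^c}|\gamma_s(u_s,z)-\gamma_s(\bar u_s,z)|_H^2\nu(dz)-\rho(\bar u_s)|v_s|_H^2\Big]ds,
\]
plus the local martingale parts coming from the $W^j$ and $\tilde N$ integrals. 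By the (reformulated, with single constant $L$) local monotonicity A-\ref{ass:lmon}, the bracketed quantity is bounded above by $\rho(\bar u_s)|v_s|_H^2-\rho(\bar u_s)|v_s|_H^2=0$, so the entire bounded-variation drift of the weighted process is nonpositive pathwise. This is the heart of the argument: the exponential weight exactly cancels the non-uniform monotonicity modulus.

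Next I would stop at the stopping times $\sigma_n$ from \eqref{eq:stoptime} (adapted to both $u$ and $\bar u$, or their minimum) to render the stochastic integrals genuine martingales, take expectations, and conclude that $\mathbb{E}\big[e^{-\int_0^{t\wedge\sigma_n}\rho(\bar u_r)dr}|v_{t\wedge\sigma_n}|_H^2\big]\le 0$ for all $t$, hence this expectation is $0$. Letting $n\to\infty$ and using that the weight is strictly positive and bounded by $1$, together with monotone/dominated convergence, forces $|v_{t}|_H=0$ a.s. for each fixed $t$. To upgrade pointwise-in-$t$ equality to indistinguishability, I would use that both $u$ and $\bar u$ are c\`adl\`ag in $H$: equality on the dense countable set of rationals in $[0,T]$ together with right-continuity of paths gives $\sup_{t\in[0,T]}|u_t-\bar u_t|_H=0$ almost surely. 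The main obstacle is handling the jump (Poisson) contribution in the It\^o expansion of $e^{-\int_0^t\rho(\bar u_s)ds}|v_t|_H^2$: one must verify that the compensated increment $\int_{\mathcal{D}^c}\big(|v_s+(\gamma_s(u_s,z)-\gamma_s(\bar u_s,z))|_H^2-|v_s|_H^2\big)\tilde N(ds,dz)$ is a true martingale after stopping and that its compensator contributes exactly the $\int_{\mathcal{D}^c}|\gamma_s(u_s,z)-\gamma_s(\bar u_s,z)|_H^2\nu(dz)$ term absorbed by A-\ref{ass:lmon}; this requires the Lipschitz-type control \eqref{eq:gamma_1} (in the abstract setting, the jump part of A-\ref{ass:lmon}) to guarantee integrability so the martingale vanishes in expectation.
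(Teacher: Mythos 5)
Your proposal is correct and follows essentially the same route as the paper's proof: applying the It\^o formula from \cite{gyongy17} to the weighted process $e^{-\int_0^t\rho(\bar u_s)ds}|u_t-\bar u_t|_H^2$, using the (reformulated) local monotonicity Assumption A-\ref{ass:lmon} to make the drift nonpositive, stopping with the minimum of the stopping times for $u$ and $\bar u$ (the paper's \eqref{eq:stoptime_uniqueness}), taking expectations, letting $n\to\infty$ via Fatou's lemma, and invoking the c\`adl\`ag property to pass from fixed-time equality to indistinguishability. The only cosmetic differences (Fatou versus monotone/dominated convergence in the limit, and spelling out the rationals-plus-right-continuity argument) do not change the substance.
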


\begin{proof} Consider two solutions $u$ and $\bar{u}$ of \eqref{eq:see_1}. Thus,
\begin{equation} \label{eq:unique_sol}
\begin{split}
u_t - \bar u_t  = & \sum_{i=1}^k\int_0^t \left(A_s^i(u_s)-A_s^i(\bar{u}_s)\right)\,ds
+\sum_{j=1}^\infty \int_0^t \left(B_s^j(u_s)-B_s^j(\bar{u}_s)\right)\,dW_s^j  \\
&+\int_0^t\int_{\mathcal{D}^c}(\gamma_s(u_s,z)-\gamma_s(\bar{u}_s,z))\tilde{N}(ds,dz)
\end{split}
\end{equation}
almost surely for all $ t \in [0,T]$.
Using the product rule and the It\^{o}'s formula from \cite{gyongy17}, we obtain 
\begin{equation} \label{eq:product}
\begin{split}
d & \Big(e^{-\int_0^t \rho(\bar{u}_s)\,ds}|u_t-\bar{u}_t|_H^2 \Big)= e^{-\int_0^t \rho(\bar{u}_s)ds}\big[d|u_t-\bar{u}_t|_H^2-\rho(\bar{u}_t)|u_t-\bar{u}_t|_H^2\,dt\big]\\
= &  e^{-\int_0^t \rho(\bar{u}_s)ds}\bigg[\Big(2 \sum_{i=1}^k \langle A_t^i(u_t)-A_t^i(\bar{u}_t),u_t-\bar{u}_t\rangle_i+\sum_{j=1}^\infty|B_t^j(u_t) - B_t^j(\bar{u}_t)|_H^2\Big)\,dt  \\
& + \sum_{j=1}^\infty 2\big(u_t-\bar{u}_t,B_t^j(u_t)\!-\!B_t^j(\bar{u}_t)\big) dW_t^j+  \int_{\mathcal{D}^c}2 (u_t-\bar{u}_t,\gamma_t(u_t,z)-\gamma_t(\bar{u}_t,z))\tilde{N}(dt,dz) \\
& + \int_{\mathcal{D}^c}|\gamma_t(u_t,z)-\gamma_t(\bar{u}_t,z)|_H^2
N(dt,dz) -\rho(\bar{u}_t)|u_t-\bar{u}_t|_H^2dt\bigg] 
\end{split}    
\end{equation}
almost surely for all $ t \in [0,T]$. 
For each $n \in \mathbb{N}$, consider the sequence of stopping times $\sigma_n$ given by
\begin{equation} \label{eq:stoptime_uniqueness}
\sigma_n:=\inf\{t\in[0,T]:|u_t|_H>n\}\wedge \inf\{t\in[0,T]:|\bar{u}_t|_H>n\}\wedge T\,.
\end{equation}
Replacing $t$ by $t_n := t\wedge \sigma_n$ in \eqref{eq:product} and taking expectation, we obtain that almost surely for all $ t \in [0,T]$ and $n \in \mathbb{N}$
\begin{equation*}
\begin{split}
\mathbb{E}& \Big ( e^{-\int_0^{t_n}  \rho(\bar{u}_s)\,ds}  |u_{t_n}-\bar{u}_{t_n}|_H^2 \Big)- \mathbb{E}|u_0-\bar{u}_0|_H^2 \\
   = & \mathbb{E}\int_0^{t_n} e^{-\int_0^s \rho(\bar{u}_r)dr}\Big(2 \sum_{i=1}^k \langle A_s^i(u_s)-A_s^i(\bar{u}_s),u_s-\bar{u}_s\rangle_i  +\sum_{j=1}^\infty|B_s^j(u_s) - B_s^j(\bar{u}_s)|_H^2 \\
 & + \int_{\mathcal{D}^c}|\gamma_s(u_s,z)-\gamma_s(\bar{u}_s,z)|_H^2
\nu(dz) -\rho(\bar{u}_s)|u_s-\bar{u}_s|_H^2 \Big)\,ds  \leq 0
\end{split}    
\end{equation*}
where last inequality follows from Assumption A-\ref{ass:lmon}.
Thus if $u_0=\bar{u}_0 \,\,\,\mathbb{P}$-a.s., then 
\begin{equation*}
\mathbb{E}[e^{-\int_0^{t_n} \rho(\bar{u}_s)ds}|u_{t_n}-\bar{u}_{t_n}|_H^2]\leq 0.
\end{equation*} 
Letting $n\to \infty$ and using Fatou's lemma we conclude that
for all $t \in [0,T]$, one has $\mathbb{P}(|u_t - \bar u_t|_H^2=0)=1$.
This, together with the fact that  $u-\bar u$ is c\`adl\`ag in $H$, finishes the 
proof.
\end{proof}   

If we replace the local monotonicity Assumption A-\ref{ass:lmon} by the strong monotonicity  Assumption A-\ref{ass:strong_mono} given below, then we obtain the result about the continuous dependence of the solution to \eqref{eq:see_1} on the initial data as stated in Theorem~\ref{thm:well-posedness}.

\begin{assumption}[Strong Monotonicity] \label{ass:strong_mono} 
There exists a constant $\theta' >0$ such that for all $x,\bar{x} \in V$,
\begin{equation*} 
\begin{split}
2 & \sum_{i=1}^k  \langle A^i(x)-A^i(\bar{x}), x-\bar{x} \rangle_i +  (p_0-1)\sum_{j=1}^\infty|B^j(u)-B^j(v)|_{L^2}^2 \\ & +  \int_{\mathcal{D}^c}|\gamma(u,z)-\gamma(v,z)|_{L^2}^2 \nu(dz)
\leq  -\theta ' \sum_{i=1}^k |x-\bar{x}|_{V_i}^{\alpha_i}+ C |u-v|_{L^2}^2 \, .
\end{split}	
\end{equation*}
\end{assumption}

\begin{theorem} \label{thm:well-posedness}
Let Assumptions A-\ref{ass:groA}, A-\ref{ass: growth_gamma}  and A-\ref{ass:strong_mono} hold
and $u_0,\bar{u}_0\in L^{p_0}(\Omega; H)$. 
If $u$ and $\bar{u}$ are two solutions of  \eqref{eq:see_1} with initial condition $u_0$ and $\bar{u}_0$ respectively, then
\begin{equation*} 
\mathbb{E} \Big( \sup_{t \in [0,T]} |u_t-\bar{u}_t|_{H}^p + \sum_{i=1}^k \int_0^T |u_t- \bar{u}_t|_H^{p_0-2}|u_t- \bar{u}_t|_{V_i}^{\alpha_i} dt \Big) < C \mathbb{E}|u_0-\bar{u}_0|_{H}^{p_0} 
\end{equation*}   
for any $p\in[2,p_0), \, p_0>2$ and
\[
\mathbb{E} \Big( \sup_{t \in [0,T]} |u_t-\bar{u}_t|_{H}^2 + \sum_{i=1}^k \int_0^T |u_t- \bar{u}_t|_{V_i}^{\alpha_i} dt \Big) < C \mathbb{E}|u_0-\bar{u}_0|_{H}^2 \, .
\]
\end{theorem}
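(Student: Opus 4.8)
The plan is to transcribe the a priori argument of Theorem~\ref{thm:apriori}, applied this time to the difference process $w_t:=u_t-\bar u_t$, using the strong monotonicity Assumption A-\ref{ass:strong_mono} in place of the coercivity Assumption A-\ref{ass:coer}. The key observation is that A-\ref{ass:strong_mono} has exactly the same shape as A-\ref{ass:coer}, but with $x$ replaced by $x-\bar x$, the forcing $f_t$ set to zero, the operators replaced by their increments $A^i(x)-A^i(\bar x)$, $B^j(x)-B^j(\bar x)$, $\gamma(x,z)-\gamma(\bar x,z)$, and with $\theta,K$ replaced by $\theta',C$. Hence every term that produced $f$ in Theorem~\ref{thm:apriori} now disappears and the right-hand side collapses to $C\,\mathbb{E}|w_0|_H^{p_0}$.

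First I would subtract the two copies of \eqref{eq:see_1} to obtain \eqref{eq:unique_sol} for $w$, and introduce the stopping times $\sigma_n$ of \eqref{eq:stoptime_uniqueness}, which bound $|u|_H$ and $|\bar u|_H$ by $n$ and thereby turn the stochastic integrals into genuine martingales on $[0,\sigma_n]$. I would then apply the It\^o formula of \cite{gyongy17} to $|w_t|_H^2$, obtaining the analogue of \eqref{eq:itosq_stopped}, whose bounded-variation part is controlled by A-\ref{ass:strong_mono}. Crucially, since A-\ref{ass:strong_mono} carries only a constant $C$ in front of $|w|_H^2$ and not the process $\rho(\bar u_t)$ that forced the exponential weight in Theorem~\ref{thm:unique}, ordinary Gronwall applies with no weight; this is precisely the mechanism that upgrades the qualitative uniqueness of Theorem~\ref{thm:unique} into a quantitative estimate.

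For the power-$2$ statement this is essentially all that is needed: the jump contribution to $|w_t|_H^2$ compensates to $\int_{\mathcal{D}^c}|\gamma_s(u_s,z)-\gamma_s(\bar u_s,z)|_H^2\,\nu(dz)$, which is exactly a term on the left of A-\ref{ass:strong_mono}. Gronwall gives $\sup_t\mathbb{E}|w_t|_H^2\le C\,\mathbb{E}|w_0|_H^2$, the coercive term yields $\sum_i\mathbb{E}\int_0^T|w_t|_{V_i}^{\alpha_i}\,dt\le C\,\mathbb{E}|w_0|_H^2$, and the Burkholder--Davis--Gundy estimate (Lemma~\ref{lem:BDG_jump}) with Young's inequality moves the supremum inside, exactly as in \eqref{eq:p2bdg}. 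For $p_0>2$ I would apply It\^o once more to $|w_t|_H^{p_0}$, mirroring \eqref{eq:ito_p0}: the drift is dominated via A-\ref{ass:strong_mono}, producing the coercive term $-\theta'\tfrac{p_0}{2}\sum_i|w_s|_H^{p_0-2}|w_s|_{V_i}^{\alpha_i}$ kept on the left plus a harmless $C|w_s|_H^{p_0}$. Gronwall then gives $\mathbb{E}|w_{t\wedge\tau}|_H^{p_0}\le C\,\mathbb{E}|w_0|_H^{p_0}$ for every bounded $\tau$, Yor's Lemma~\ref{prop:yor} applied to $(|w_t|_H^{p_0})_t$ delivers $\mathbb{E}\sup_t|w_t|_H^{p}\le C\,\mathbb{E}|w_0|_H^{p_0}$ for $p=p_0r\in[2,p_0)$, and feeding the bound back with $\tau=T$ and $n\to\infty$ produces the space-time term $\sum_i\mathbb{E}\int_0^T|w_t|_H^{p_0-2}|w_t|_{V_i}^{\alpha_i}\,dt$.

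The step I expect to be the main obstacle is the large-jump contribution $I_2$ in the expansion of $|w_t|_H^{p_0}$. The Taylor estimate \eqref{eq:taylor}, applied with $x=w_s$ and $y=\gamma_s(u_s,z)-\gamma_s(\bar u_s,z)$, leaves a term $\int_{\mathcal{D}^c}|\gamma_s(u_s,z)-\gamma_s(\bar u_s,z)|_H^{p_0}\,\nu(dz)$ that must be absorbed into $C|w_s|_H^{p_0}$ for Gronwall to close. Unlike the $L^2$-difference, which A-\ref{ass:strong_mono} controls directly, this $p_0$-th power of the $\gamma$-increment is not dominated by $|w_s|_H^{p_0}$ through the monotonicity assumption alone, so I would need genuine $p_0$-th order control of the noise increment. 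In the concrete setting of Theorem~\ref{thm:aniso_p_Laplace} this is where the Lipschitz bound \eqref{eq:gamma_1} must be combined with the $p_0$-integrability \eqref{eq:gamma_3}, the companion term $|w_s|_H^{p_0-2}|\gamma_s(u_s,z)-\gamma_s(\bar u_s,z)|_H^2$ being handled by \eqref{eq:gamma_1} together with Young's inequality. Once this jump term is dominated by $C|w_s|_H^{p_0}$, the rest is routine Gronwall, Yor and BDG bookkeeping identical to Theorem~\ref{thm:apriori}.
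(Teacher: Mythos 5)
Your proposal is, in outline, exactly the paper's own proof: the paper disposes of this theorem in three sentences, saying precisely what you say --- apply the It\^o formula of \cite{gyongy17} to the difference equation \eqref{eq:unique_sol}, repeat the proof of Theorem~\ref{thm:apriori} for $w:=u-\bar{u}$ with the strong monotonicity Assumption A-\ref{ass:strong_mono} in place of the coercivity Assumption A-\ref{ass:coer}, and work with the stopping times \eqref{eq:stoptime_uniqueness}. Your drift estimate (Cauchy--Schwarz to create the factor $p_0-1$, then strong monotonicity), the Gronwall step, Yor's Lemma~\ref{prop:yor} for the supremum when $p_0>2$, and the BDG route for $p_0=2$ are precisely what the paper intends.

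However, the obstacle you flag at the large-jump term is a genuine gap, and it is a gap in the paper's proof as much as in yours; the paper's reduction simply does not address it. Mimicking \eqref{eq:taylor} and \eqref{eq:I_2} for $|w|_H^{p_0}$, one is left (after taking expectations and using Young's inequality) with the term
\[
\mathbb{E}\int_0^{t}\int_{\mathcal{D}^c}|\gamma_s(u_s,z)-\gamma_s(\bar{u}_s,z)|_H^{p_0}\,\nu(dz)\,ds ,
\]
and none of the hypotheses quoted in the theorem dominates it by $C\,\mathbb{E}\int_0^t|w_s|_H^{p_0}\,ds$. Assumption A-\ref{ass:strong_mono} controls only the \emph{quadratic} increment of $\gamma$, and only as part of a sum with the $A$- and $B$-increments; Assumption A-\ref{ass: growth_gamma} applied to $u$ and $\bar{u}$ separately gives a bound of order $f_s^{p_0/2}+|u_s|_H^{p_0}+|\bar{u}_s|_H^{p_0}$, which does not vanish as $u_0-\bar{u}_0\to 0$, so Gronwall no longer yields a bound proportional to $\mathbb{E}|u_0-\bar{u}_0|_H^{p_0}$. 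Moreover, the repair you suggest for the concrete equation does not work either: \eqref{eq:gamma_3} is a growth bound, not an increment bound, and the quadratic Lipschitz bound \eqref{eq:gamma_1} combined with \eqref{eq:gamma_3} does not imply $\int_{\mathcal{D}^c}(\int_{\mathscr{D}}|\gamma_t(u,z)-\gamma_t(v,z)|^2dx)^{p_0/2}\nu(dz)\le K|u-v|_{L^2}^{p_0}$; the increment can concentrate on a set of small $\nu$-measure, which blows up the $p_0$-integral while \eqref{eq:gamma_1} and \eqref{eq:gamma_3} continue to hold. What actually closes the argument --- both for the abstract theorem and, through it, for \eqref{eq:estimates_aniso} --- is an \emph{additional} hypothesis, namely a $p_0$-th order Lipschitz condition on $\gamma$ such as $\int_{\mathcal{D}^c}|\gamma_t(x,z)-\gamma_t(\bar{x},z)|_H^{p_0}\,\nu(dz)\le K|x-\bar{x}|_H^{p_0}$, the $p_0$-analogue of \eqref{eq:gamma_1} in the same way that \eqref{eq:gamma_3} is the $p_0$-analogue of \eqref{eq:gamma_2}. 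With that added, your Taylor/Young treatment of $I_2$, and hence the whole $p_0>2$ case, goes through.

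A final caveat: the $p_0=2$ supremum estimate is not as routine as you claim. The step \eqref{eq:p2bdg} rests on Remark~\ref{rem:groB}, i.e.\ on coercivity and growth of the coefficients evaluated at $u$ itself; for the difference process, after BDG and Young one needs $\mathbb{E}\int_0^T(\sum_j|B^j_s(u_s)-B^j_s(\bar{u}_s)|_H^2+\int_{\mathcal{D}^c}|\gamma_s(u_s,z)-\gamma_s(\bar{u}_s,z)|_H^2\nu(dz))\,ds\le C\,\mathbb{E}|w_0|_H^2$, and Assumption A-\ref{ass:strong_mono} with $p_0=2$ cannot deliver this, because the noise increments enter it only in combination with $\sum_i\langle A^i_s(u_s)-A^i_s(\bar{u}_s),w_s\rangle_i$, for which no lower bound is assumed. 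Here too one needs separate Lipschitz-type control of the noise increments; in the application this is available (via \eqref{eq:gamma_1}, the Lipschitz constants of $h_j$, and the fact that the gradient part of $B$ is controlled by the $V_i$-increment term that the Gronwall step already bounds), but it is not a consequence of the abstract assumptions quoted in the theorem.
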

\begin{proof}
The proof is very similar to the proof of Theorem~\ref{thm:apriori}.
Indeed we apply It\^o formula from \cite{gyongy17}  to \eqref{eq:unique_sol} and repeat the proof of Theorem~\ref{thm:apriori} for the process $u_t-\bar{u}_t$. Here we note that one needs to use the strong monotonicity Assumption A-\ref{ass:strong_mono} in place of Assumption A-\ref{ass:coer} and work with the sequence of stopping times given by \eqref{eq:stoptime_uniqueness}.
\end{proof}

\subsection{Existence of solution} \label{sec:moment}

We  prove the existence of solution to stochastic evolution equation \eqref{eq:see_1} 
by using the Galerkin method.
We consider a Galerkin scheme $(\mathcal{V}_m)_{m\in \mathbb{N}}$ for $V$, i.e. for each $m\in \mathbb{N}$, $\mathcal{V}_m$ is an $m$-dimensional subspace of $V$ such that $\mathcal{V}_m \subset \mathcal{V}_{m+1} \subset V$ and $\cup_{m\in \mathbb{N}} \mathcal{V}_m$ is dense in $V$. Let $\{\phi_l : \, l=1,2,\ldots m\}$ be a basis of $\mathcal{V}_m$.
Assume that for each $m\in\mathbb{N}$, $u_0^m$ is a $\mathcal{V}_m$-valued 
$\mathscr{F}_0$-measurable random variable satisfying 
\begin{equation}
\label{eq:initial_data}
\sup_{m\in\mathbb{N}}\mathbb{E}|u_0^m|_H^{p_0}<\infty \,\,\, \mbox{and} \,\,\, \mathbb{E}|u_0^m-u_0|_H^2\to 0 \,\,\,\text{as}\,\,\, m\to\infty . 
\end{equation}
It is always possible to obtain such an approximating sequence. For example, consider $\{\phi_l\}_{l\in\mathbb{N}}\!\subset \! V$ forming an orthonormal basis in $H$ and for each $m\in \mathbb{N}$, take $u_0^m=~\Pi_mu_0$ where $\Pi_m:H\to \mathcal{V}_m$ are the projection operators.

For each $m\in \mathbb{N}$ and $\phi_l \in \mathcal{V}_m$, $l=1,2,\ldots,m$, consider the stochastic differential equation:
\begin{equation}\label{eq:sde}
\begin{split}
(u_t^m,\phi_l)= & (u_0^m,\phi_l)+\sum_{i=1}^k\int_0^t \langle A_s^i(u_s^m),\phi_l \rangle_i ds \\
& + \sum_{j=1}^m \int_0^t(\phi_l,B_s^j(u_s^m))dW_s^j +\int_0^t\int_{\mathcal{D}^c}(\phi_l,\gamma_s(u_s^m,z))\tilde{N}(ds,dz) 
\end{split} 
\end{equation}
almost surely for all $t\in[0,T]$.
Using the results on solvability of stochastic differential equations 
in finite dimensional space (see, e.g., Theorem 1 in Gy\"ongy and Krylov \cite{gyongy80}), together with 
Assumptions A-\ref{ass:hem} to A-\ref{ass: growth_gamma} and
Remark~\ref{rem:demicont}, there exists a 
unique adapted and c\`adl\`ag (and thus progressively measurable) $\mathcal{V}_m$-valued process $u^m$ satisfying \eqref{eq:sde}.

\begin{lemma}[A priori Estimates for Galerkin Discretization] \label{lem:galbound}
Suppose that~\eqref{eq:initial_data} and  
Assumptions A-\ref{ass:coer}, A-\ref{ass:groA} and A-\ref{ass: growth_gamma} hold.
Then there exists a constant $C$ independent of $m$, such that
\begin{enumerate}[i)]
\item for every $p_0 \geq \beta+2$,
\begin{equation*}
\sup_{t\in [0,T]}\mathbb{E}|u_t^m|_H^{p_0}+\sum_{i=1}^k\mathbb{E}\int_0^T|u_t^m|_{V_i}^{\alpha_i} \, dt+ \sum_{i=1}^k \mathbb{E}\int_0^T |u_t^m|_H^{p_0-2} |u_t^m|_{V_i}^{\alpha_i} \, dt \leq C.
\end{equation*}
\item Further, 
\begin{equation*}
\mathbb{E}\sup_{t\in [0,T]}|u_t^m|_H^{2}\leq C	
\end{equation*}
and 
\[
\mathbb{E}\sup_{t\in [0,T]}|u_t^m|_H^{p}\leq C
\]
for any $p\in[2,p_0)$ in case $p_0>2$. 
\item Moreover, for all $i=1,2,\ldots, k$
\begin{equation*} 
\mathbb{E}\int_0^{T} |A_s^i(u^m_s)|_{V_i^*}^\frac{\alpha_i}{\alpha_i-1} ds \leq C \quad  
\end{equation*}
\item and finally,
\begin{equation*} 
\mathbb{E}\sum_{j=1}^\infty \int_0^{T} |B^j_s(u^m_s)|_H^2 ds + \mathbb{E} \int_0^T \int_{\mathcal{D}^c}|\gamma_s(u_s^m,z)|_H^2 \nu (dz)ds\leq C. 
\end{equation*}
\end{enumerate}
\end{lemma}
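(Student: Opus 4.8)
The plan is to repeat, essentially verbatim, the argument used to prove Theorem~\ref{thm:apriori}, now applied to each finite-dimensional Galerkin approximation $u^m$ rather than to a genuine solution, and to track carefully that every constant produced depends only on $p_0,K,T,\theta$ and on $\sup_m\mathbb{E}|u_0^m|_H^{p_0}$, which is finite by \eqref{eq:initial_data}. The decisive simplification is that $u^m$ takes values in the finite-dimensional space $\mathcal{V}_m$, so the It\^o formula of \cite{gyongy17} is not needed: testing \eqref{eq:sde} against $u^m$ itself and applying the classical finite-dimensional It\^o formula reproduces exactly the identities \eqref{eq:itosq_stopped} and \eqref{eq:ito_p0} with $u$ replaced by $u^m$, except that the drift pairing $\langle A_s^i(u_s^m),u_s^m\rangle_i$ survives unchanged while $B^j$ and $\gamma$ appear composed with the $H$-orthogonal projection $\Pi_m$ onto $\mathcal{V}_m$ in the quadratic-variation and compensator terms. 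Since $|\Pi_m B_s^j(u_s^m)|_H\le|B_s^j(u_s^m)|_H$ and $|\Pi_m\gamma_s(u_s^m,z)|_H\le|\gamma_s(u_s^m,z)|_H$, and these projected quantities enter the left-hand side of A-\ref{ass:coer} with positive coefficients, Assumption A-\ref{ass:coer} continues to hold after projection with the same $\theta,K,f$; this is the one structural point to record, and it is the reason the Galerkin scheme inherits the coercivity of the original operator.

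Granting this, I would first localise by the stopping times $\sigma_n^m:=\inf\{t\in[0,T]:|u_t^m|_H>n\}\wedge T$, which are genuine stopping times because $u^m$ is adapted and c\`adl\`ag. Applying It\^o's formula to $|u_{t\wedge\sigma_n^m}^m|_H^{p_0}$, estimating the drift and the compensator exactly as in \eqref{eq:I_1} and \eqref{eq:I_2} (using A-\ref{ass:coer}, A-\ref{ass: growth_gamma}, Taylor's inequality \eqref{eq:taylor} and Young's inequality), and taking expectations so that the martingale terms vanish, yields the analogue of \eqref{eq:truegronwall}. Gronwall's lemma and $n\to\infty$ then give the $\sup_t\mathbb{E}|u_t^m|_H^{p_0}$ bound together with the mixed estimate $\sum_i\mathbb{E}\int_0^T|u_t^m|_H^{p_0-2}|u_t^m|_{V_i}^{\alpha_i}\,dt$; running the same computation at the level $p_0=2$ produces $\sum_i\mathbb{E}\int_0^T|u_t^m|_{V_i}^{\alpha_i}\,dt$. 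This establishes (i). For (ii) I would apply Lemma~\ref{prop:yor} to $(|u_t^m|_H^{p_0})_{t\ge0}$ when $p_0>2$, and the Burkholder--Davis--Gundy estimates \eqref{eq:p2bdg_weiner}--\eqref{eq:p2bdg} combined with the bounds from (i) when $p_0=2$, exactly as in Theorem~\ref{thm:apriori}. The uniformity of $C$ in $m$ is automatic, since $\mathbb{E}|u_0^m|_H^{p_0}$ is the only $m$-dependent input and it is bounded uniformly by \eqref{eq:initial_data}.

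The remaining estimates (iii) and (iv) are purely pointwise consequences of the growth assumptions combined with (i). For (iii), Assumption A-\ref{ass:groA} gives
\[
\mathbb{E}\int_0^T|A_s^i(u_s^m)|_{V_i^*}^{\frac{\alpha_i}{\alpha_i-1}}\,ds\le \mathbb{E}\int_0^T\big(f_s+K|u_s^m|_{V_i}^{\alpha_i}\big)\big(1+|u_s^m|_H^\beta\big)\,ds,
\]
and since $\beta\le p_0-2$, each resulting product ($f_s|u_s^m|_H^\beta$, $|u_s^m|_{V_i}^{\alpha_i}$, and $|u_s^m|_H^\beta|u_s^m|_{V_i}^{\alpha_i}$) is controlled after Young's inequality by $f\in L^{p_0/2}$ and by the estimates of (i). For (iv), Remark~\ref{rem:groB} bounds $\sum_{j=1}^\infty|B_s^j(u_s^m)|_H^2+\int_{\mathcal{D}^c}|\gamma_s(u_s^m,z)|_H^2\,\nu(dz)$ pointwise by $C(1+f_s^{p_0/2}+|u_s^m|_H^{p_0}+\sum_i|u_s^m|_{V_i}^{\alpha_i}+|u_s^m|_H^\beta\sum_i|u_s^m|_{V_i}^{\alpha_i})$, and integrating in $t$ and taking expectations each term is again dominated using (i) and $f\in L^{p_0/2}$; note this holds for the full sum $\sum_{j=1}^\infty$, not merely the $m$ terms appearing in \eqref{eq:sde}.

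I expect the only genuine obstacle to be the bookkeeping in the first paragraph: one must verify carefully that the finite-dimensional It\^o expansion of $|u^m|_H^{p_0}$ produces precisely the projected noise terms, and that, because $\Pi_m$ does not increase the $H$-norm, the coercivity inequality A-\ref{ass:coer} survives the projection with the same constants. Once this is in place, the rest is a faithful transcription of the proof of Theorem~\ref{thm:apriori}, with the uniform initial bound \eqref{eq:initial_data} supplying the $m$-independence of $C$.
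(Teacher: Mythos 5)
Your proposal is correct and follows essentially the same route as the paper's proof: repeat the argument of Theorem~\ref{thm:apriori} for $u^m$ with the stopping times $\sigma_n^m$, obtain $m$-independence of the constant from~\eqref{eq:initial_data}, and deduce (iii) from Assumption A-\ref{ass:groA}, Young's inequality and part (i), and (iv) from Remark~\ref{rem:groB} and part (i). The one step the paper makes explicit but you leave as an assertion is why the localized stochastic integrals have zero expectation -- since $u^m$ is not yet known to satisfy the integrability bounds of Definition~\ref{def:sol} (these are what is being proved), one invokes the equivalence of norms on the finite-dimensional space $\mathcal{V}_m$, so that $|u^m_s|_{V_i}\leq C_m n$ on $[0,\sigma_n^m]$ and Remark~\ref{rem:groB} then makes the integrands integrable; conversely, your bookkeeping of the projections $\Pi_m$ and the observation that they only decrease the left-hand side of A-\ref{ass:coer} is a point the paper glosses over.
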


\begin{proof}
Proof of $(i)$ and $(ii)$ is almost a repetition of the proof of analogous results in Theorem \ref{thm:apriori}. Indeed, for each $m,n\in\mathbb{N}$, one can define a sequence of stopping times 
\[
\sigma_n^m:=\inf\{t\in[0,T]:|u_t^m|_H>n\} \wedge T
\] 
and repeat the proof of Theorem \ref{thm:apriori} by replacing $u_t$ with $u_t^m$ and $\sigma_n$ with $\sigma_n^m$. 
There are two main points to be noted. 
First, the stochastic integrals appearing on right-hand side of \eqref{eq:itosq}, 
with $u_s$ replaced by $u_s^m$, are martingales for each $m,n\in\mathbb{N}$. 
Indeed, on a finite dimensional space, all norms are equivalent and hence for each $m,n\in\mathbb{N}$,
\[
\mathbb{E}\int_0^{T\wedge\sigma_n^m} |u_s^m|_V^\alpha ds 
\leq C_m \mathbb{E}\int_0^{T\wedge\sigma_n^m}  n^\alpha ds < \infty
\]
with some constant $C_m$.
The second point is that, since
\[
\sup_{m\in \mathbb{N}}\mathbb{E}|u_0^m|^{p_0} <~\infty,
\]
one can take a constant independent of $m$ to obtain $(i)$
and $(ii)$.
The estimates in $(iii)$ and $(iv)$ can be proved as below. Using Assumption A-\ref{ass:groA}, we obtain
\begin{equation*}
\begin{split}
I  :=\sum_{i=1}^k \mathbb{E}\int_0^{T} & |A_s^i(u^m_s)|_{V^*_i}^\frac{\alpha_i}{\alpha_i-1} ds
\leq \sum_{i=1}^k \mathbb{E}\int_0^{T}(f_s+K|u^m_s|^{\alpha_i}_{V_i})(1+|u^m_s|^\beta_H)ds\\
& = k \mathbb{E}\int_0^{T}f_s\, ds+ k \mathbb{E}\int_0^{T}f_s|u^m_s|^\beta_Hds +K \sum_{i=1}^k \mathbb{E}\int_0^{T}|u^m_s|^{\alpha_i}_{V_i} ds  \\
&\quad + K \sum_{i=1}^k \mathbb{E}\int_0^{T} |u^m_s|^\beta_H |u^m_s|^{\alpha_i}_{V_i} \, ds\, . 
\end{split}
\end{equation*}
Further application of Young's inequality yields
\[
f_s+f_s|u^m_s|_H^\beta \leq \frac{4}{p_0}f_s^\frac{p_0}{2} 
+ \frac{p_0-2}{p_0} + \frac{p_0-2}{p_0}|u^m_s|_H^{\beta \frac{p_0}{p_0-2}}.
\]
Moreover, $|u^m_s|_H^\beta \leq (1+|u^m_s|_H)^{p_0-2}$, since $p_0 \geq \beta +2$.
Hence, 
\begin{equation*}
\begin{split}
I & \leq \frac{4k}{p_0}\mathbb{E}\int_0^{T}f_s^\frac{p_0}{2}ds+\frac{p_0-2}{p_0}kT+ \frac{p_0-2}{p_0}k\mathbb{E}\int_0^T|u^m_s|^{\beta \frac{p_0}{p_0-2}}_H ds\notag \\
&\quad +K \sum_{i=1}^k \mathbb{E}\int_0^{T}|u^m_s|^{\alpha_i}_{V_i}ds+ K \sum_{i=1}^k \mathbb{E}\int_0^T|u^m_s|^{\alpha_i}_{V_i}
(1+|u^m_s|_H)^{p_0-2}ds.	
\end{split}	
\end{equation*}
Furthermore, applying H\"{o}lder's inequality and using the fact $p_0\geq \beta+2$,  
\begin{equation}
\label{eq:boundA}
\begin{split}
I & \leq   \frac{4k}{p_0}\mathbb{E}\int_0^{T}f_s^\frac{p_0}{2}ds+\frac{p_0-2}{p_0}kT+\frac{p_0-2}{p_0}kT^\frac{p_0-2-\beta}{p_0-2}\Big(\mathbb{E}\int_0^T|u^m_s|^{p_0}_H ds\Big)^\frac{\beta}{p_0-2}  \\ 
& \,\,\,\, +(2^{p_0-3}+1)K \sum_{i=1}^k \mathbb{E}\int_0^{T}|u^m_s|^{\alpha_i}_{V_i}ds + 2^{p_0-3}K \sum_{i=1}^k \mathbb{E}\int_0^T|u^m_s|^{\alpha_i}_{V_i}|u_s^m|_H^{p_0-2}ds \\
& \leq \frac{4k}{p_0}\mathbb{E}\int_0^{T}f_s^\frac{p_0}{2}ds+\frac{p_0-2}{p_0}2kT+\frac{p_0-2}{p_0}kT \sup_{0\leq s \leq T}\mathbb{E}|u^m_s|^{p_0}_H \\ 
& \,\,\,\,+(2^{p_0-3}+1)\sum_{i=1}^k K\mathbb{E}\int_0^{T}|u^m_s|^{\alpha_i}_{V_i}ds + 2^{p_0-3}K\sum_{i=1}^k \mathbb{E}\int_0^T|u^m_s|^{\alpha_i}_{V_i}|u_s^m|_H^{p_0-2}ds\,. 
\end{split}	
\end{equation}
By using $(i)$ in \eqref{eq:boundA}, we obtain $(iii)$.
Furthermore, by Remark \ref{rem:groB}, we get
\begin{equation*}
\begin{split}
\mathbb{E}  \int_0^{T}\sum_{j=1}^\infty & |B_s^j(u^m_s)|_H^2 ds  + \mathbb{E} \int_0^T \int_{\mathcal{D}^c}|\gamma_s(u_s^m,z)|_H^2 \nu (dz)ds \\
\leq & C\Big[T+\mathbb{E}\int_0^Tf_s^\frac{p_0}{2}ds +\mathbb{E}\int_0^T|u^m_s|_H^{p_0}ds \\
&  + \sum_{i=1}^k\mathbb{E}\int_0^T|u^m_s|_{V_i}^{\alpha_i} ds + \sum_{i=1}^k \mathbb{E}\int_0^T|u^m_s|_{V_i}^{\alpha_i}(1+|u^m_s|_H)^{p_0-2} ds \Big]  \\
\leq & C\Big[T+\mathbb{E}\int_0^Tf_s^\frac{p_0}{2}ds+T\sup_{s\in[0, T]}\mathbb{E}|u^m_s|_H^{p_0}  \\
&   + \sum_{i=1}^k \mathbb{E}\int_0^T|u^m_s|_{V_i}^{\alpha_i} ds +\sum_{i=1}^k\mathbb{E}\int_0^T|u^m_s|_{V_i}^{\alpha_i}|u^m_s|_H^{p_0-2} ds \Big] 
\end{split}
\end{equation*}
and hence by using $(i)$, we get $(iv)$.
\end{proof}


Having obtained the necessary a priori estimates, we will now extract weakly convergent subsequences using the compactness arguement. After that using the local monotonicity condition, we establish the existence of a solution to \eqref{eq:see_1}.

\begin{lemma} \label{lem:weaklimit}
Let Assumptions A-\ref{ass:lmon} to A-\ref{ass: growth_gamma} 
together with~\eqref{eq:initial_data} hold. 
Then there is a subsequence $(m_q)_{q\in \mathbb{N}}$ and
\begin{enumerate}[i)]
\item there exists a process
$u\in \cap_{i=1}^kL^{\alpha_i}((0,T)\times \Omega ; V_i)$
such that 
\[u^{m_q}\rightharpoonup u\,\,\, \text{in}\,\,\, L^{\alpha_i}((0,T)\times \Omega ; V_i) \,\, \,\,\, \forall \,\, i=1,2,\ldots,k ,
\]
\item there exist  processes $a^i \in L^\frac{\alpha_i}{\alpha_i-1}((0,T)\times \Omega ; V^*)$ such that \[
A^i(u^{m_q})\rightharpoonup a^i \,\,\, \text{in}\,\,\, L^\frac{\alpha_i}{\alpha_i-1}((0,T)\times \Omega ; V^*) \, \,\,\, \forall \,\, i=1,2,\ldots,k ,
\]
\item there exists a process $b\in L^2((0,T)\times \Omega ; l_2(H))$ such that 
\[
B (u^{m_q})\rightharpoonup b \,\,\, \text{in}\,\,\, L^2((0,T)\times \Omega ; l_2(H)),
\]
\item there exists $\Gamma \in L^2((0,T)\times \Omega \times Z; H)$ such that 
\[
\gamma(u^{m_q})1_{\mathcal{D}^c} \rightharpoonup \Gamma 1_{\mathcal{D}^c}  \,\,\, \text{in}\,\,\, L^2((0,T)\times \Omega \times Z ; H).
\]
\end{enumerate}
\end{lemma}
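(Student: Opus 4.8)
The plan is to obtain all four weak convergences as consequences of the uniform (in $m$) a priori bounds of Lemma~\ref{lem:galbound} together with the fact that bounded sequences in reflexive Banach spaces admit weakly convergent subsequences (Eberlein--\v{S}mulian). The four target spaces are reflexive: since each $V_i$ is separable and reflexive and $1<\alpha_i<\infty$, both $L^{\alpha_i}((0,T)\times\Omega;V_i)$ and $L^{\alpha_i/(\alpha_i-1)}((0,T)\times\Omega;V_i^*)$ are reflexive; and $L^2((0,T)\times\Omega;\ell_2(H))$ and $L^2((0,T)\times\Omega\times Z;H)$ are Hilbert spaces.

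First I would record the bounds. Part $(i)$ of Lemma~\ref{lem:galbound} shows that $(u^m)_m$ is bounded in $L^{\alpha_i}((0,T)\times\Omega;V_i)$ for every $i$; part $(iii)$ shows that $(A^i(u^m))_m$ is bounded in $L^{\alpha_i/(\alpha_i-1)}((0,T)\times\Omega;V_i^*)$; and part $(iv)$ shows that $(B(u^m))_m$ is bounded in $L^2((0,T)\times\Omega;\ell_2(H))$ while $(\gamma(u^m)1_{\mathcal D^c})_m$ is bounded in $L^2((0,T)\times\Omega\times Z;H)$. As only finitely many spaces occur, I would extract weakly convergent subsequences one after another---first along $i=1,\dots,k$ for $(u^m)$, then for each $(A^i(u^m))$, and finally for $(B(u^m))$ and $(\gamma(u^m)1_{\mathcal D^c})$---each time passing to a further subsequence of the previous one. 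After this finite chain of extractions we are left with a single subsequence $(m_q)$ along which $(ii)$, $(iii)$ and $(iv)$ hold with some limits $a^i$, $b$ and $\Gamma 1_{\mathcal D^c}$, and along which $u^{m_q}\rightharpoonup u^{(i)}$ in $L^{\alpha_i}((0,T)\times\Omega;V_i)$ for each $i$ and some $u^{(i)}$. The weak limit of $(\gamma(u^m)1_{\mathcal D^c})$ again vanishes on $\mathcal D$, whence it may be written $\Gamma 1_{\mathcal D^c}$.

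The main obstacle is the one genuinely non-mechanical point in $(i)$: showing that the limits $u^{(i)}$ all coincide, so that a single process $u\in\cap_{i=1}^k L^{\alpha_i}((0,T)\times\Omega;V_i)$ represents them simultaneously. For this I would test against the common Hilbert structure. Since each $V_i$ is continuously embedded in $H$ (so that $H\equiv H^*\hookrightarrow V_i^*$), for every $h\in H$ and every bounded measurable $\xi$ on $(0,T)\times\Omega$ the map $w\mapsto \mathbb E\int_0^T\xi_s\,(w_s,h)\,ds$ is a continuous linear functional on $L^{\alpha_i}((0,T)\times\Omega;V_i)$. Applying it to $u^{m_q}\rightharpoonup u^{(i)}$ gives
\[
\mathbb E\int_0^T\xi_s\,(u^{m_q}_s,h)\,ds \;\longrightarrow\; \mathbb E\int_0^T\xi_s\,(u^{(i)}_s,h)\,ds .
\]
The left-hand side does not depend on $i$, so the right-hand sides agree for all $h\in H$ and all such $\xi$; as $u^{(i)}_s,u^{(j)}_s\in H$ and $H$ separates points, this forces $u^{(i)}=u^{(j)}$ in $H$ for $dt\times\mathbb P$-a.e. $(s,\omega)$. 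Writing $u$ for this common value yields $u\in\cap_{i=1}^k L^{\alpha_i}((0,T)\times\Omega;V_i)$ with $u^{m_q}\rightharpoonup u$ in each $L^{\alpha_i}((0,T)\times\Omega;V_i)$, which is $(i)$; assertions $(ii)$--$(iv)$ were already obtained in the extraction step, completing the proof.
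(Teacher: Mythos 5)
Your proposal is correct and follows essentially the same route as the paper: reflexivity of the four target spaces plus the uniform bounds of Lemma~\ref{lem:galbound} yield weakly convergent subsequences (a finite chain of further extractions giving one common subsequence), after which the limits $u^{(i)}$ are identified by testing against functionals built from the $H$-inner product. The only cosmetic difference is that you test with $h\in H$ and bounded measurable $\xi$ whereas the paper tests with $\xi\in V$ and bounded adapted $\eta$; both variants rest on the same embedding of the $V_i$-valued processes into $H$ and yield the identification $u^{(i)}=u^{(j)}$ $dt\times\mathbb{P}$-a.e.
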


\begin{proof}
The Banach spaces $\,\,L^{\alpha_i}((0,T)\times \Omega ; V_i)$, $ \,\,L^\frac{\alpha_i}{\alpha_i-1}((0,T)\times \Omega ; V^*_i)$, $\,\,L^2((0,T)\times \Omega ; l_2(H))$ and $L^2((0,T)\times \Omega \times Z ; H)$ are reflexive. Thus, due to Lemma \ref{lem:galbound}, there exists a subsequence $m_q$ (see, e.g., Theorem 3.18 in \cite{brezis10}) such that
\begin{itemize}
\item[(i)] $u^{m_q}\rightharpoonup u^i$ in $L^{\alpha_i}((0,T)\times \Omega ; V_i)\,\, \,\,\, \forall \,\, i=1,2,\ldots,k, $
\item[(ii)] $A^i(u^{m_q})\rightharpoonup a^i$ in $L^\frac{\alpha_i}{\alpha_i-1}((0,T)\times \Omega ; V^*_i)\, \,\,\, \forall \,\, i=1,2,\ldots,k,$ 
\item[(iii)] $(B^j(u^{m_q}))_{j=1}^{q}\rightharpoonup (b^j)_{j=1}^\infty$ in $L^2((0,T)\times \Omega ; l_2(H))$ ,
\item[(iv)] $\gamma(u^{m_q})1_{\mathcal{D}^c}\rightharpoonup \Gamma 1_{\mathcal{D}^c}$ in $L^2((0,T)\times \Omega \times Z ; H)$\, .
\end{itemize}
Further, for any $\xi \in V$ and for any adapted and bounded real valued process $\eta_t$, we have for $i,j \in \{1,2,\ldots,k\}$
 \[
 \mathbb{E}\int_0^T\eta_t ( u^i_t-u^j_t,\xi ) dt=\mathbb{E}\int_0^T\eta_t ( u^i_t-u_t^{m_q},\xi ) dt+\mathbb{E}\int_0^T\eta_t ( u_t^{m_q}-u^j_t,\xi ) dt 
 \]
with right--hand--side converging to zero as $q\to \infty$. 
Therefore the processes $u^i, \,\,i=1,2,\ldots,k$ are equal $dt\times \mathbb{P}$~ almost everywhere and henceforth are denoted by $u$ in the remaining article.
\end{proof}


\begin{lemma} \label{lem:limiteq}
Let Assumptions A-\ref{ass:lmon} to A-\ref{ass: growth_gamma}
together with~\eqref{eq:initial_data} hold.
Then  
\begin{enumerate}[i)]
\item for $dt\times \mathbb{P}$ almost everywhere,
\begin{equation}
u_t=u_0+ \sum_{i=1}^k \int_0^t a_s^ids + \sum_{j=1}^\infty \int_0^t b^j_sdW_s^j + \int_0^t \int_{\mathcal{D}^c} \Gamma_s(z)\tilde{N}(ds,dz) \notag
\end{equation}
and moreover almost surely $u\in D([0,T];H)$ and for all  $t\in [0,T]$, 
\begin{equation}\label{eq:itoweak}
\begin{split}
|u_t|_H^2 = & |u_0|_H^2 + \int_0^t \Big[ 2 \sum_{i=1}^k \langle a_s^i, u_s \rangle 
+ \sum_{j=1}^\infty |b^j_s|_H^2 \Big]\, ds 
+ 2\sum_{j=1}^\infty \int_0^t (u_s, b^j_s) dW^j_s \\
& + \int_0^t \int_{\mathcal{D}^c}2 (u_s,\Gamma_s(z))\tilde{N}(ds,dz)+\int_0^t \int_{\mathcal{D}^c}|\Gamma_s(z)|_H^2
N(ds,dz) \, .
\end{split}
\end{equation}
\item Finally, $u\in L^2(\Omega;D([0,T];H))$.
\end{enumerate}
\end{lemma}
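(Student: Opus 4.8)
The plan is to obtain the limit equation by passing to the limit in the Galerkin system \eqref{eq:sde} using the weak convergences established in Lemma~\ref{lem:weaklimit}, then to invoke the It\^o formula of \cite{gyongy17} to get simultaneously the c\`adl\`ag $H$-valued version and the energy identity \eqref{eq:itoweak}, and finally to bound $\mathbb{E}\sup_t|u_t|_H^2$ by a Burkholder--Davis--Gundy argument to settle (ii). Throughout, the abstract limits $a^i,b,\Gamma$ are treated as given data; their identification with $A^i(u),B(u),\gamma(u)$ is a separate monotonicity step and is not needed here.

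First I would set
\[
\bar u_t := u_0 + \sum_{i=1}^k\int_0^t a_s^i\,ds + \sum_{j=1}^\infty\int_0^t b_s^j\,dW_s^j + \int_0^t\int_{\mathcal{D}^c}\Gamma_s(z)\tilde N(ds,dz),
\]
which is well defined since $a^i\in L^{\alpha_i/(\alpha_i-1)}((0,T)\times\Omega;V_i^*)$, $b\in L^2((0,T)\times\Omega;l_2(H))$ and $\Gamma\in L^2((0,T)\times\Omega\times Z;H)$. To show $u=\bar u$ for $dt\times\mathbb{P}$ almost everywhere, fix a basis element $\phi_l\in\cup_m\mathcal{V}_m$ and an arbitrary bounded, adapted, real-valued process $\eta$. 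Multiplying \eqref{eq:sde} by $\eta_t$ (valid for $q$ large enough that $\phi_l\in\mathcal{V}_{m_q}$) and integrating over $[0,T]\times\Omega$, I would pass to the limit along $m_q$ term by term: the left-hand side and the drift term converge by $u^{m_q}\rightharpoonup u$ in $L^{\alpha_i}(V_i)$ and $A^i(u^{m_q})\rightharpoonup a^i$ in $L^{\alpha_i/(\alpha_i-1)}(V_i^*)$, since $\eta_t\phi_l$ (respectively its time integral) lies in the relevant predual; the initial term converges because $u_0^{m_q}\to u_0$ in $L^2(\Omega;H)$; and the two stochastic integrals converge because $b\mapsto\sum_j\int_0^{\cdot}(\phi_l,b_s^j)\,dW_s^j$ and $\Gamma\mapsto\int_0^{\cdot}\int_{\mathcal{D}^c}(\phi_l,\Gamma_s(z))\tilde N(ds,dz)$ are bounded linear maps into $L^2((0,T)\times\Omega)$, hence weakly continuous, so that their pairings against bounded $\eta$ pass to the limit (the truncation $\sum_{j\le m_q}$ being harmless, as it amounts to applying one fixed functional to a weakly convergent sequence). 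This gives $\mathbb{E}\int_0^T\eta_t(u_t-\bar u_t,\phi_l)\,dt=0$ for all such $\eta,\phi_l$, and density of $\cup_m\mathcal{V}_m$ in $H$ then yields the first identity in (i).

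With the limit equation in hand, I would apply the It\^o formula for the square of the $H$-norm from \cite[Theorem 2.1]{gyongy17}, which is tailored to processes valued in a finite intersection $V=\cap_i V_i$. Its hypotheses are exactly the memberships already available: $u\in\cap_iL^{\alpha_i}((0,T)\times\Omega;V_i)$, a drift $\sum_i a^i$ taking values in $V^*=\sum_i V_i^*$ with $\mathbb{E}\int_0^T|\langle a_s^i,u_s\rangle_i|\,ds<\infty$ by H\"older, together with $b\in L^2(l_2(H))$ and $\Gamma\in L^2$. The formula then supplies a c\`adl\`ag $H$-valued version of $u$ and the identity \eqref{eq:itoweak}, completing (i). For (ii) I would bound $\mathbb{E}\sup_{t\in[0,T]}|u_t|_H^2$ directly from \eqref{eq:itoweak}: introducing $\tau_R:=\inf\{t:|u_t|_H>R\}\wedge T$ to render the stopped supremum finite, I take the supremum over $[0,t\wedge\tau_R]$ and expectation, controlling the bounded-variation terms by $\sum_i\mathbb{E}\int_0^T(|a_s^i|_{V_i^*}^{\alpha_i/(\alpha_i-1)}+|u_s|_{V_i}^{\alpha_i})\,ds$ and $\mathbb{E}\int_0^T|b_s|_{l_2(H)}^2\,ds$ via Young's inequality, while handling the Wiener and compensated-Poisson martingales by Burkholder--Davis--Gundy exactly as in \eqref{eq:p2bdg_weiner}--\eqref{eq:p2bdg}; the resulting term $\epsilon\,\mathbb{E}\sup_{s\le t\wedge\tau_R}|u_s|_H^2$ is absorbed into the left-hand side, and letting $R\to\infty$ with Fatou's lemma gives $u\in L^2(\Omega;D([0,T];H))$.

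The main obstacle I anticipate is the limit-identification step rather than the estimates: one must check that the weak convergences of the (truncated) operator sequences survive both under the stochastic integrals and under pairing with the test data, and that the limiting drift $\sum_i a^i$ genuinely lives in $V^*$ with the correct duality, so that the intersection-space It\^o formula of \cite{gyongy17} applies — this being precisely where the novelty $k>1$ is felt, since for $k=1$ one works in a single Gelfand triple.
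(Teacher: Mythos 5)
Your proposal is correct and follows essentially the same route as the paper: passing the weak limits of Lemma~\ref{lem:weaklimit} through the Bochner and stochastic integrals (which are bounded linear, hence weakly continuous, operators), testing against $\eta_t\phi$ with $\phi\in\cup_n\mathcal{V}_n$ and using density to get the limit equation, then invoking the It\^o formula of \cite{gyongy17} for the c\`adl\`ag modification and \eqref{eq:itoweak}, and finally a stopping-time plus Burkholder--Davis--Gundy argument with Fatou's lemma for part (ii). The only cosmetic difference is that you introduce the auxiliary process $\bar{u}$ and your stopping times $\tau_R$ play the role of the paper's $\sigma_n$ from \eqref{eq:stoptime}.
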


\begin{proof}
Using It\^{o}'s isometry, it can be shown that the stochastic integral with respect to Wiener process is a bounded linear operator from $L^2((0,T)\times \Omega ; l_2(H))$ to $L^2((0,T)\times \Omega ; H)$ and hence maps a weakly convergent sequence to a weakly convergent sequence.
Thus, we obtain
\begin{equation}
 \sum_{j=1}^q \int_0^\cdot B^j_s(u^{m_q}_s)dW_s^j \rightharpoonup \sum_{j=1}^\infty \int_0^\cdot  b^j_s dW_s^j      \notag
\end{equation}
in $L^2([0,T]\times \Omega ; H)$, i.e. for any $\psi \in L^2((0,T)\times \Omega ; H)$,
\begin{equation}
 \mathbb{E}\int_0^T\!\!\Big(\sum_{j=1}^q \int_0^t B^j_s(u^{m_q}_s)dW_s^j,\psi(t)\Big)dt\rightarrow  \mathbb{E}\int_0^T\!\Big(\sum_{j=1}^\infty \int_0^t \!\!  b^j_s dW_s^j,\psi(t)\Big)dt.   \label{eq:stochastic}
\end{equation} 
By similar argument, for any $\psi \in L^2((0,T)\times \Omega ; H)$ we have
\begin{equation} \label{eq:stochastic_poisson}
\begin{split}
 \mathbb{E}\int_0^T \Big(\int_0^t \int_{\mathcal{D}^c}  \gamma_s(u^{m_q}_s,z) & \tilde{N}(ds,dz),\psi(t)\Big)dt \\
 & \rightarrow  \mathbb{E}\int_0^T\Big( \int_0^t\int_{\mathcal{D}^c} \Gamma_s(z) \tilde{N}(dz,ds),\psi(t)\Big)dt.  
 \end{split}
\end{equation} 
Similarly, using Holder's inequality it can be shown that for each $i=1,2,\ldots,k$, the Bochner integral is a bounded linear operator from $L^{\frac{\alpha_i}{\alpha_i-1}}((0,T)\times\Omega;V^*_i)$
 to $L^{\frac{\alpha_i}{\alpha_i-1}}((0,T)\times~\Omega;V^*_i)$ and is thus continuous with respect to weak topologies. Therefore, for any $\psi\in L^{\alpha_i}((0,T)\times\Omega;V_i)$,
 \begin{equation}
 \mathbb{E}\int_0^T\Big\langle \int_0^t A_s^i(u^{m_q}_s)ds,\psi(t)\Big\rangle dt\rightarrow  \mathbb{E}\int_0^T\Big\langle \int_0^t  a_s^i ds,\psi(t)\Big\rangle dt.     \label{eq:bochner}
\end{equation}
Fix $n\in \mathbb{N}$. Then for any $\phi \in \mathcal{V}_n$ and an adapted real valued process $\eta_t$ bounded by a constant $C$, we have for any $q\geq n$, 
\begin{equation*}
\begin{split}
\mathbb{E}\int_0^T  \eta_t(u^{m_q}_t,  \phi) & \, dt  =\mathbb{E} \int_0^T \eta_t\Big[(u^{m_q}_0,\phi) + \sum_{i=1}^k \int_0^t \langle A_s^i(u^{m_q}_s),\phi \rangle\, ds \\
& + \sum_{j=1}^\infty \int_0^t (\phi,B_s^j(u^{m_q}_s))dW_s^j + \int_0^t \int_{\mathcal{D}^c} (\phi, \gamma_s(u_s^{m_q},z))\tilde{N}(ds,dz)\Big]\,dt  . 
\end{split}
\end{equation*} 
Taking the limit $q\rightarrow \infty$ and using~\eqref{eq:initial_data},  \eqref{eq:stochastic}, \eqref{eq:stochastic_poisson} and \eqref{eq:bochner}, we obtain
\begin{align}
\mathbb{E}\int_0^T \eta_t(u_t,\phi)\,dt  =  \mathbb{E}\int_0^T & \eta_t\Big[(u_0,\phi)+\sum_{i=1}^k\int_0^t \langle a_s^i,\phi \rangle \, ds\notag \\
& + \sum_{j=1}^\infty \int_0^t (\phi,b^j_s)\,dW_s^j+ \int_0^t \int_{\mathcal{D}^c} (\phi, \Gamma_s(z))\tilde{N}(ds,dz) \Big]\,dt\notag
\end{align} 
with any $\phi \in \mathcal{V}_n$ and any adapted and bounded real valued process $\eta_t$.
Since $\cup_{n\in\mathbb{N}}\mathcal{V}_n$ is dense in $V$, we obtain 
\begin{equation}
u_t=u_0+ \sum_{i=1}^k\int_0^t a_s^ids + \sum_{j=1}^\infty \int_0^t b^j_s dW_s^j+ \int_0^t \int_{\mathcal{D}^c}  \Gamma_s(z)\tilde{N}(ds,dz)  \label{eq:hsol}
\end{equation}
$dt\times \mathbb{P}$ almost everywhere. 
Using Theorem 2.1 on It\^o's formula from \cite{gyongy17}, 
there exists an $H$-valued c\`adl\`ag modification of the process $u$, denoted again by $u$,
which is equal to the right hand side of \eqref{eq:hsol} almost surely for all $t\in[0,T]$.
Moreover~\eqref{eq:itoweak} holds almost surely for all $t\in [0,T]$. 
This completes the proof of part (i) of the lemma. 
It remains to prove part (ii) of  the lemma. 
To that end, consider the sequence of stopping times $\sigma_n$ defined in \eqref{eq:stoptime}.
Using Burkholder--Davis--Gundy inequality together with Cauchy--Schwartz's and Young's inequalities, we obtain
\begin{equation}\label{eq:weakbdg}
\begin{split}
\mathbb{E}\sup_{t\in[0, T]}\Big|\sum_{j=1}^\infty \int_0^{t\wedge \sigma_n} & (u_s, b_s^j)dW_s^j\Big|  \leq 4\mathbb{E}\Big( \sum_{j=1}^\infty \int_0^{T\wedge \sigma_n}|(u_s, b_s^j)|_H^2 ds\Big)^\frac{1}{2} \\
& \leq 4\mathbb{E}\Big( \sum_{j=1}^\infty \int_0^{T\wedge \sigma_n}|u_s|_H^2 |b_s^j|_H^2 ds\Big)^\frac{1}{2}  \\
& \leq 4\mathbb{E}\Big(\sup_{t\in[0, T]}|u_{t\wedge\sigma_n}|_H^2 \sum_{j=1}^\infty \int_0^{T\wedge \sigma_n}|b_s^j|_H^2 ds\Big)^\frac{1}{2}  \\
&\leq \epsilon \mathbb{E}\sup_{t\in[0, T]}|u_{t\wedge\sigma_n}|_H^2+C\mathbb{E}\sum_{j=1}^\infty\int_0^{T\wedge \sigma_n}|b_s^j|_H^2 ds. 
\end{split}
\end{equation}
Similarly,
\begin{equation}\label{eq:weakbdg_poisson}
\begin{split}
\mathbb{E}\sup_{t\in[0, T]}\Big| \int_0^{t\wedge \sigma_n} &
 \int_{\mathcal{D}^c}  (u_s, \Gamma_s(z))  \tilde{N}(ds,dz)\Big| \\
 & \leq C\mathbb{E}\Big(  \int_0^{T\wedge \sigma_n} \int_{\mathcal{D}^c} |(u_s, \Gamma_s(z))|_H^2 \nu(dz)ds\Big)^\frac{1}{2}\, \\
&  \leq C\mathbb{E}\Big(  \int_0^{T\wedge \sigma_n} \int_{\mathcal{D}^c} |u_s|_H^2 |\Gamma_s(z)|_H^2 \nu(dz) ds\Big)^\frac{1}{2} \\
& \leq C\mathbb{E}\Big(\sup_{t\in[0, T]}|u_{t\wedge\sigma_n}|_H^2  \int_0^{T\wedge \sigma_n} \int_{\mathcal{D}^c}|\Gamma_s(z)|_H^2 \nu(dz)ds\Big)^\frac{1}{2}  \\
&\leq \epsilon \mathbb{E}\sup_{t\in[0, T]}|u_{t\wedge\sigma_n}|_H^2+C\mathbb{E} \int_0^{T\wedge \sigma_n} \int_{\mathcal{D}^c} |\Gamma_s(z)|_H^2 \nu(dz) ds. 
\end{split}
\end{equation}
Replace $t$ by $t\wedge \sigma_n$ in~\eqref{eq:itoweak} and take supremum and then expectation.  On using  H\"{o}lder's inequality along with \eqref{eq:weakbdg} and \eqref{eq:weakbdg_poisson}, we obtain  
\begin{equation*}
\begin{split}
\mathbb{E} & \sup_{t\in[0, T]} |u_{t\wedge \sigma_n}|_H^2 \leq \mathbb{E}|u_0|_H^2 +2 \sum_{i=1}^k\Big( \mathbb{E}\int_0^{T}|a_s^i|^\frac{\alpha_i}{\alpha_i-1}ds\Big)^\frac{\alpha_i-1}{\alpha_i}\Big(\mathbb{E}\int_0^{T}|u_s|_{V_i}^{\alpha_i} ds\Big)^\frac{1}{\alpha_i} \\
& +\epsilon \mathbb{E}\sup_{t\in[0, T]}|u_{t\wedge\sigma_n}|_H^2 +C \mathbb{E}\sum_{j=1}^\infty \int_0^T|b_s^j|_H^2ds + C\mathbb{E} \int_0^{T\wedge \sigma_n} \int_{\mathcal{D}^c} |\Gamma_s(z)|_H^2 \nu(dz) ds
\end{split}
\end{equation*} 
which on choosing $\epsilon$ small enough gives
\begin{equation*}
\begin{split}
\mathbb{E}\sup_{t\in[0, T]}|u_{t\wedge \sigma_n}|_H^2  \leq  C \Big[ & \mathbb{E}|u_0|_H^2 +   \sum_{i=1}^k \Big( \mathbb{E}\int_0^{T}|a_s^i|^\frac{\alpha_i}{\alpha_i-1}ds\Big)^\frac{\alpha_i-1}{\alpha_i}\Big(\mathbb{E}\int_0^{T}|u_s|_{V_i}^{\alpha_i} ds\Big)^\frac{1}{\alpha_i} \\
&  +\mathbb{E}\sum_{j=1}^\infty \int_0^T|b_s^j|_H^2ds + \mathbb{E} \int_0^{T\wedge \sigma_n} \int_{\mathcal{D}^c} |\Gamma_s(z)|_H^2 \nu(dz) ds \Big].
\end{split}
\end{equation*} 
Finally taking $n \to \infty$ and using Fatou's lemma, we obtain
$$\mathbb{E}\sup_{t\in[0, T]}|u_t|_H^2<\infty$$
which finishes the proof.
\end{proof}

From now onwards, we will denote the processes $v$ and $u$ by $u$ for notational convenience. 
In order to prove that the process $u$ is the solution of equation \eqref{eq:see_1}, it remains to show that $dt\times \mathbb{P}$ almost everywhere $A^i(v)=a^i$ for $i=1,2,\ldots,k$, $B^j(v)=b^j$ for all $j\in \mathbb{N}$ and $dt\times \mathbb{P} \times \nu$ almost everywhere $\gamma(v)1_{\mathcal{D}^c}=\Gamma 1_{\mathcal{D}^c}$. 
Recall that $\Psi$ and $\rho$ were given in Definition~\ref{def psi}.

\begin{theorem}[Existence of solution] \label{thm:main}
If Assumptions A-\ref{ass:hem} to A-\ref{ass: growth_gamma} hold and $u_0\in L^{p_0}(\Omega; H)$, then the stochastic evolution equation~\eqref{eq:see_1} has a unique solution. Hence, using interlacing procedure, \eqref{eq:see} has a unique solution.
\end{theorem}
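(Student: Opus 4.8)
The uniqueness statement is already contained in Theorem~\ref{thm:unique}, and once existence is established for \eqref{eq:see_1} the interlacing procedure of \cite[Section 4.2]{brz14} transfers well-posedness to \eqref{eq:see}. So the plan is to prove existence of a solution to \eqref{eq:see_1}. Lemmas~\ref{lem:weaklimit} and \ref{lem:limiteq} already furnish a candidate limit $u\in L^2(\Omega;D([0,T];H))\cap\bigcap_{i=1}^k L^{\alpha_i}((0,T)\times\Omega;V_i)$ satisfying \eqref{eq:hsol} together with the It\^o formula \eqref{eq:itoweak}. Hence everything reduces to identifying the weak limits, i.e.\ to showing that $dt\times\mathbb{P}$-almost everywhere $\sum_{i=1}^k a_t^i=\sum_{i=1}^k A_t^i(u_t)$ in $V^*$, that $b_t^j=B_t^j(u_t)$ for each $j$, and that $\gamma_t(u_t,\cdot)\mathbf{1}_{\mathcal{D}^c}=\Gamma_t\mathbf{1}_{\mathcal{D}^c}$, $dt\times\mathbb{P}\times\nu$-almost everywhere. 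I note that only the sum $\sum_i a_t^i$ needs to be identified, since only this sum enters the equation; the intersection structure of $V$ prevents separating the individual $a_t^i$, but this is immaterial.

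The identification is the heart of the proof and I would carry it out by the stochastic Minty--Browder monotonicity argument, reusing the exponential weight $r_t:=e^{-\int_0^t\rho(v_s)\,ds}$ from the proof of Theorem~\ref{thm:unique} to neutralise the locally monotone term. For a fixed admissible test process $v\in\Psi$, Assumption~A-\ref{ass:lmon} applied pointwise to the pair $(u_t^m,v_t)$, multiplied by $r_t\ge 0$, integrated and averaged, gives $I^m\le 0$, where $I^m$ is the expectation of the time-integral of
\[
r_t\Big(2\sum_{i}\langle A_t^i(u_t^m)-A_t^i(v_t),u_t^m-v_t\rangle_i+\sum_{j}|B_t^j(u_t^m)-B_t^j(v_t)|_H^2+\int_{\mathcal{D}^c}|\gamma_t(u_t^m,z)-\gamma_t(v_t,z)|_H^2\nu(dz)-\rho(v_t)|u_t^m-v_t|_H^2\Big).
\]
Expanding splits $I^m$ into a diagonal part $G^m$ (the terms quadratic in $u^m$, with full unprojected norms) and a cross part $C^m$ that is linear in $u^m,A^i(u^m),B(u^m),\gamma(u^m)$ and hence converges, by the weak convergences of Lemma~\ref{lem:weaklimit}, to the corresponding expression in $u,a^i,b,\Gamma$. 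Applying the It\^o formula of \cite{gyongy17} to $r_t|u_t^m|_H^2$ produces, in the It\^o-correction terms, the projected quantities $\sum_{j\le m}|\Pi_m B_t^j(u_t^m)|_H^2$ and $\int_{\mathcal{D}^c}|\Pi_m\gamma_t(u_t^m,z)|_H^2\nu(dz)$, where $\Pi_m$ is the projection onto $\mathcal{V}_m$; since $|\Pi_m\cdot|_H\le|\cdot|_H$, dropping the projections only increases these terms, so $G^m\ge\mathbb{E}[r_T|u_T^m|_H^2]-\mathbb{E}|u_0^m|_H^2$, the inequality pointing in the favourable direction. Combining $I^m=G^m-C^m\le 0$ with this bound, with the weak lower-semicontinuity $\liminf_m\mathbb{E}[r_T|u_T^m|_H^2]\ge\mathbb{E}[r_T|u_T|_H^2]$ (using $u_T^m\rightharpoonup u_T$ in $L^2(\Omega;H)$ and \eqref{eq:initial_data}) and the weighted energy identity for $u$ obtained from \eqref{eq:itoweak}, letting $m\to\infty$ yields the key inequality
\begin{equation*}
\begin{split}
\mathbb{E}\int_0^T r_t\Big(&2\sum_{i}\langle a_t^i-A_t^i(v_t),u_t-v_t\rangle_i+\sum_{j}|b_t^j-B_t^j(v_t)|_H^2\\
&+\int_{\mathcal{D}^c}|\Gamma_t(z)-\gamma_t(v_t,z)|_H^2\,\nu(dz)-\rho(v_t)|u_t-v_t|_H^2\Big)\,dt\le 0,
\end{split}
\end{equation*}
valid for every admissible test process $v\in\Psi$.

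With this inequality in hand I would conclude in two steps. Taking $v=u$ (admissible by the integrability of $u$) annihilates the $A$-pairing and the weight term and forces $b=B(u)$ and $\Gamma\mathbf{1}_{\mathcal{D}^c}=\gamma(u)\mathbf{1}_{\mathcal{D}^c}$ almost everywhere. Next, choosing $v=u-\varepsilon w$ for a bounded adapted $V$-valued process $w$ and $\varepsilon>0$, and using the just-proved identities together with the local Lipschitz bound A-\ref{ass:lmon} to control the $B$- and $\gamma$-terms (which are then $O(\varepsilon^2)$ and vanish after dividing by $\varepsilon$ and letting $\varepsilon\to0$), the hemicontinuity and resulting demicontinuity of the $A_t^i$ (Assumption~A-\ref{ass:hem} and Remark~\ref{rem:demicont}) give $\mathbb{E}\int_0^T r_t\sum_i\langle a_t^i-A_t^i(u_t),w_t\rangle_i\,dt\le 0$; replacing $w$ by $-w$ upgrades this to equality, and density of $\cup_n\mathcal{V}_n$ in $V$ yields $\sum_i a_t^i=\sum_i A_t^i(u_t)$ in $V^*$, $dt\times\mathbb{P}$-almost everywhere. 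Thus $u$ solves \eqref{eq:see_1}, and combined with Theorem~\ref{thm:unique} and the interlacing procedure this completes the proof.

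The main obstacle is precisely this identification of the weak limits. Within it, the two delicate points are the bookkeeping of the Galerkin projection $\Pi_m$ in the quadratic It\^o-correction terms, which I resolve by exploiting that $\Pi_m$ does not increase the $H$-norm so that the energy inequality points the correct way, and the establishment of the weak convergence $u_T^m\rightharpoonup u_T$ in $L^2(\Omega;H)$ of the terminal values needed for the lower-semicontinuity step; the local monotonicity, the exponential weight $r_t$, and the fact that only $\sum_i a^i$ (not the separate $a^i$) must be matched are what make the Minty argument go through in the intersection-space, L\'evy-driven setting.
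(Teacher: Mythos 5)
Your strategy is essentially the paper's own: Galerkin approximation \eqref{eq:sde}, the a priori bounds of Lemma~\ref{lem:galbound}, the weak limits of Lemmas~\ref{lem:weaklimit} and \ref{lem:limiteq}, and then the weighted Minty--Browder identification with the factor $e^{-\int_0^t\rho(\cdot)\,ds}$, concluded in the same two stages (take the test process equal to $u$ to identify $b$ and $\Gamma$, then perturb by $\varepsilon w$ and use hemicontinuity for the drift), with uniqueness from Theorem~\ref{thm:unique} and interlacing at the end. Two of your points are in fact handled more carefully than in the paper's text: the bookkeeping of the Galerkin projection $\Pi_m$ in the It\^o-correction terms (the paper writes an equality with unprojected infinite sums, where only your inequality, pointing in the favourable direction, is available), and the observation that only the sum $\sum_i a^i$ can, and need, be identified in $V^*$.

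There is, however, one genuine gap, and it sits exactly where your execution deviates from the paper's. You pass to the limit at the fixed terminal time $T$, which requires $u^{m_q}_T\rightharpoonup u_T$ in $L^2(\Omega;H)$; you flag this as a delicate point but never establish it --- it appears only inside the parenthesis ``(using $u^m_T\rightharpoonup u_T$ \dots)'' and is then restated, unresolved, in your closing sentence. This convergence is not automatic: Lemma~\ref{lem:weaklimit} identifies limits only $dt\times\mathbb{P}$-almost everywhere, and $u_T$ is defined through the c\`adl\`ag modification produced a posteriori in Lemma~\ref{lem:limiteq}, so a priori the terminal values could converge weakly to something else. The step can be closed: by Lemma~\ref{lem:galbound} the sequence $(u^{m_q}_T)$ is bounded in $L^2(\Omega;H)$, hence has a weak subsequential limit, which one identifies with $u_T$ by testing \eqref{eq:sde} at $t=T$ against $\zeta\phi$ with $\zeta\in L^\infty(\Omega,\mathscr{F}_T;\mathbb{R})$ and $\phi\in\cup_n\mathcal{V}_n$, passing to the limit in each term (Bochner and stochastic integrals evaluated at the fixed time $T$ are bounded linear maps into $L^2(\Omega;H)$, hence weakly continuous), using that $u$ satisfies \eqref{eq:hsol} at $t=T$, and invoking density of such $\zeta\phi$. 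Alternatively one avoids the issue entirely, as the paper does, by integrating the weighted energy relation in $t$ over $[0,T]$ \emph{before} letting $q\to\infty$ (this is the reason for the double integral in \eqref{eq:limitB}); then only weak lower semicontinuity of $w\mapsto\mathbb{E}\int_0^T e^{-\int_0^t\rho(\psi_s)ds}|w_t|_H^2\,dt$ with respect to weak convergence in $L^2((0,T)\times\Omega;H)$ is needed, and no fixed-time convergence enters. A further minor point: your claim that Assumption A-\ref{ass:lmon} makes the $B$- and $\gamma$-difference terms $O(\varepsilon^2)$ is not what that assumption yields, since it controls these terms only jointly with the $A$ cross term; but this is harmless, because those terms are nonnegative and sit on the favourable side of the inequality, so they may simply be dropped, which is what the paper does.
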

\begin{proof}
Let $\psi \in \cap_{i=1}^kL^{\alpha_i}((0,T)\times \Omega ; V_i)\cap \Psi \cap L^2(\Omega ; D([0,T]; H))$, where $\Psi$ is defined in Definition \ref{def psi}. Then using the product rule and It\^{o}'s formula, we obtain
\begin{equation} \label{eq:ito1}
\begin{split}
\mathbb{E}\big(& e^{-\int_0^t \rho(\psi_s)ds} |u_t|_H^2\big) -\mathbb{E}(|u_0|_H^2)  = \mathbb{E}\Big[\int_0^te^{-\int_0^s \rho(\psi_r)dr} \Big(2\sum_{i=1}^k\langle a_s^i,u_s\rangle_i \\
&  +\sum_{j=1}^\infty |b^j_s|_H^2 +  \int_{\mathcal{D}^c} |\Gamma_s(z)|_H^2 \nu(dz)-\rho(\psi_s)|u_s|_H^2\Big )ds\Big] 
\end{split}
\end{equation}
and
\begin{equation*} \label{eq:ito}
\begin{split}
\mathbb{E}\big( e^{-\int_0^t  \rho(\psi_s)ds} & |u_t^{m_q}|_H^2\big)-\mathbb{E}(|u_0^{m_q}|_H^2)=  \mathbb{E}\Big[\int_0^te^{-\int_0^s \rho(\psi_r)dr} \Big(2 \sum_{i=1}^k \langle A_s^i(u_s^{m_q}),u_s^{m_q}\rangle_i \\
& +\sum_{j=1}^\infty |B_s^j(u_s^{m_q})|_H^2 +\int_{\mathcal{D}^c} |\gamma_s(u_s^{m_q},z)|_H^2 \nu(dz)-\rho(\psi_s)|u_s^{m_q}|_H^2\Big)ds\Big] 
\end{split}
\end{equation*}
for all $t\in [0,T]$.
Note that in view of Remark \ref{rem:welldef}, all the integrals are well defined in what follows.
Moreover,
\begin{equation*}
\begin{split}
\mathbb{E} & \Big[\int_0^te^{-\int_0^s \rho(\psi_r)dr}\Big(2 \sum_{i=1}^k\langle A_s^i(u_s^{m_q}),u_s^{m_q}\rangle_i +\sum_{j=1}^\infty |B_s^j(u_s^{m_q})|_H^2 \\ 
& +\int_{\mathcal{D}^c} |\gamma_s(u_s^{m_q},z)|_H^2 \nu(dz) -\rho(\psi_s)|u_s^{m_q}|_H^2\Big)ds\Big]  \\
= & \mathbb{E}\Big[\int_0^te^{-\int_0^s \rho(\psi_r)dr}\Big(2 \sum_{i=1}^k \langle A_s^i(u_s^{m_q})-A_s^i(\psi_s),u_s^{m_q}-\psi_s\rangle_i +2 \sum_{i=1}^k \langle A_s^i(\psi_s),u_s^{m_q} \rangle_i  \\
& +2 \sum_{i=1}^k \langle A_s^i(u_s^{m_q})-A_s^i(\psi_s),\psi_s\rangle_i +\sum_{j=1}^\infty \big|B_s^j(u_s^{m_q})-B_s^j(\psi_s)\big|_H^2-\sum_{j=1}^\infty|B_s^j(\psi_s)|_H^2 \\
&  +2\sum_{j=1}^\infty\big(B_s^j(u_s^{m_q}),B_s^j(\psi_s)\big) +\int_{\mathcal{D}^c} |\gamma_s(u_s^{m_q},z)-\gamma_s(\psi_s,z)|_H^2 \nu(dz) \\
&-\int_{\mathcal{D}^c} |\gamma_s(\psi_s,z)|_H^2 \nu(dz)+ 2 \int_{\mathcal{D}^c} (\gamma_s(u_s^{m_q},z),\gamma_s(\psi_s,z)) \nu(dz) \\
& -\rho(\psi_s)\left[|u_s^{m_q}-\psi_s|_H^2-|\psi_s|_H^2 
+2(u_s^{m_q},\psi_s)\right]\!\Big)ds\Big] \,.
\end{split}
\end{equation*}
Now one can apply the local monotonicity Assumption A-\ref{ass:lmon} to see that
\begin{equation*}
\begin{split}
& \mathbb{E}\big(e^{-\int_0^t \rho(\psi_s)ds}|u_t^{m_q}|_H^2\big)-\mathbb{E}(|u_0^{m_q}|_H^2) \notag \\
&\leq \mathbb{E}\Big[\int_0^te^{-\int_0^s \rho(\psi_r)dr}\Big( 2 \sum_{i=1}^k\langle A_s^i(\psi_s),u_s^{m_q}\rangle_i+2 \sum_{i=1}^k\langle A_s^i(u_s^{m_q})-A_s^i(\psi_s),\psi_s\rangle_i  \\
&  -\sum_{j=1}^\infty|B_s^j(\psi_s)|_H^2 +2\sum_{j=1}^\infty\big(B_s^j(u_s^{m_q}),B_s^j(\psi_s)\big)-\int_{\mathcal{D}^c}
|\gamma_s(\psi_s,z)|_H^2 \nu(dz) \\
& + 2 \int_{\mathcal{D}^c} (\gamma_s(u_s^{m_q},z),\gamma_s(\psi_s,z)) \nu(dz)+\rho(\psi_s)\big[|\psi_s|_H^2 
 -2(u_s^{m_q},\psi_s)\big] \Big)ds\Big] \,.
 \end{split}
\end{equation*}
Integrating over $t$ from $0$ to $T$, letting $q \rightarrow \infty$ and using the weak lower semicontinuity of the norm we obtain
\begin{equation}\label{eq:weaklimits}
\begin{split}
\mathbb{E} \Big[  \int_0^T & \big(e^{-\int_0^t \rho(\psi_s)ds}|u_t|_H^2-|u_0|_H^2\big)dt\Big] \\
\leq & \liminf_{k\rightarrow\infty}\mathbb{E}\Big[\int_0^T\!\big(e^{-\int_0^t \rho(\psi_s)ds}|u_t^{m_q}|_H^2-|u_0^{m_q}|_H^2\big)dt\Big]  \\
\leq & \mathbb{E}\Big[\int_0^T\int_0^te^{-\int_0^s \rho(\psi_r)dr}\Big( 2 \sum_{i=1}^k\langle A_s^i(\psi_s),u_s\rangle_i+2 \sum_{i=1}^k\langle a_s^i-A_s^i(\psi_s),\psi_s\rangle_i  \\
& -\sum_{j=1}^\infty|B_s^j(\psi_s)|_H^2  +2\sum_{j=1}^\infty(b^j_s,B_s^j(\psi_s))-\int_{\mathcal{D}^c}
|\gamma_s(\psi_s,z)|_H^2 \nu(dz) \\
& + 2 \int_{\mathcal{D}^c} (\Gamma_s(z),\gamma_s(\psi_s,z)) \nu(dz)+\rho(\psi_s)\left[|\psi_s|_H^2 
 -2(u_s,\psi_s)\right] \Big)dsdt\Big].  
 \end{split}
\end{equation}
Integrating from $0$ to $T$ in~\eqref{eq:ito1} and combining this 
with~\eqref{eq:weaklimits} leads to
\begin{equation} \label{eq:limitB}
\begin{split}
\mathbb{E} \Big[\int_0^T & \int_0^t e^{-\int_0^s \rho(\psi_r)dr}\Big( 2 \sum_{i=1}^k\langle a_s^i-A_s^i(\psi_s),u_s-\psi_s \rangle_i +\sum_{j=1}^\infty|B_s^j(\psi_s)-b^j_s|_H^2  \\
 & + \int_{\mathcal{D}^c} |\gamma_s(\psi_s,z)-\Gamma_s(z)|_H^2 \nu(dz)-\rho(\psi_s)|u_s-\psi_s|_H^2  \Big)dsdt \Big]\leq 0. 
 \end{split}
\end{equation}
Further, using the Definition~\ref{def psi} and Lemma~\ref{lem:weaklimit},
\[
u\in \cap_{i=1}^k L^{\alpha_i}((0,T)\times \Omega ; V_i)\cap \Psi \cap L^2(\Omega ; D([0,T]; H))\,.
\]
Taking $\psi = u$ in \eqref{eq:limitB}, we obtain that 
$B^j(u)=b^j$ for all $j\in \mathbb{N}$ and $\gamma(u)1_{\mathcal{D}^c}=\Gamma1_{\mathcal{D}^c} \,.$ 
Let $\eta\in L^\infty((0,T)\times \Omega;\mathbb{R})$, $\phi\in V$, 
$\epsilon\in (0,1)$ and let $\psi=u-\epsilon \eta \phi$. Then
from~\eqref{eq:limitB} we obtain that,
\begin{equation*} \label{eq:limitA}
\begin{split}
\mathbb{E}\Big[\int_0^T \!\! \int_0^t & e^{-\int_0^s \rho(u_r-\epsilon \eta_r \phi)dr}\Big( 
2\epsilon \sum_{i=1}^k\langle a_s^i-A_s^i(u_s-\epsilon \eta_s \phi),\eta_s \phi \rangle_i \notag \\
&\quad-\epsilon^2 \rho(u_s-\epsilon \eta_s \phi)|\eta_s \phi|_H^2  \Big)dsdt\Big]\leq 0.
\end{split} 
\end{equation*}
Now we divide by $\epsilon$ and let $\epsilon\rightarrow  0$. Then,  
using Lebesgue dominated convergence theorem and 
Assumption A-\ref{ass:hem} we get,
\begin{equation*}
\begin{split}
&\mathbb{E}\Big[\int_0^T \int_0^t e^{-\int_0^s \rho(u_r)dr}
2 \eta_s \sum_{i=1}^k\langle a_s^i-A_s^i(u_s),\phi \rangle_i dsdt\Big]\leq 0. 
\end{split}
\end{equation*}
Since this holds for any $\eta\in L^\infty((0,T)\times \Omega ;\mathbb{R})$ and $\phi\in V$, we get that $A^i(u)= a^i$ for all $i=1,2,\ldots,k$ which concludes the proof.
\end{proof}

\section{Stochastic anisotropic $p$-Laplace equation} \label{sec:app}
In this section, we prove Theorem~\ref{thm:aniso_p_Laplace} by showing that stochastic anisotropic $p$-Laplace equation \eqref{eq:anisotropic}, in its weak form, fits in the abstract framework discussed in previous section and hence possesses a unique solution which depends continuously on the initial data.

\begin{proof}[Proof of Theorem~\ref{thm:aniso_p_Laplace}]
For $i=1,2, \ldots,d$, take $V_i:=W_0^{x_i,p_i}(\mathscr{D})$ defined in Section~\ref{sec:introduction} so that the space $V$  is the space $W_0^{1,\mathbf{p}}(\mathscr{D})$ given by \eqref{eq:aniso_sobolev_space}.  
Again for $i=1,2, \ldots,d$, let $A^i:V_i \to V_i^*$ be given by,
\begin{equation*}
A^i(u):=D_i\big(|D_iu|^{p_i-2}D_iu\big)\,.
\end{equation*}
Further, let $B^j:V \to L^2(\mathscr{D})$ be given by,
\begin{equation*}
 B^j(u):= 
 \begin{cases}
 \zeta_j |D_ju|^\frac{p_j}{2}+h_j(u) & \text{for} \ j=1,2,\ldots,d, \\
 h_j(u) & \text{otherwise}. 
 \end{cases}
  \end{equation*}
We note that for $u,v \in V_i$,
\begin{equation}  \label{eq:A_i}
\langle A_i(u),v \rangle_i =-\int_{\mathscr{D}}|D_iu(x)|^{p_i-2}D_iu(x)D_iv(x) dx
\end{equation}
and thus using H\"older's inequality,
\begin{equation*} 
\big| \langle A_i(u),v \rangle_i\big| \leq |u|_{V_i}^{p_i-1}|v|_{V_i}\,.
\end{equation*}
Thus, for every $u\in V^i$, $A^i(u)$ is a well-defined linear operator on $V_i$ such that 
\[
|A_iu|_{V_i^*}\leq |u|_{V_i}^{p_i-1}
\]
which implies that Assumptions A-\ref{ass:hem} and A-\ref{ass:groA} hold with $\alpha_i=p_i$ and $\beta=0$. 

We now verify the local monotonicity condition. From standard calculations for $p$-Laplace operators we obtain for each $i=1,2,\ldots,d$,
\[
\big \langle  D_i\big(|D_iu|^{p_i-2}D_iu\big)- D_i\big(|D_iv|^{p_i-2}D_iv\big),u-v \big\rangle_i + \big|\zeta_i|D_iu|^\frac{p_i}{2}-\zeta_i|D_iv|^\frac{p_i}{2}\big|_{L^2}^2 \leq 0
\]
provided $\zeta_i^2 \leq \frac{4(p_i-1)}{p_i^2}$. Since the functions $h_j,\, j\in\mathbb{N}$ are given to be Lipschitz continuous with Lipschitz constants  $M_j$ such that $(M_j)_j\in \ell^2$, we have
\[
|h_j(u)-h_j(v)|_{L^2}^2 \leq M_j^2|u-v|^2_{L^2}\, .
\]
 Using \eqref{eq:gamma_1}, we get
\begin{equation*}
\int_{\mathcal{D}^c}|\gamma(u,z)-\gamma(v,z)|_{L^2}^2 \, \nu(dz) \leq K |u-v|_{L^2}^2 \, .
\end{equation*}
Therefore,
\begin{equation*}
\begin{split}
2 \sum_{i=1}^d  \langle A^i(u)-A^i(v),u-v \rangle_i+ \sum_{j=1}^\infty|B^j(u)-B^j(v)|_{L^2}^2 + & \int_{\mathcal{D}^c}|\gamma(u,z)-\gamma(v,z)|_{L^2}^2 \nu(dz)\\
& \leq C |u-v|_{L^2}^2
\end{split}	
\end{equation*}
and hence Assumption A-\ref{ass:lmon} is satisfied. 

We now wish to verify the $p_0$-stochastic coercivity condition A-\ref{ass:coer}. However, in view of Remark~\ref{rem:seminorm}, it is enough to verify Assumption A-\ref{ass:coer_seminorm} instead. Taking $v=u$ in \eqref{eq:A_i}, we get
\begin{equation*}
 \langle  A^i(u),u \rangle_i  =-\int_{\mathscr{D}}|D_iu(x)|^{p_i} dx \, .
\end{equation*}
Further, 
\begin{equation*}
\begin{split}
2(p_0-1)\big|\zeta_i|D_iu|^\frac{p_i}{2}\big|_{L^2}^2
& = 2(p_0-1)\zeta_i^2 \int_\mathscr{D} |D_iu(x)|^{p_i}dx .
\end{split}
\end{equation*}
Also, \eqref{eq:gamma_2} gives
\begin{equation*}
\int_{\mathcal{D}^c}|\gamma(u,z)|_{L^2}^2 \, \nu(dz) \leq K (1+|u|_{L^2}^2) \, .
\end{equation*}
Choose $\zeta_i^2< \frac{1}{(p_0-1)}$, so that $\theta_i:=2-2(p_0-1)\zeta_i^2>0$. Then taking  $\theta$ to be the minimum of $\theta_1, \theta_2,\ldots, \theta_d$ we have, 
\begin{equation*} \label{eq:coer_aniso}
\begin{split}
2\sum_{i=1}^d \langle  A^i(u),u \rangle_i+ (p_0-1)\sum_{i=1}^\infty |B^i(u)|_{L^2}^2+\theta \sum_{i=1}^d [u]_{V_i}^{p_i}+ \int_{\mathcal{D}^c}|\gamma(u,z)|_{L^2}^2 \nu(dz) \\
\leq  C (1+|u|_{L^2}^2) \, 
\end{split}
\end{equation*}
where, $[u]_{V_i}^{p_i}:= \int_\mathscr{D} |D_iu(x)|^{p_i}dx$ and thus Assumption A-\ref{ass:coer_seminorm} is satisfied.
Finally, we need to verify Assumption A-\ref{ass: growth_gamma}. Using \eqref{eq:gamma_3}, we have
\begin{equation*}
\int_{\mathcal{D}^c}|\gamma(u,z)|_{L^2}^{p_0} \, \nu(dz) \leq K (1+|u|_{L^2}^{p_0})
\end{equation*}
as desired. Since $u_0\in L^{p_0}(\Omega;L^2(\mathscr{D}))$, in view of Remark~\ref{rem:seminorm} along with Theorems~\ref{thm:apriori}, \ref{thm:unique} and \ref{thm:main}, stochastic anisotropic $p$-Laplace equation \eqref{eq:anisotropic} has a unique solution.

We now show the continuous dependence of the solution on the initial data by proving  \eqref{eq:estimates_aniso}. For this, we show that operators in \eqref{eq:anisotropic} satisfy the strong monotonicity Assumption A-\ref{ass:strong_mono}.
Using the inequality
\[
(|a|^ra-|b|^rb)(a-b) \geq 2^{-r} |a-b|^{r+2} \quad \forall \,\,r \geq 0 , \, a,b \in \mathbb{R} ,
\]  
we have for each $i=1,2,\ldots,d$,
\[
\big \langle  D_i\big(|D_iu|^{p_i-2}D_iu\big)- D_i\big(|D_iv|^{p_i-2}D_iv\big),u-v \big\rangle_i \leq -2^{-(p_i-2)} |D_iu-D_iv|_{L^{p_i}}^{p_i} \, .
\]
Further as discussed above,
\[
\big \langle  D_i\big(|D_iu|^{p_i-2}D_iu\big)- D_i\big(|D_iv|^{p_i-2}D_iv\big),u-v \big\rangle_i + 2(p_0-1)\big|\zeta_i|D_iu|^\frac{p_i}{2}-\zeta_i|D_iv|^\frac{p_i}{2}\big|_{L^2}^2 \leq 0
\]
provided $\zeta_i^2 \leq \frac{2(p_i-1)}{p_i^2(p_0-1)}$. Thus we have for $u,v \in W_0^{1,\mathbf{p}}(\mathscr{D})$,
\begin{equation} \label{eq:strong_mono}
\begin{split}
2 & \sum_{i=1}^d  \langle A^i(u)-A^i(v), u-v \rangle_i +  (p_0-1)\sum_{j=1}^\infty|B^j(u)-B^j(v)|_{L^2}^2 \\ & +  \int_{\mathcal{D}^c}|\gamma(u,z)-\gamma(v,z)|_{L^2}^2 \nu(dz)
\leq  -\theta ' \sum_{i=1}^d |D_iu-D_iv|_{L^{p_i}}^{p_i}+ C |u-v|_{L^2}^2 
\end{split}	
\end{equation}
for any $\theta'$ satisfying $0<\theta'< 2^{-(p_i-2)}$ for all $i$.
Thus,  \eqref{eq:estimates_aniso} follows from Theorem \ref{thm:well-posedness}. 
This concludes the proof of Theorem~\ref{thm:aniso_p_Laplace} and hence establishes the well-posedness of stochastic anisotropic $p$-Laplace equation \eqref{eq:anisotropic}.
\end{proof}

\section{Example} \label{sec:example}

Finally, in this section, we present an example of stochastic evolution equation 
which fits in the framework of this article and yet does not satisfy the assumptions of  \cite{brz14, krylov81} or \cite{rockner10}.
For that we introduce few more notations.

Let $W^{1,p}(\mathscr{D})$ be the Sobolev space of real valued functions $u$,
defined on $\mathscr{D}$, such that the norm
\[
|u|_{1,p}:=\Big(\int_\mathscr{D} \big(|u(x)|^p + |\nabla u(x)|^p\big)\, dx\Big)^\frac{1}{p}
\]
is finite, where $\nabla:=(D_1,D_2,\ldots,D_d)$ denotes the gradient.

The closure of $C_0^\infty(\mathscr{D})$ in $W^{1,p}(\mathscr{D})$ with respect to the norm $|\cdot|_{1,p}$ is denoted by  
$W_0^{1,p}(\mathscr{D})$. 
Friedrichs' inequality (see, e.g. Theorem 1.32 in \cite{roubicek05}) implies
that the norm
\[
|u|_{W_0^{1,p}}:=\Big(\int_\mathscr{D} |\nabla u(x)|^p\,dx\Big)^\frac{1}{p} 
\]
is equivalent to $|u|_{1,p}$
and this  equivalent norm $|u|_{W_0^{1,p}}$ will be used in what follows. 
Moreover, let $W^{-1,p}(\mathscr{D})$ denote the dual of $W^{1,p}_0(\mathscr{D})$
and let $|\cdot|_{W^{-1,p}}$ be the norm on this dual space.
It is well known that 
\[ 
W_0^{1,p}(\mathscr{D}) \hookrightarrow L^2(\mathscr{D})\equiv (L^2(\mathscr{D}))^\ast \hookrightarrow W^{-1,p}(\mathscr{D}),
\]
where $\hookrightarrow$ denotes continuous and dense embeddings, 
is a Gelfand triple.

\begin{example}[Quasi-linear equation] 
\label{ex1}
Let $p_1, p_2 > 2$. Assume that there are constants $r,s,t \geq 1$ and continuous function $f^0 $ on $\mathbb{R}$ such that
\begin{equation*}
\begin{split}
& \qquad f^0(x)x \leq  K(1+|x|^{\frac{p_1}{2}+1}); \,\,\, |f^0(x)|\leq K(1+|x|^r) \\
& \text{ and }\,\, 
(f^0(x)-f^0(y))(x-y) \leq K(1+|y|^s)|x-y|^t\,\,\, \forall \, \, x,y\in \mathbb{R}\,.
\end{split}
\end{equation*} 
Let  $h_j:\mathbb{R}\to \mathbb{R},\,j\in\mathbb{N}$ be Lipschitz continuous functions with Lipschitz constants $M_j$ such that the sequence $(M_j)_j\in \ell^2$.
Further, let $Z=\mathbb{R}^d$, $\mathcal{D}^c=\{z\in\mathbb{R}^d: |z| \leq 1 \}$ and $\nu$ be a L\'evy measure on $\mathbb{R}^d$. 
Finally assume that 
 $\gamma:[0,T] \times \Omega \times \mathbb{R} \times Z \to Z$ satisfies
\[
|\gamma_t(x,z)-\gamma_t(y,z)|\leq K|x-y||z| \,\,\, \text{and} \,\,\, |\gamma_t(x,z)| \leq K(1+|x|)|z| 
\]
almost surely, for all $t\in[0,T],\,\, x,y \in \mathbb{R},\,\, z\in \mathcal{D}^c$.

Consider the stochastic partial differential equation,
\begin{equation}                                                 \label{ex:1}
\begin{split}
 du_t  =  & \Big(  \sum_{\ell=1}^d   D_\ell\big(|D_\ell u_t|^{p_1-2}D_\ell u_t\big) - |u_t|^{p_2-2}u_t + f^0(u_t) \Big)\,dt + \sum_{j=1}^d  \zeta |D_j u_t|^\frac{p_1}{2} dW_t^j 
\\
& + \sum_{j=1}^\infty  h_j (u_t) dW_t^j +\int_{\mathcal{D}^c}\gamma_t(u_t,z)\tilde{N}(dt,dz)+\int_{\mathcal{D}}\gamma_t(u_t,z)N(dt,dz)
\end{split}     
\end{equation}
on $(0,T)\times \mathscr{D}$, where $u_t = 0$ on $\partial \mathscr{D}$ and 
$u_0$ is a given $\mathcal{F}_0$-measurable random variable. Moreover, $W^j$ are independent Wiener processes.
We will now  show that such an equation, in its weak form, fits the assumptions
of the present article if any of the following holds:
\begin{enumerate}[1.]
\item
 $d<p_1,\, r=p_1+1,\, s\leq p_1, \, t=2$ and $u_0\in L^6(\Omega;L^2(\mathscr{D}))$.
\item
 $d>p_1,\, r=\frac{2 p_1}{d}+p_1-1, \, s\leq \min \Big \{\frac{p_1^2(t-2)}{(d-p_1)(p_1-2)}, \frac{p_1(p_1-t)}{(p_1-2)}\Big\},\, 2<t<p_1$ and $u_0\in L^6(\Omega;L^2(\mathscr{D}))$.
\end{enumerate}
\noindent \textbf{Case 1.}
Take $V_1:=W^{1,p_1}_0(\mathscr{D})$, $V_2:=L^{p_2}(\mathscr{D})$ and $V:=V_1\cap V_2$.
Then $(V_i, |\cdot|_{V_i})$ are reflexive and separable Banach spaces such that
\[ 
V \hookrightarrow L^2(\mathscr{D})\equiv (L^2(\mathscr{D}))^\ast \hookrightarrow V^*.
\]
Let $A^1:V_1 \to V_1^*$ and $A^2:V_2 \to V_2^*$ be given by,
 \[
A^1(u):= \sum_{\ell=1}^d D_\ell \big(|D_\ell u|^{p_1-2}D_\ell u\big) + f^0(u) \text{ and } A^2(u):= - |u|^{p_2-2}u \, .
\]
Moreover, $B^j:V \to L^2(\mathscr{D})$ be given by
\begin{equation*}
 B^j(u):= 
 \begin{cases}
 \zeta |D_ju|^\frac{p_1}{2}+h_j(u) & \text{for} \ j=1,2,\ldots,d, \\
 h_j(u) & \text{otherwise} \,.
 \end{cases}
  \end{equation*}
The next step is to show that these operators satisfy
the Assumptions A-\ref{ass:hem} to A-\ref{ass: growth_gamma}.
We immediately notice that A-\ref{ass:hem} holds since $f^0$ is continuous. 

We now wish to verify the local monotonicity condition. As discussed earlier, for each $\ell=1,2,\ldots,d$
\[
\big \langle  D_\ell \big(|D_ \ell u|^{p_1-2}D_\ell u \big)- D_\ell \big(|D_\ell v|^{p_1-2}D_\ell v \big),u-v \big\rangle_1 + \big|\zeta |D_\ell u|^\frac{p_1}{2}-\zeta|D_\ell v|^\frac{p_1}{2}\big|_{L^2}^2 \leq 0
\]
provided $\zeta^2 \leq \frac{4(p_1-1)}{p_1^2}$.
Since the function $-|x|^{p_2-2}x$ is monotonically decreasing, we get
\[
\langle -|u|^{p_2-2}u + |v|^{p_2-2}v, u-v \rangle_2 \leq 0. 
\]
Further for $d<p_1$, by Sobolev embedding we have $V_1 \subset L^\infty(\mathscr{D}) $  and therefore using the assumptions imposed on $f_0$ taking $t=2$, we observe that
for $u,v\in V$ 
\begin{equation*}
\begin{split}
\langle f^0(u)-f^0(v), & u-v \rangle_1  \leq K \int_{\mathscr{D}} (1+|v(x)|^s)|u(x)-v(x)|^2 dx \\
& \leq K (1+|v|^s_{L^\infty})|u-v|_{L^2}^2  \leq C (1+|v|^{p_1}_{V_1})|u-v|_{L^2}^2 
\end{split}
\end{equation*}
for $s\leq p_1$.
Using Lipschitz continuity of the functions $h_j,\, j\in\mathbb{N}$, we have
\[
|h_j(u)-h_j(v)|_{L^2}^2 \leq M_j^2|u-v|^2_{L^2}\, ,
\]
where $M_j$ are the Lipschitz constants such that $(M_j)_j\in \ell^2$.
Again using assumptions imposed on $\gamma$  and the fact that $\nu$ is a L\'evy measure, we have
\begin{equation*}
\begin{split}
\int_{\mathcal{D}^c}|\gamma(u,z)-\gamma(v,z)|_{L^2}^2 & \nu(dz)
\leq \int_{\mathcal{D}^c} \int_{\mathscr{D}} |u(x)-v(x)|^2|z|^2 dx \nu(dz) \\
& = K \int_{\mathcal{D}^c} |z|^2 \nu(dz) \int_{\mathscr{D}} |u(x)-v(x)|^2 dx \leq C |u-v|_{L^2}^2 \, .
\end{split}
\end{equation*}
Therefore, we have for all $u,v\in V$
\begin{equation*}
\begin{split}
 2 \sum_{i=1}^2  \langle A^i(u)-A^i(v), & u-v \rangle_i + \sum_{j=1}^\infty|B^j(u)-B^j(v)|_{L^2}^2 + \int_{\mathcal{D}^c}|\gamma(u,z)-\gamma(v,z)|_{L^2}^2 \nu(dz)\\
&  \leq C \Big(1+|v|^{p_1}_{V_1}\Big)|u-v|_{L^2}^2 \leq C \Big(1+\sum_{i=1}^2|v|^{p_i}_{V_i}\Big)|u-v|_{L^2}^2 .
\end{split}	
\end{equation*}
Hence Assumption A-\ref{ass:lmon} is satisfied with $\alpha_i:=p_i\,\, (i=1,2)$ and $\beta := 0$.
Again,
\begin{equation*}
\begin{split}
2 \sum_{\ell=1}^d \big\langle & D_\ell\big(|D_\ell u|^{p_1-2}D_\ell u \big),u \big\rangle_1    =-2 \sum_{\ell=1}^d  \int_{\mathscr{D}} |D_\ell u(x)|^{p_1} dx =-2 |u|_{V_1}^{p_1}
\end{split}
\end{equation*}
and similarly,
\[
2 \langle -|u|^{p_2-2}u,u \rangle _2 = -2|u|_{V_2}^{p_2}.
\]
Moreover using assumptions on $f^0$ and Sobolev embedding, we get
\begin{equation*}
\begin{split}
2  \langle f^0(u),u \rangle_1 & \leq  K \int_{\mathscr{D}} (1+|u(x)|^{\frac{p_1}{2}+1})dx \leq K(1+|u|_{L^\infty}^\frac{p_1}{2}|u|_{L^2}) \\
& \leq C(1+|u|_{V_1}^\frac{p_1}{2}|u|_{L^2}) \leq \delta |u|_{V_1}^{p_1}+C(1+|u|_{L^2}^2),
\end{split}
\end{equation*}
where last inequality is obtained using Young's inequality with sufficiently small $\delta >0$. Further, for any $p_0>2$
\[
(p_0-1)\sum_{j=1}^d |\zeta |D_j u|^\frac{p_1}{2}|_{L^2}^2=(p_0-1)\zeta^2 \sum_{j=1}^d\int_{\mathscr{D}} |D_j u(x)|^{p_1} dx=(p_0-1)\zeta^2 |u|_{V_1}^{p_1}.
\]
Furthermore, using assumptions on $\gamma$ and the fact that $\nu$ is a L\'evy measure on $\mathbb{R}^d$, we get
\begin{equation*}
\begin{split}
\int_{\mathcal{D}^c}|\gamma(u,z)|_{L^2}^2 \nu(dz)
& \leq K \int_{\mathcal{D}^c} \int_{\mathscr{D}} |1+u(x)|^2|z|^2 dx \nu(dz) \\
& = K \int_{\mathcal{D}^c} |z|^2 \nu(dz) \int_{\mathscr{D}} |1+ u(x)|^2 dx 
\leq C (1+|u|_{L^2}^2) \, .
\end{split}
\end{equation*}
Choose $\zeta^2< \frac{2-\delta}{(p_0-1)}$, so that $\theta:= 2-(p_0-1)\zeta^2-\delta>0$. Then  we have, 
\begin{equation*} \label{eq:coer_aniso}
\begin{split}
2\sum_{i=1}^2 \langle  A^i(u),u \rangle_i+ (p_0-1)\sum_{j=1}^\infty |B^j(u)|_{L^2}^2+\theta \sum_{i=1}^d |u|_{V_i}^{p_i}+ \int_{\mathcal{D}^c}|\gamma(u,z)|_{L^2}^2 \nu(dz) \\
\leq  C (1+|u|_{L^2}^2) \, .
\end{split}
\end{equation*}
Hence Assumption A-\ref{ass:coer} is satisfied with $\alpha_i:=p_i\,\, (i=1,2)$.
Again, using the assumptions on $\gamma$ and H\"olders's inequality, we have
\begin{equation*}
\begin{split}
&\int_{\mathcal{D}^c}|\gamma(u,z)|_{L^2}^{p_0} \nu(dz) = \int_{\mathcal{D}^c} \Big(\int_{\mathscr{D}} |\gamma(u(x),z)|^2dx \Big)^\frac{p_0}{2} \nu(dz) \\
& \leq K \int_{\mathcal{D}^c} \Big(\int_{\mathscr{D}} |1+u(x)|^2|z|^2 dx\Big)^\frac{p_0}{2} \nu(dz) = K \int_{\mathcal{D}^c} |z|^{p_0} \nu(dz)\Big( \int_{\mathscr{D}} |1+ u(x)|^2 dx\Big)^\frac{p_0}{2} \\
& \leq  C \int_{\mathcal{D}^c} |z|^2 \nu(dz)\Big[1+ \Big( \int_{\mathscr{D}} |u(x)|^2 dx\Big)^\frac{p_0}{2}\Big] \leq C (1+|u|_{L^2}^{p_0}) \, 
\end{split}
\end{equation*}
and hence Assumption A-\ref{ass: growth_gamma} is satisfied.
Note that using H\"older's inequality, we get for $u,v \in V_1$
\begin{equation*}
\begin{split}
\int_{\mathscr{D}} & |D_\ell u(x)|^{p_1-1} |D_\ell v(x)|dx  \leq \Big(\int_{\mathscr{D}} |D_\ell u(x)|^{p_1} dx\Big)^\frac{p_1-1}{p_1}\Big(\int_{\mathscr{D}} |D_\ell v(x)|^{p_1} dx\Big)^\frac{1}{p_1} \\
& \leq \Big(\sum_{\ell=1}^d\int_{\mathscr{D}} |D_\ell u(x)|^{p_1} dx\Big)^\frac{p_1-1}{p_1}\Big(\sum_{\ell=1}^d\int_{\mathscr{D}} |D_\ell v(x)|^{p_1} dx\Big)^\frac{1}{p_1} = |u|_{V_1}^{p_1-1} |v|_{V_1} \, .
\end{split}
\end{equation*}
Further using assumption on $f^0$ taking $r=p_1+1$, H\"older's inequality, Gagliardo--Nirenberg inequality and Sobolev embedding,
\begin{equation*}
\begin{split}
& \int_{\mathscr{D}} |f^0(u(x))||v(x)|dx \leq K \int_{\mathscr{D}} \big(1+|u(x)|^{p_1+1}\big)|v(x)|dx \\
& \leq K |v|_{L^2}+ K|v|_{L^\infty}|u|_{L^{p_1+1}}^{p_1+1} \leq K |v|_{V_1}(1+ |u|_{L^\infty}^{p_1-1}|u|_{L^2}^2) \leq K |v|_{V_1}(1+ |u|_{V_1}^{p_1-1}|u|_{L^2}^2)
\end{split}
\end{equation*}
and hence
\[
|A^1(u)|_{V_1^*} \leq K |u|_{V_1}^{p_1-1} + K(1+ |u|_{V_1}^{p_1-1}|u|_{L^2}^2) \leq K(1+ |u|_{V_1}^{p_1-1})(1+|u|_{L^2}^2) \, .
\]
Again, using H\"older's inequality 
\[
|A^2(u)|_{V_2^*} \leq K |u|_{V_2}^{p_2-1}\, ,
\]
which implies that Assumption A-\ref{ass:groA} holds with $\alpha_i:=p_i\,\, (i=1,2)$ and $\beta=\frac{2p_1}{p_1-1}<4$. Thus taking $p_0=6$ and $u_0\in L^6(\Omega;L^2(\mathscr{D}))$,  in view of Theorems~\ref{thm:apriori}, \ref{thm:unique} and \ref{thm:main}, equation~\eqref{ex:1} has a unique solution
and moreover for any $p < 6$ we have,
\[
\mathbb{E} \Big( \sup_{t \in [0,T]} |u_t|_{L^2}^p + \sum_{i=1}^2 \int_0^T |u_t|_{V_i}^{\alpha_i} dt \Big) < C \left(1+\mathbb{E}|u_0|_{L^2}^6\right).
\] 
\textbf{Case 2.} In the case $d>p_1$, one can obtain the result in a similar manner using the Sobolev embedding $W_0^{1,p_1}(\mathscr{D})\subset L^\frac{dp_1}{d-p_1}(\mathscr{D})$ and interpolation inequalities stated in \cite[Example 2.4 (2)]{brz14}.
\end{example}

\subsection*{Acknowledgements} 
The author is grateful to her PhD supervisor, Dr. David \v{S}i\v{s}ka,  for his useful comments and guidance during the preparation of this article.


\begin{thebibliography}{50}

\bibitem{brezis10}
H. Brezis, {\em Functional Analysis, Sobolev Spaces and Partial Differential Equations}, Springer, New York, 2010. 

\bibitem{brz14} 
Z. Brze\'{z}niak, W. Liu, J. Zhu, {\em Strong solutions for SPDE with locally monotone coefficients driven by L\'{e}vy noise}, Nonlinear Anal.-Real, 17 (2014), 283--310.



\bibitem{gyongy82}
I. Gy\"ongy, {\em On stochastic equations with respect to semimartingales III}, 
Stochastics 7 (1982), 231--254.  



\bibitem{gyongy17} I. Gy\"ongy, D.\v{S}i\v{s}ka, {\em It\^o Formula for Processes Taking Values in Intersection of Finitely Many Banach Spaces}, 
Stoch. Partial Differ. Equ. Anal. Comput. 5 (2017) , 428--455. 

\bibitem{gyongy80}
I. Gy\"ongy, N. V. Krylov, {\em On stochastic equations with respect to semimartingales I}, 
Stochastics 4 (1980), 1--21.  

\bibitem{krylov81}
N. V. Krylov, B. L. Rozovskii, {\em Stochastic Evolution Equations, Journal of Soviet Mathematics}, 14 (1981), 1233--1277.

\bibitem{lions69} J. L. Lions, {\em Quelques m\'ethodes de r\'esolution des probl\`emes aux limites non lin\'eaires}, Dunod Gauthier-Villars, Paris, 1969.


\bibitem{rockner10}   
W. Liu, M. R\"ockner, {\em SPDE in Hilbert Space with Locally Monotone Coefficients}, 
J. Funct. Anal., 259 (2010), 2902--2922.

\bibitem{neel16} 
Neelima, D. \v{S}i\v{s}ka, {\em Coercivity condition for higher order moments for nonlinear SPDEs and existence of solution under local monotonicity}, \url{	arXiv:1610.05700 [math.PR]}, (2016).

\bibitem{mikulevicius12}
R. Mikulevicius and H. Pragarauskas, {\em On $L_p$-estimates of some singular
integrals related to jump processes}, SIAM J. Math. Anal., 44(4) (2012), 2305--2328.

\bibitem{pardoux75} 
E. Pardoux,{ \em Equations aux d\'{e}riv\'{e}es partielles stochastiques non lineaires monotones. Etude des solutions forte de type Ito}, Th\'{e}se Doct. Sci. Math. Univ. Paris Sud., 1975.
  
%
 
\bibitem{yor91}
 D. Revuz, M. Yor, {\em Continuous Martingales and Brownian Motion}, 1st edition, Springer- Verlag, Berlin Heidelberg, 1991.

\bibitem{roubicek05} T. Roub\'i\v{c}ek, {\em Nonlinear Partial Differential Equations with Applications}, Birkh\"{a}user, Berlin, 2005.



\end{thebibliography}
\end{document}